  \theoremstyle{plain}
  \newtheorem{assumption}{Assumption}
\newcommand{\RR}{\mathbb{R}}
\newcommand{\TheTitle}{Analysis of the Extended Coupled-Cluster
  Method in Quantum Chemistry} 
\newcommand{\TheAuthors}{A. Laestadius, and S. Kvaal}
\title{{\TheTitle}\thanks{Submitted to the editors \today.
\funding{
	This work has received funding from the ERC-STG-2014 under grant
	agreement No 639508, and from the Research Council of Norway (RCN)
under CoE Grant No.~179568/V30 (CTCC).
}}}
\author{
  Andre Laestadius\thanks{Centre for Theoretical and Computational Chemistry, Department of Chemistry, University of Oslo, P.O.~Box 1033 Blindern, N-0315 Oslo, Norway
    (\email{andre.laestadius@kjemi.uio.no}).}
  \and
  Simen Kvaal\thanks{Centre for Theoretical and Computational Chemistry, Department of Chemistry, University of Oslo, P.O.~Box 1033 Blindern, N-0315 Oslo, Norway}
}
\begin{document}

\maketitle

% REQUIRED
\begin{abstract}
  The mathematical foundation of the so-called extended
  coupled-cluster method for the solution of the many-fermion
  Schr\"odinger equation is here developed. We prove an existence and
  uniqueness result, both in the full infinite-dimensional amplitude
  space as well as for discretized versions of it. The extended
  coupled-cluster method is formulated as a critical point of an
  energy function using a generalization of the Rayleigh--Ritz
  principle: the bivariational principle. This gives a quadratic bound
  for the energy error in the discretized case. The existence and
  uniqueness results are proved using a type of monotonicity property
  for the flipped gradient of the energy function. Comparisons to the
  analysis of the standard coupled-cluster method is made, and it is
  argued that the bivariational principle is a useful tool, both for
  studying coupled-cluster type methods, and for developing new
  computational schemes in general.
\end{abstract}

% REQUIRED
\begin{keywords}
  quantum chemistry; coupled-cluster method; extended coupled-cluster method; bivariational principle; uniqueness and existence; error estimates
\end{keywords}

% REQUIRED
\begin{AMS}
  65Z05, 81-08, 81V55
\end{AMS}

\section{Introduction}
The coupled-cluster (CC) method is today the \emph{de facto} standard
wavefunction-based method for electronic-structure calculations, and
has a complex and interesting history
\cite{Paldus2005,Kummel1991,Cizek1991,Bishop1991}. To cut a long story
short, it was invented by Coester and K\"ummel in the 1950s as a
method for dealing with the strong correlations inside an atomic
nucleus \cite{Coester1958,Coester1960}. From nuclear physics, the idea
migrated to the field of quantum chemistry in the 1960s due to the
seminal work of researchers such as Sinano\u{g}lu, \v{C}\'{i}\v{z}ek,
Paldus and Shavitt \cite{Sinanoglu1962,Cizek1966,Paldus1972}. An
interesting turn of events is that the method returned to nuclear
physics in the 1990s, when Dean and Hjorth--Jensen applied the now
mature methodology to nuclear structure calculations \cite{Dean2004a}.

The main feature of the CC method is the use of an exponential
parametrization of the wavefunction. This ensures proper scaling of
the computed energy with system size (number of particles), i.e., the
method is \emph{size extensive}. 
At the same time, the CC method is only \emph{polynomially scaling} with
respect to system size. These factors have led to the popularity of
the method.

However, the theory does not satisfy the (Rayleigh-Ritz) variational
principle, i.e., the computed CC energy is not guaranteed to be an
upper bound to the exact energy. This has traditionally been the
main criticism of CC calculations, as an error estimate is not readily
available. Furthermore, in the original
formulation it was not \emph{variational} in the sense that the
solution was not formulated as a stationary point of some
function(al).

Helgaker and J\o rgensen later formulated the CC method in terms of a
Lagrangian \cite{Helgaker1988,Helgaker1989}, viewing the solution of
the CC amplitude equations as a constrained optimization of the 
energy, the set of cluster amplitude equations becoming
constraints. This is today the standard formulation of the CC method.

Already in 1983, Arponen \cite{Arponen1983} derived the so-called extended CC
method (ECC) from a generalization of the Rayleigh--Ritz
variational principle, the \emph{bivariational principle}. This
principle formally relaxes the condition of the Hamiltonian being
symmetric, and thus introduces the left eigenvector as a variable as
well as the right eigenvector. Arponen noted that the standard CC
method can be viewed as an approximation to ECC, and continued to write down the
standard CC Lagrangian. In the bivariational interpretation, Helgaker
and J{\o}rgensen's Lagrange multipliers are actually wavefunction
parameters on equal footing with the cluster amplitudes. No distinction
is being made.

Both Helgaker and J{\o}rgensen's CC Lagrangian and Arponen's
bivariational formulation cast CC theory in a \emph{variational}
(stationary point) setting. However, only the bivariational point of
view allows, at least formally, systematic improvement by adding other
degrees of freedom than the cluster amplitudes to the ansatz. The
bivariational principle is therefore of potential great use when
developing novel wavefunction-based methods, see for example
Ref.~\cite{Kvaal2012}, where the single-particle functions are
introduced as (bi)variational parameters in a time-dependent
setting. However, while the bivariational principle is rigorous, it is
not known how to introduce \emph{approximations} by parameterizations
of the wavefunctions, such that one can obtain existence and
uniqueness results as well as error estimates.

In this article, we will provide a rigorous analysis of a version of
the ECC method. The idea is, starting from the bivariational quotient,
to choose a function $\mathcal{F}$ (see Eq.~\eqref{Feq}) that is
(locally and strongly) monotone and where $\mathcal{F}=0$ is
equivalent to a critical point of the bivariational quotient. Until
now, ECC has not been turned into a practical tool in chemistry due to
its complexity. On the other hand, the analysis herein is a step
towards obtaining a rigorous foundation for the application of the
bivariational principle. We believe that the approach taken, by
showing the monotonicity of the flipped gradient $\mathcal{F}$, is an
approach that may allow existence and uniqueness results in
much more general settings.

% We borrow many basic results from articles by ...
We build our analysis on articles by Rohwedder and
Schneider, who fairly recently put the standard CC method on sound
mathematical ground
\cite{Schneider2009,Rohwedder2013,Rohwedder2013b}. They proved, among
other important results, a uniqueness and existence result of the
solution of the CC amplitude equations. The result rests on a certain
monotonicity property of the CC equations. Moreover, in
Ref.~\cite{Rohwedder2013} the boundedness of cluster operators (as
operators on a Hilbert space that guarantees finite kinetic energy)
was established, which turns out to be a rather subtle matter. They
also provided error estimates for the energy using the stationarity
condition of the Lagrangian.

This article is structured as follows: In Section \ref{sec:exp-ansatz}
we discuss the solution of the Schr\"odinger equation by employing an
exponential ansatz. We here present relevant results needed for this
work. In particular Lemma \ref{lemma:ECCpara} is the motivation for
our choice of ECC variables and links the ECC energy function to the
bivariational principle. Theorem \ref{thm:contECC} formulates the
continuous ECC equations and equates the solution of these equations
with the solution of the Schr\"odinger equation.

In Section \ref{class} we analyze the flipped gradient of the
ECC energy function and prove strong and local monotonicity for this
entity. This is achieved for two complementary situations. Theorem
\ref{thm:taylormono} proves this property under assumptions on the
structure of the solution, whereas Theorem \ref{thm:splittmono} under
assumptions on the Hamiltonian. Along the lines of the analysis of
Rohwedder and Schneider for the CC theory, we prove existence and
uniqueness for the solution of the (continuous) ECC equation and
truncated (discrete) versions of it, see Theorem \ref{UandE}.  This
theorem also guarantees convergence towards the full solution as the
truncated amplitude spaces tend to the continuous ones. Theorem
\ref{ThmEst} formulates a sufficient condition for the truncated
amplitude spaces to grant a unique solution of the discrete ECC
equation. Again the monotonicity is used for the flipped
gradient. Lastly, in Theorem \ref{thm:Eest} we obtain error
estimates for the truncated ECC energy.  The energy estimates are
obtained without the use of a Lagrangian and are instead based on the
bivariational formulation of the theory.

\section{Solving the Schr\"odinger equation using the exponential ansatz}
\label{sec:exp-ansatz}

\subsection{Traditional CC theory in a rigorous manner}
\label{sec:tcc}

In this section we consider the exponential parametrization for the
$N$-electron ground-state wavefunction $\psi_*$ satisfying the $N$-electron
Schr\"odinger equation (SE)
\begin{equation*}
H\psi_* = E_*\psi_*.
\end{equation*}
Here, $E_*$ is the
ground-state energy and $ H$ is the Hamiltonian of a molecule in the
Born--Oppenheimer approximation. We assume that $\psi_*$ exists and that
it is non-degenerate, and we denote by $\gamma_* > 0$ the spectral
gap (for definition see Section \ref{sec:aap}).

The set of admissible wavefunctions is a Hilbert space
$\mathcal{H} \subset\mathcal L_N^2$ of finite kinetic energy wavefunctions, with
norm $\|\psi\|_{\mathcal{H}}^2 = \|\psi\|^2 + \|\nabla\psi\|^2$. Here,
$\mathcal L_N^2$ is the space of totally antisymmetric square-integrable
functions $\psi : (\RR^{3}\times \{\uparrow,\downarrow\})^N \to
\RR$, with norm $\|\cdot\|$ and inner product $\braket{\cdot,\cdot\cdot}$. In this work, we restrict our attention to the \emph{real} space
$\mathcal L^2_N$, and thus real Hamiltonians.

We will furthermore assume that that the ground-state wavefunction $\psi_*$ is non-orthogonal to a (fixed)
reference determinantal wavefunction $\phi_0 \in \mathcal{H}$, and
thus, using intermediate normalization, we have
$\psi_* = \phi_0 + \psi_\perp$, where
$\braket{\phi_0,\psi_\perp} = 0$.

The molecular Hamiltonian has a set of useful properties that make
the SE well-posed~\cite{Yserentant2010}. The operator $ H :
\mathcal{H}\to\mathcal{H}'$ is a
bounded (continuous) operator into the dual $\mathcal{H}'$, i.e.,
there exists a constant $C \geq 0$ such that for all $\psi,\psi'\in
\mathcal{H}$,
\begin{subequations}\label{eqs:hamiltonian}
	\begin{equation}
	|\braket{\psi', H\psi}| \leq C \|\psi'\|_{\mathcal{H}}
	\|\psi\|_{\mathcal{H}}. \label{eq:bounded-operator}
	\end{equation}
	Moreover, $H$ is below bounded by a constant $e \in \RR$ such that $ H + e$ is
	$\mathcal{H}$-coercive, i.e., there exists a constant $c \geq 0$ such
	that for all $\psi \in \mathcal{H}$,
	\begin{equation}
	\braket{\psi,( H + e)\psi} \geq c \|\psi\|_{\mathcal{H}}^2.
	\label{eq:coercive-operator}
	\end{equation}
	The latter inequality is often referred to as a G{\aa}rding
	estimate and it is immediate that $e>-E_*$. Finally, $H$ is symmetric,
	\begin{equation}
	\braket{\psi,H\psi'} = \braket{\psi', H\psi}.
	\label{eq:symmetric-operator}
	\end{equation}
\end{subequations}
Equations~(\ref{eq:bounded-operator}--\ref{eq:symmetric-operator})
form assumptions on $ H$ that will be used frequently.

In a standard fashion, we introduce a basis for $\mathcal{H}$ of determinantal wavefunctions
built from the $N$ ``occupied'' functions $\chi_i$ (forming  $\phi_0$)
as well as ``virtual'' functions $\chi_a$,
$a=N+1,N+2,\cdots$. Assuming that $\{\chi_p : p=1,2,\cdots\}$ is an
$\mathcal L_1^2$-orthonormal basis, the corresponding determinantal basis
$\{\phi_\mu\}$ is $ \mathcal L_N^2$-orthonormal. Additionally, we must require $\|\nabla\chi_p\|<+\infty$.

Each $\phi_\mu$ can be written on the form $\phi_\mu = X_\mu \phi_0$,
where $ X_\mu$ is an operator that creates up to $N$ particle-hole
pairs, i.e., $ \{X_\mu \}_{\mu\neq 0}$ are excitation operators, and for an arbitrary $\psi \in\mathcal H$ with $\langle \phi_0,\psi\rangle =1$ we have
\begin{equation*}
\psi = \phi_0 + \sum_{\mu\neq 0} c_\mu \phi_\mu = ( I +
C)\phi_0,
\end{equation*}
with  $ C = \sum_{\mu \neq 0} c_\mu  X_\mu$ being a
\emph{cluster operator}. The sequence $c=\{c_\mu\}_{\mu\neq 0}$
consists of the 
corresponding \emph{cluster amplitudes}. 
One says that $\phi_0$ spans the ``reference space'' $\mathcal{P}:= \text{span} \{ \phi_0 \}$, while $\{\phi_\mu\}_{\mu\neq 0}$ forms a basis for
$\mathcal{Q} = \mathcal{P}^\bot$, the ``excluded space''. It is clear that $\mathcal P \oplus \mathcal Q = \mathcal H$. (Here $\mathcal P^\perp$ denotes the $\mathcal L_N^2$ orthogonal complement of $\mathcal P$, i.e., with respect to the inner product $\langle \cdot,\cdot\cdot \rangle$.)

We introduce the convention that to each cluster amplitude sequence 
$c=\{c_\mu\}_{\mu\neq 0}$,
$t=\{t_\mu\}_{\mu\neq 0}$, etc, the corresponding \emph{cluster operator}
is denoted by the capital letter, i.e., $ C = \sum_\mu c_\mu  X_\mu$,
$ T = \sum_\mu t_\mu  X_\mu$, etc. Cluster operators by definition
excludes $\mu = 0$, so unless otherwise specified, in the sequel, all sums over $\mu$ runs over excited
determinants only. Moreover, we group the excitations according to the number of
``particle-hole pairs'' they create, i.e., $ T
=  T_1 +  T_2 + \cdots +  T_N$, etc.

We follow Ref.~\cite{Rohwedder2013b} and introduce a Banach space of
cluster amplitudes (in fact it is a Hilbert space). We say that
$t \in \mathcal{V}$ if and only if
$\|t\|_{\mathcal{V}} := \| T\phi_0\|_{\mathcal{H}} < +\infty$. Thus,
$t \in \mathcal{V}$ if and only if $\{t_\mu\}$ are the amplitudes of a
wavefunction of finite kinetic energy in the excluded space, i.e.,
$T \phi_0 \in \mathcal Q$. We
remark that the space of cluster operators corresponding to amplitudes
from $\mathcal{V}$ only depends on the choice of the reference
$\phi_0$ (i.e. the space $\mathcal P$), and not on the choice of the
virtual orbitals $\{\chi_a \}$, as long as $\{\phi_\mu\}$
is an orthonormal basis of $\mathcal Q$.

If the Hilbert space was finite dimensional, every linear operator
would be bound\-ed,
and the exponential map $ T \mapsto e^{{T}}$ would always be well-defined.
A cornerstone of formal CC theory is therefore the
well-definedness of the exponential map for general Hilbert spaces and
cluster operators (see
Lemma 2.3 in \cite{Rohwedder2013b}):
\begin{theorem}[Rohwedder and Schneider, the exponential mapping]
	$ T$ and $ T^\dagger$ are bounded operators on $\mathcal H$ if and
	only if $t\in\mathcal V$. Moreover, the exponential map
	$ T \mapsto e^{ T}$ is a (Fr\'echet) $\mathcal{C}^\infty$ isomorphism between
	$\mathcal C := \{  T: t\in \mathcal V \}$ and
	$\mathcal C_0 := \{  I+  T: t\in\mathcal V \}$. For
	$\psi\in\mathcal H$ such that $\langle \phi_0,\psi\rangle=1$ there
	exists a unique $t\in\mathcal V$ such that $\psi = e^{ T} \phi_0$,
	depending smoothly on $\psi$. In particular the exponential map and its inverse are locally Lipschitz, i.e., for $s,t\in\mathcal V$ inside some ball, there exist constants $D,D'$ such that
	\begin{equation}
	\Vert s -t \Vert_{\mathcal V} \leq D \Vert e^S\phi_0 -e^T \phi_0 \Vert_{\mathcal H} \leq D'  \Vert s -t \Vert_{\mathcal V}.
	\label{ExpMap1}
	\end{equation}
	\label{Thm1RS}
\end{theorem}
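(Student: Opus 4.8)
The plan is to establish the four assertions in sequence, reducing everything to the algebraic fact that cluster operators are \emph{nilpotent}: since every excitation term annihilates at least one occupied orbital and there are only $N$ of them, any product $X_{\mu_1}\cdots X_{\mu_k}$ creating more than $N$ particle--hole pairs vanishes, whence $T^{k}=0$ for all $k\geq N+1$. Consequently the exponential collapses to the finite sum $e^{T}=\sum_{k=0}^{N}T^{k}/k!$ and its inverse to $\log(I+S)=\sum_{k=1}^{N}(-1)^{k-1}S^{k}/k$, so the analytic subtleties of infinite series never arise and both $T\mapsto e^{T}$ and its inverse are \emph{polynomial} in the operator. The only genuinely hard input is the equivalence between membership in $\mathcal V$ and $\mathcal H$-boundedness, which I treat first.

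First I would prove the boundedness equivalence, which I expect to be the crux of the whole theorem. The direction $t\in\mathcal V\Rightarrow T,T^\dagger$ bounded is the difficult one; the converse is immediate, since if $T$ is bounded then $\|t\|_{\mathcal V}=\|T\phi_0\|_{\mathcal H}\leq\|T\|_{\mathcal H\to\mathcal H}\,\|\phi_0\|_{\mathcal H}<+\infty$, so $t\in\mathcal V$. For the hard direction the obstacle is that $t\in\mathcal V$ only controls the action of $T$ on the single reference $\phi_0$ in the $\mathcal H$-norm, whereas boundedness requires controlling $T\psi$ for \emph{every} $\psi\in\mathcal H$; moreover the $\mathcal H$-norm weights the amplitudes by the kinetic energy, so a bare $L^2$ combinatorial bound does not suffice. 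I would pass to second quantization, decompose $T=T_1+\cdots+T_N$ into its $k$-body pieces, and estimate each piece against $\|\cdot\|_{\mathcal H}$ using that $\|\nabla\chi_p\|<+\infty$ and that $T\phi_0$ already has finite kinetic energy, the antisymmetry and the particle-number truncation keeping the combinatorial factors finite. This is exactly the delicate estimate carried out in Ref.~\cite{Rohwedder2013}, which I would invoke.

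Granted boundedness, the second and third assertions are then largely formal. Because cluster operators form a commutative, nilpotent algebra closed under products, $e^{T}-I$ is again a cluster operator, so $e^{T}\in\mathcal C_0$ and $T\mapsto e^{T}$ maps $\mathcal C$ into $\mathcal C_0$, while the logarithm furnishes a two-sided inverse from $\mathcal C_0$ back into $\mathcal C$. Read off in amplitudes, both maps are finite polynomials in $t$ with continuous multilinear coefficients (operator multiplication being bounded and bilinear), hence $\mathcal{C}^\infty$, giving the claimed Fréchet isomorphism. The existence and uniqueness statement follows at once: for $\psi=\phi_0+\psi_\perp$ with $\langle\phi_0,\psi\rangle=1$, the equation $e^{T}\phi_0=\psi$ is equivalent to $(e^{T}-I)\phi_0=\psi_\perp$; the amplitudes of the cluster operator $e^{T}-I$ are uniquely fixed as the coefficients of $\psi_\perp\in\mathcal Q$, which lie in $\mathcal V$ precisely because $\psi_\perp$ has finite kinetic energy, and then $t=\log(e^{T})$ is the unique preimage, depending smoothly on $\psi$ through its linear dependence on $\psi_\perp$.

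Finally, the local Lipschitz estimate \eqref{ExpMap1} is a consequence of the $\mathcal{C}^\infty$ (in particular $\mathcal{C}^1$) property just established: on any ball in $\mathcal V$ the Fréchet derivatives of $t\mapsto e^{T}\phi_0$ and of its inverse are bounded, so the mean value inequality yields the two bounds, with $D'$ the supremum over the ball of the norm of the derivative of the exponential map and $D$ that of its inverse. In summary, the entire theorem rests on the single nontrivial input — the $\mathcal H$-boundedness of cluster operators for $t\in\mathcal V$ — with all remaining steps reducing to the algebra of a nilpotent, commutative operator family.
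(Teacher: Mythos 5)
Your proposal is correct and consistent with how the paper treats this result: the paper states Theorem~\ref{Thm1RS} \emph{without proof}, importing it as Lemma~2.3 of \cite{Rohwedder2013b}, and your reconstruction --- the trivial converse direction, the nilpotency $T^{N+1}=0$ reducing $e^{T}$ and $\log(I+S)$ to polynomials so that the isomorphism and smoothness claims become algebra, and the mean value inequality yielding \eqref{ExpMap1} --- matches the structure of the cited argument. In particular, you correctly isolate the $\mathcal H$-boundedness of $T$ and $T^\dagger$ for $t\in\mathcal V$ as the sole hard analytic input and delegate it to \cite{Rohwedder2013,Rohwedder2013b}, exactly as the paper itself does.
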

\begin{remark}
	Note that the above theorem does not hold for a general subspace (truncation) $\mathcal V_d \subset \mathcal V$. To see this, let $\{\chi_p \}$ be an orthonormal set but not necessarily a (complete) basis and consider a subset $\mathcal V_d$ corresponding to only single excitations ($T =T_1$, $S=S_1$ etc.) and assume $N>1$. Then the relation $e^{T} = I + S$ implies $T_1 + T_1^2/2 + \dots +T_1^N/N! = S_1$. Thus, we can choose $T_1\neq 0$ such that $e^{T_1}\neq I + S_1$, for any single excitation $S_1$.   
	\label{RmkThm1RS}
\end{remark}
The CC ansatz uses that the exponential is a bijection
between the sets $\mathcal C$ and $\mathcal C_0$, such that $\psi_* =
e^{ T_*}\phi_0$ for some $ T_*$ satisfying $e^{ T_*}=  I+ C_*$. 
We then have (see Theorem 5.3 in \cite{Rohwedder2013}):
\begin{theorem}[Rohwedder and Schneider, continuous CC formulation]\label{thm:ccc}
	Under the assumptions on $ H$ stated in
	Eqs.~\eqref{eq:bounded-operator} and \eqref{eq:coercive-operator},
	$\psi_*= e^{ T_*}\phi_0$ solves $ H\psi_* = E_* \psi_*$ if and only if 
	\begin{equation}
	f(t_*) = 0, \quad \text{and} \quad E_\text{CC}(t_*) = E_*, \label{eq:cont-cc-eqs}
	\end{equation}
	where $f : \mathcal{V} \to \mathcal{V}'$ is given by
	\begin{align*}
	f_\mu(t) := \langle \phi_\mu, e^{- T}  H e^{ T}\phi_0 \rangle, 
	\end{align*}
	and where $E_{\text{CC}} : \mathcal{V} \to \RR$ is given by
	\begin{align*}
	E_\text{CC}(t) := \langle \phi_0, e^{- T}  H e^{ T}\phi_0 \rangle .
	\end{align*}
\end{theorem}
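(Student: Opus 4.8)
The plan is to read the coupled-cluster equations as the coordinate representation, in the orthonormal determinantal basis $\{\phi_0\}\cup\{\phi_\mu\}_{\mu\neq0}$, of a single \emph{similarity-transformed} Schr\"odinger equation. Introduce the (non-symmetric) transformed Hamiltonian $\tilde H_{T}:=e^{-T}He^{T}$, understood weakly through the dual pairing $\langle\psi',\tilde H_T\psi\rangle=\langle e^{-T^\dagger}\psi',He^{T}\psi\rangle$. This bilinear form is well-defined on $\mathcal H\times\mathcal H$: by Theorem~\ref{Thm1RS} both $T$ and $T^\dagger$ are bounded on $\mathcal H$, so $e^{T}\psi$ and $e^{-T^\dagger}\psi'$ lie in $\mathcal H$, while $H:\mathcal H\to\mathcal H'$ is bounded by \eqref{eq:bounded-operator}. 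In particular $f_\mu(t)=\langle\phi_\mu,\tilde H_T\phi_0\rangle$ and $E_{\mathrm{CC}}(t)=\langle\phi_0,\tilde H_T\phi_0\rangle$ are precisely the expansion coefficients of the functional $\tilde H_T\phi_0\in\mathcal H'$ against this basis. Observe also that, since every cluster operator creates only excitations out of $\mathcal P$, we have $e^{T_*}\phi_0=\phi_0+(\text{terms in }\mathcal Q)$, so $\psi_*=e^{T_*}\phi_0$ automatically satisfies intermediate normalization $\langle\phi_0,\psi_*\rangle=1$ and $e^{-T_*}\psi_*=\phi_0$.

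For the forward implication, suppose $H\psi_*=E_*\psi_*$ holds weakly in $\mathcal H'$. Testing against $e^{-T^\dagger}\psi'$ for arbitrary $\psi'\in\mathcal H$ and using $e^{-T_*}\psi_*=\phi_0$ shows this is equivalent to $\tilde H_{T_*}\phi_0=E_*\phi_0$ as an identity in $\mathcal H'$. Pairing this identity against $\phi_0$ and against each $\phi_\mu$ with $\mu\neq0$, and invoking orthonormality of the determinantal basis, yields $E_{\mathrm{CC}}(t_*)=\langle\phi_0,\tilde H_{T_*}\phi_0\rangle=E_*\langle\phi_0,\phi_0\rangle=E_*$ together with $f_\mu(t_*)=\langle\phi_\mu,\tilde H_{T_*}\phi_0\rangle=E_*\langle\phi_\mu,\phi_0\rangle=0$, which is exactly \eqref{eq:cont-cc-eqs}.

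For the converse, assume $f(t_*)=0$ and $E_{\mathrm{CC}}(t_*)=E_*$. Then the functional $g:=\tilde H_{T_*}\phi_0-E_*\phi_0\in\mathcal H'$ annihilates $\phi_0$ and every $\phi_\mu$; since $\{\phi_0\}\cup\{\phi_\mu\}_{\mu\neq0}$ is a complete orthonormal system whose span is dense in $\mathcal H$, continuity of $g$ forces $g=0$, i.e. $\tilde H_{T_*}\phi_0=E_*\phi_0$, equivalently $e^{-T_*}H\psi_*=E_*\phi_0$ in $\mathcal H'$. It then remains to undo the similarity transform and recover $H\psi_*=E_*e^{T_*}\phi_0=E_*\psi_*$.

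The routine bookkeeping aside, the one step requiring genuine care is this last one: justifying that the formal left-multiplication by $e^{T_*}$ is legitimate at the level of $\mathcal H'$ and not merely on $\mathcal H$. This is precisely where the boundedness of \emph{both} $T_*$ and $T_*^\dagger$ — the nontrivial content of Theorem~\ref{Thm1RS} — is indispensable: it guarantees that $e^{\pm T_*}$ are bounded isomorphisms of $\mathcal H$, so that their transposes are bounded isomorphisms of $\mathcal H'$ with $(e^{-T_*})'(e^{T_*})'=(e^{T_*}e^{-T_*})'=I$. Applying $(e^{T_*})'$ (equivalently, the Hilbert-adjoint action of $e^{T_*}$) to the weak identity $e^{-T_*}H\psi_*=E_*\phi_0$ propagates it back to the genuine eigenvalue equation $H\psi_*=E_*\psi_*$ without ever leaving the functional-analytic framework, using that $(e^{T_*^\dagger})'\phi_0$ is identified with $e^{T_*}\phi_0=\psi_*$.
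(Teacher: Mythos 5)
Your proof is correct and follows essentially the same route as the source the paper cites for this statement (Theorem 5.3 of Ref.~\cite{Rohwedder2013}; the paper itself states the theorem without proof): using that $e^{\pm T_*}$ and $e^{\pm T_*^\dagger}$ are bounded isomorphisms of $\mathcal H$ by Theorem~\ref{Thm1RS}, the weak eigenvalue problem $H\psi_*=E_*\psi_*$ is equivalent to $e^{-T_*}He^{T_*}\phi_0=E_*\phi_0$ in $\mathcal H'$, and testing against the dense span of the determinantal basis splits this into the $\mathcal P$-component (the energy equation) and the $\mathcal Q$-components (the amplitude equations). Your treatment of the one delicate step --- transporting the identity back through the similarity transform at the level of $\mathcal H'$, which requires boundedness of \emph{both} $T_*$ and $T_*^\dagger$ --- is exactly the point where the cited argument also leans on the exponential-mapping theorem, so there is no substantive difference in approach.
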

\begin{remark}
	(i) Equation~\eqref{eq:cont-cc-eqs} is the usual \emph{untruncated} amplitude and
	energy equations of CC theory, formulated in the infinite dimensional
	case, with $f: \mathcal{V} \to \mathcal{V}'$. 
	This formulation was derived and named the continuous CC 
	method in Ref.~\cite{Rohwedder2013}, being a mathematically rigorous formulation of
	the electronic SE using the exponential ansatz. Continuous here means that the excluded space $\mathcal Q$ is not discretized.
	
	(ii) A remark on a frequently used notation in this article is in
	place. Since $f(t)$ is an element of the dual space of $\mathcal V$, $f(t) \in \mathcal{V}'$, the pairing with
	any $s\in \mathcal{V}$ is continuous in $s$ and given by the infinite
	series $\braket{f(t),s} = \sum_\mu s_\mu
	f_\mu(t)$. It should be clear from context whether $\langle \cdot,\cdot\cdot\rangle$ refers to the $\mathcal L_N^2$ inner product or the just stated infinite series. 
\end{remark}

Even if Theorem~\ref{thm:ccc} reformulates the SE, it is not clear
that truncations of $ T$, either with respect to basis set or
excitation level (or both), will give discretizations that yield existence and
uniqueness of solutions as well as error estimates. The main tool here
is the concept of local strong monotonicity of
$f : \mathcal{V} \to \mathcal{V}'$. The following theorem is basically
a local application of a classical theorem by Zarantonello
\cite{Zarantonello1960}, see also Theorem~4.1 in
Ref.~\cite{Rohwedder2013b} and Theorem 25.B and Corollary 25.7 in
\cite{Zeidler1990}. We will have great use of this result when
studying the extended CC method of Arponen. Let $X$ be a Hilbert space
and define for a subspace $Y\subset X$ and $x\in X$ the distance
$d(Y,x)$ between $Y$ and $x$ by
\[
d(Y,x) := \inf_{y\in Y} \Vert y -x \Vert_X .
\]
We recall that if $Y$ is closed then there exists a minimizer $y_m$,
i.e., $d(Y,x)  = \Vert y_m -x \Vert_X $. This minimizer is the
orthogonal projection of $x$ onto $Y$. We now state without proof:
\begin{theorem}[Local version of Zarantonello's Theorem]
	\label{fCCthm}\label{thm:zarantonello}
	Let $f : X \to X'$ be a map between a Hilbert space $X$ and its dual
	$X'$, and let $x_*\in B_\delta$ be
	a root, $f(x_*)=0$, where $B_\delta$ is an open ball of
	radius $\delta$ around $x_*$.
	
	Assume that $f$ is Lipschitz continuous in $B_\delta$,  i.e.,
	that for all $x_1,x_2\in B_\delta$,
	\begin{equation*}
	\|f(x_1) - f(x_2)\|_{X'} \leq L \|x_1-x_2\|_{X},
	\end{equation*}
	for a constant $L$. Secondly, assume that $f$ is locally strongly
	monotone in $B_\delta$, i.e., that 
	\begin{equation*}
	\braket{f(x_1) - f(x_2), x_1-x_2} \geq \gamma \|x_1-x_2\|_X^2,
	\qquad \text{for all}\; x_1,x_2\in B_\delta,
	\end{equation*}
	for some constant $\gamma > 0$.

	Then, the following holds:
	\begin{enumerate}
		\item[1)]
		The root $x_*$ is unique in $B_\delta$. Indeed, there is
		a ball $C_\varepsilon\subset X'$ with $0\in C_\varepsilon$ such that the 
		solution map
		$f^{-1}: C_\varepsilon \to X$ exists and is Lipschitz continuous,
		implying that the equation
		\begin{equation*}
		f(x_* + \Delta x) = y
		\end{equation*}
		has a unique solution $\Delta x = f^{-1}(y) - x_*$,
		depending continuously on $y$, with norm $\|\Delta x\|_X \leq \delta$. 
		\item[2)]
		Moreover, let $X_d \subset X$ be a closed subspace 
		such that $x_*$ can be approximated sufficiently well, i.e., the distance
		$d(x_*,X_d)$ is small. Then, the projected problem
		$ f_d(x_d) = 0$ has a unique solution $x_d \in X_d \cap
		B_\delta$, and
		\begin{equation*}
		\|x_* - x_d\|_X \leq \frac{L}{\gamma} d(x_*,X_d).
		\end{equation*}
	\end{enumerate}
\end{theorem}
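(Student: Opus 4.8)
The plan is to use the Hilbert-space structure of $X$ and reduce both parts to Banach's fixed-point theorem combined with the strong-monotonicity inequality. Uniqueness in part 1) is immediate: if $f(x_1)=f(x_2)=0$ with $x_1,x_2\in B_\delta$, then by assumption $0=\braket{f(x_1)-f(x_2),x_1-x_2}\geq\gamma\|x_1-x_2\|_X^2$, forcing $x_1=x_2$. For solvability I would pass to $X$ through the Riesz isomorphism $J:X'\to X$ (an isometry) and, for a fixed right-hand side $y\in X'$, study the iteration map $G_\rho(x):=x-\rho J\big(f(x)-y\big)$, whose fixed points are exactly the solutions of $f(x)=y$. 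Note that monotonicity and Lipschitz continuity together give $\gamma\leq L$, which we will need below.

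The key computation is the contraction estimate. Expanding $\|G_\rho(x_1)-G_\rho(x_2)\|_X^2$ and inserting $\braket{f(x_1)-f(x_2),x_1-x_2}\geq\gamma\|x_1-x_2\|_X^2$ together with $\|J(f(x_1)-f(x_2))\|_X=\|f(x_1)-f(x_2)\|_{X'}\leq L\|x_1-x_2\|_X$ yields
\[
\|G_\rho(x_1)-G_\rho(x_2)\|_X^2\leq\big(1-2\rho\gamma+\rho^2 L^2\big)\|x_1-x_2\|_X^2 .
\]
Choosing $\rho=\gamma/L^2$ makes the factor equal to $q^2:=1-\gamma^2/L^2\in[0,1)$, so $G_\rho$ is a contraction. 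Since $G_\rho(x_*)-x_*=\rho Jy$ has norm $\rho\|y\|_{X'}$, for $y$ in a sufficiently small ball $C_\varepsilon$ one checks that $G_\rho$ maps a closed sub-ball $\bar B_r(x_*)\subset B_\delta$ into itself, the condition being $\rho\varepsilon\leq(1-q)r$. Banach's theorem then produces the unique fixed point $x_*+\Delta x$ with $\|\Delta x\|_X\leq r\leq\delta$, and continuous dependence follows from monotonicity alone: if $f(x_i)=y_i$ then $\gamma\|x_1-x_2\|_X^2\leq\braket{y_1-y_2,x_1-x_2}\leq\|y_1-y_2\|_{X'}\|x_1-x_2\|_X$, so $f^{-1}$ is Lipschitz with constant $\gamma^{-1}$.

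For part 2) I would introduce the Galerkin restriction $f_d:X_d\to X_d'$ defined by $\braket{f_d(x_d),v_d}:=\braket{f(x_d),v_d}$ for $v_d\in X_d$; it inherits Lipschitz continuity with constant $L$ and strong monotonicity with constant $\gamma$ on $B_\delta\cap X_d$. Writing $P_d$ for the orthogonal projection onto $X_d$, so that $\|x_*-P_d x_*\|_X=d(x_*,X_d)$, the residual at the projected point obeys $\|f_d(P_d x_*)\|_{X_d'}\leq\|f(P_d x_*)-f(x_*)\|_{X'}\leq L\,d(x_*,X_d)$. Running the same contraction argument for $f_d$, now centered at $P_d x_*$, the identical constant $q<1$ produces a unique root $x_d\in X_d\cap B_\delta$ as soon as $d(x_*,X_d)$ is small enough. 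The error bound then combines strong monotonicity with the Galerkin orthogonality $\braket{f(x_d),v_d}=0$, $v_d\in X_d$: using $f(x_*)=0$ and $P_d x_*-x_d\in X_d$ to discard the $X_d$-component of $x_*-x_d$,
\[
\gamma\|x_*-x_d\|_X^2\leq\braket{f(x_*)-f(x_d),x_*-x_d}=\braket{f(x_*)-f(x_d),x_*-P_d x_*}\leq L\,\|x_*-x_d\|_X\,d(x_*,X_d),
\]
whence $\|x_*-x_d\|_X\leq(L/\gamma)\,d(x_*,X_d)$.

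The algebraic ingredients above are routine; the genuine subtlety is the \emph{local} character of the hypotheses. Were $f$ monotone and Lipschitz on all of $X$, the classical (global) Zarantonello theorem would apply at once. Here the hard part will be the bookkeeping of radii needed to keep each fixed-point iteration inside $B_\delta$, where the two structural estimates are valid — this is precisely what forces the smallness conditions on $\varepsilon$ in part 1) and on $d(x_*,X_d)$ in part 2) — and, relatedly, verifying that the discrete root $x_d$ actually lands in $B_\delta$ so that monotonicity and the Lipschitz bound may be invoked there.
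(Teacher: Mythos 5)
Your proposal is correct, and since the paper explicitly states this theorem \emph{without} proof (deferring to Zarantonello's classical argument, Theorem 25.B in Zeidler, and Theorem 4.1 of Rohwedder--Schneider), there is no in-paper proof to diverge from: your argument is precisely the canonical one those references contain, namely the contraction $x \mapsto x - \rho J(f(x)-y)$ with $\rho = \gamma/L^2$ via the Riesz map, plus Galerkin quasi-optimality for the projected problem. You also correctly identify and handle the only genuinely local subtleties --- keeping the iterates inside $B_\delta$ and verifying $x_d \in B_\delta$ before invoking monotonicity in the error bound --- so nothing is missing.
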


Rohwedder and Schneider proved under certain assumptions (see
Theorem 3.4 and 3.7 and Assumptions A and B in \cite{Rohwedder2013b}) that 
the amplitude equations $f : \mathcal{V}\to\mathcal{V}'$ are indeed
locally strongly monotone. (Lipschitz continuity follows from the
differentiability of $f$.) Thus, the second part of
Theorem~\ref{thm:zarantonello} then guarantees that the truncated CC
equations have a unique solution, and that the error tends to zero as
we increase the basis size and the truncation level of $ T$, if the
amplitude equation map $f$ is locally strongly monotone and Lipschitz
continuous.

Before addressing the extended CC method we follow
Helgaker and J{\o}rgensen \cite{Helgaker1988} and remark that one can view the CC method as minimization of $E_\text{CC}(t)$ over $\mathcal V$ under the constraint $f(t)=0$. The Lagrangian in this case becomes
\begin{equation}
\label{Lagrange}
\begin{split}
\mathcal L(t,s) &:= \langle \phi_0,e^{- T}  H e^{ T}\phi_0\rangle +
\sum_{\mu} s_\mu \langle  \phi_\mu,e^{- T}  H
e^{ T}\phi_0\rangle \\
& = \braket{\phi_0,( I +  S^\dag)e^{- T} H e^{ T}\phi_0},
\end{split}
\end{equation}
where $s = (s_\mu )_{\mu \neq 0} \in \mathcal{V}$ is the
multiplier, which can be gathered into an excitation operator
$ S = \sum_\mu s_\mu X_\mu$. Note that $D_{s_\mu} \mathcal L = f_\mu$
since
$\mathcal L(t,s) = E_\text{CC}(t) + \langle f(t),s\rangle$.  We shall in the next section see that the
Lagrangian formulation is contained in the bivariational formulation
of CC theory.

\subsection{The extended coupled-cluster method}

To link the forthcoming discussion to the previous section, we note that Arponen \cite{Arponen1983} derived the CC Lagrangian 
starting from the \emph{bivariational Rayleigh-Ritz quotient}
$\mathcal{E}_{\text{bivar}} : \mathcal{H}\times\mathcal{H}\to\mathbb{R}$,
\[
\mathcal{E}_{\text{bivar}}(\psi,\psi') :=  \frac{\langle \psi', H\psi \rangle}{\langle\psi',\psi\rangle}.
\]
Vis-\'a-vis the usual Rayleigh-Ritz quotient, $\psi$ and $\psi'$ are
here truly independent variables (not only treated as such in a formal
manner). (See also the discussion following Eq. (24) in \cite{Kvaal2013}.) 
The stationary condition $D\mathcal E_\text{bivar}=0$ yields the left
(and right) eigenvector(s) of $H$ with eigenvalue $E_*$, in fact, by
straight-forward differentiation we obtain the following result:
\begin{theorem}[Bivariational principle]\label{bivar}
	Let $ H : \mathcal{H}\to\mathcal{H}'$ be  a bounded operator. Then,
	$\mathcal{E}_\text{bivar}$ is an infinitely differentiable function
	at all points where $\braket{\psi',\psi}\neq 0$, and
	$D_{\psi}\mathcal{E}_\text{bivar} =
	D_{\psi'}\mathcal{E}_\text{bivar} = 0$ if and only if the left and
	right SE is satisfied,
	\[  H\psi = E\psi, \quad  H^\dag \psi' = E \psi', \quad \langle \psi',\psi \rangle
	\neq 0.\]
	Here, $H^\dag : \mathcal{H}\to \mathcal{H}'$ is defined by
	$\braket{H^\dag\psi',\psi} := \braket{\psi',H\psi}$.
\end{theorem}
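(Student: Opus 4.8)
The plan is to exploit that $\mathcal{E}_\text{bivar}$ is the quotient of two bounded bilinear forms. Define $n(\psi,\psi') := \braket{\psi',H\psi}$ and $d(\psi,\psi') := \braket{\psi',\psi}$. By the boundedness of $H$ (Eq.~\eqref{eq:bounded-operator}) together with continuity of the inner product, both $n$ and $d$ are continuous bilinear maps $\mathcal{H}\times\mathcal{H}\to\RR$, hence $\mathcal{C}^\infty$ as functions of the pair $(\psi,\psi')$. Since real division is $\mathcal{C}^\infty$ on $\RR\setminus\{0\}$ and composition of smooth maps is smooth, $\mathcal{E}_\text{bivar} = n/d$ is $\mathcal{C}^\infty$ on the open set $\{d\neq 0\}$. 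This settles the differentiability claim.

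Next I would compute the two partial Fr\'echet derivatives by the quotient rule. For a direction $\eta\in\mathcal{H}$,
\[
D_\psi\mathcal{E}_\text{bivar}[\eta] = \frac{\braket{\psi',H\eta}\,d(\psi,\psi') - n(\psi,\psi')\,\braket{\psi',\eta}}{d(\psi,\psi')^2}.
\]
Writing $E := \mathcal{E}_\text{bivar}(\psi,\psi') = n/d$ for the value of the quotient at the point, $D_\psi\mathcal{E}_\text{bivar}=0$ is equivalent to $\braket{\psi',H\eta} = E\braket{\psi',\eta}$ for all $\eta$, i.e.\ $\braket{\psi',(H-E)\eta}=0$. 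Invoking the definition $\braket{H^\dag\psi',\eta}:=\braket{\psi',H\eta}$, this reads $\braket{(H^\dag-E)\psi',\eta}=0$ for every $\eta\in\mathcal{H}$. Since $(H^\dag - E)\psi'$ is an element of $\mathcal{H}'$ that annihilates all of $\mathcal{H}$, it must vanish, giving the left equation $H^\dag\psi' = E\psi'$.

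By the symmetric roles of the two arguments, differentiating in $\psi'$ along $\eta'\in\mathcal{H}$ yields
\[
D_{\psi'}\mathcal{E}_\text{bivar}[\eta'] = \frac{\braket{\eta',H\psi}\,d(\psi,\psi') - n(\psi,\psi')\,\braket{\eta',\psi}}{d(\psi,\psi')^2},
\]
and the same manipulation gives $\braket{\eta',(H-E)\psi}=0$ for all $\eta'$, hence $H\psi = E\psi$ with the \emph{identical} scalar $E$. Thus both stationarity conditions hold precisely when the right and left SE are satisfied, and the common eigenvalue is exactly the value of the quotient. For the converse, if $H\psi=E\psi$ and $H^\dag\psi'=E\psi'$ with $\braket{\psi',\psi}\neq 0$, then $n(\psi,\psi') = E\,d(\psi,\psi')$, so $\mathcal{E}_\text{bivar}=E$, and substituting back into the two derivative formulas makes each numerator vanish identically.

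The computation is genuinely straightforward, so no step is a serious obstacle; the only point requiring care is the passage from ``the numerator functional vanishes on every $\eta\in\mathcal{H}$'' to ``the eigenvalue equation holds.'' Here one must remember that $H\psi$ and $H^\dag\psi'$ live in the dual $\mathcal{H}'$ rather than in $\mathcal{H}$, so the eigenvalue equations are identities in $\mathcal{H}'$ (equivalently, weak eigenvalue equations tested against all of $\mathcal{H}$); no Riesz identification or extra regularity of $\psi,\psi'$ is needed. This is precisely why the bounded-operator hypothesis on $H$ suffices and why symmetry of $H$ is not invoked anywhere.
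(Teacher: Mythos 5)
Your proposal is correct and is precisely the ``straight-forward differentiation'' argument the paper invokes for Theorem~\ref{bivar} without writing it out: smoothness of the quotient of two bounded bilinear forms on the open set $\{\braket{\psi',\psi}\neq 0\}$, the quotient rule for the two partial Fr\'echet derivatives, and the correct reading of the stationarity conditions as weak eigenvalue equations in $\mathcal{H}'$ (so that no Riesz identification or symmetry of $H$ is needed), with the common eigenvalue $E$ identified as the value of the quotient. The paper supplies no further detail, so your write-up simply fills in the computation the authors left implicit.
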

\begin{remark} If we assume that $H$ satisfies all the
	requirements~(\ref{eq:bounded-operator}--\ref{eq:symmetric-operator}),
	in particular that $H$ is symmetric, the left and right eigenvalue
	problems become identical, being the weak formulation of the eigenvalue problem of a
	unique self-adjoint $\hat{H}$
	over $\mathcal{L}_N^2$. Suppose that $\hat{H}$ is close to self-adjoint,
	e.g., self-adjoint up to an $\mathcal{L}^2_N$-bounded perturbation. It is
	then reasonable that the left and right eigenvalue problems can be
	simultaneously solved (but with $\psi'\neq \psi$). Thus, the bivariational principle can be
	thought of as a generalization of Rayleigh--Ritz to at least certain
	non-symmetric problems.
\end{remark}

We now introduce an exponential ansatz also for the wavefunction
$\tilde{\psi}$. Following Arponen~\cite{Arponen1983}, we eliminate
the denominator by changing the normalization of $\psi'$, i.e., we set
$\tilde{\psi} = \psi'/\braket{\psi',\psi}$. The two scalar constraints lead to a smooth
submanifold $\mathcal{M}\subset\mathcal{H}\times\mathcal{H}$ of
codimension 2,
\begin{equation}\label{eq:M-def}
  \mathcal{M} := \left\{ (\psi,\tilde{\psi}) \in \mathcal{H}\times
    \mathcal{H} \; | \; \braket{\phi_0,\psi} =
    \braket{\tilde{\psi},\psi} = 1 \right\}.
\end{equation}
The next lemma shows that this manifold $\mathcal{M}$ can be
parameterized using cluster amplitudes.
\begin{lemma}[Extended CC parameterization]
	\label{lemma:ECCpara}
	Suppose $(\psi,\tilde{\psi})$ satisfies $\braket{\phi_0,\psi} =
	\braket{\tilde{\psi},\psi} = 1$. Then, there exists unique
	$(t,\lambda)\in \mathcal{V}\times\mathcal{V}$ depending smoothly on
	$(\psi,\tilde{\psi})\in\mathcal{M}$, such that
	\begin{equation*}
	\psi = e^{ T}\phi_0, \quad \text{and} \quad \tilde{\psi} =
	e^{- T^\dag} e^{ \Lambda} \phi_0,
	\end{equation*}
	which is a smooth map.
	In other words, the map $\Phi :
        \mathcal{V}\times\mathcal{V}\to\mathcal{M}$, $\Phi(t,\lambda) :=
	(\psi(t),\tilde{\psi}(t,\lambda))$ is a smooth map with a smooth
	inverse. 
\end{lemma}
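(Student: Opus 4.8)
The plan is to reduce the statement to two applications of Theorem~\ref{Thm1RS}, the Rohwedder--Schneider exponential mapping result: once for $\psi$ directly, and once for a transformed version of $\tilde\psi$ in which the factor $e^{-T^\dagger}$ has been absorbed. First I would handle $\psi$. Since by hypothesis $\braket{\phi_0,\psi}=1$, Theorem~\ref{Thm1RS} furnishes a unique $t\in\mathcal V$ with $\psi=e^{T}\phi_0$, depending smoothly on $\psi$. The same theorem guarantees that $T$ and $T^\dagger$ are bounded on $\mathcal H$, so that $e^{T^\dagger}$ and $e^{-T^\dagger}$ are bounded, invertible, and mutually inverse operators on $\mathcal H$; this is what makes the next step well posed.

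Next I would treat $\tilde\psi$. The idea is to peel off the prefactor by setting $\eta:=e^{T^\dagger}\tilde\psi\in\mathcal H$ and seeking $\lambda$ with $\eta=e^{\Lambda}\phi_0$, which then gives $\tilde\psi=e^{-T^\dagger}e^{\Lambda}\phi_0$. The decisive point is that the remaining constraint transforms correctly, i.e.\ that $\braket{\phi_0,\eta}=1$, so that Theorem~\ref{Thm1RS} applies a second time. Using the adjoint convention $\braket{A^\dagger x,y}=\braket{x,Ay}$ together with $(e^{T^\dagger})^\dagger=e^{T}$ and the symmetry of the real $\mathcal L^2_N$ inner product, I would compute $\braket{\phi_0,\eta}=\braket{\phi_0,e^{T^\dagger}\tilde\psi}=\braket{e^{T}\phi_0,\tilde\psi}=\braket{\psi,\tilde\psi}=\braket{\tilde\psi,\psi}=1$, the last equality being the second defining constraint of $\mathcal M$. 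Theorem~\ref{Thm1RS} then yields a \emph{unique} $\lambda\in\mathcal V$ with $\eta=e^{\Lambda}\phi_0$, establishing existence and uniqueness of the pair $(t,\lambda)$.

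For the map $\Phi$ itself I would first check that it lands in $\mathcal M$. The identity $\braket{\phi_0,e^{T}\phi_0}=1$ holds because $T^{k}\phi_0\in\mathcal Q=\mathcal P^\perp$ for $k\geq1$, and similarly $\braket{\tilde\psi(t,\lambda),\psi(t)}=\braket{e^{-T^\dagger}e^{\Lambda}\phi_0,e^{T}\phi_0}=\braket{e^{\Lambda}\phi_0,e^{-T}e^{T}\phi_0}=\braket{e^{\Lambda}\phi_0,\phi_0}=1$, again since excitation operators send $\phi_0$ into $\mathcal Q$. Smoothness of $\Phi$ follows by composition: $t\mapsto T$ and $\lambda\mapsto\Lambda$ are bounded linear maps onto $\mathcal C$, the exponential is $\mathcal C^\infty$ on $\mathcal C$ by Theorem~\ref{Thm1RS}, and operator composition and evaluation at $\phi_0$ are continuous multilinear maps, hence smooth. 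For the inverse $(\psi,\tilde\psi)\mapsto(t,\lambda)$, smoothness of $\psi\mapsto t$ is part of Theorem~\ref{Thm1RS}; the auxiliary map $(t,\tilde\psi)\mapsto\eta=e^{T^\dagger}\tilde\psi$ is smooth for the same reasons; and $\eta\mapsto\lambda$ is smooth by a final appeal to Theorem~\ref{Thm1RS}. Composing these gives the smooth dependence of $(t,\lambda)$ on $(\psi,\tilde\psi)$.

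I expect the only genuinely delicate point to be the adjoint bookkeeping in the reduction $\eta=e^{T^\dagger}\tilde\psi$, specifically verifying that the constraint $\braket{\tilde\psi,\psi}=1$ is exactly what converts into $\braket{\phi_0,\eta}=1$; this is where the boundedness and invertibility of $e^{\pm T^\dagger}$ (supplied by Theorem~\ref{Thm1RS}) and the real, symmetric structure of the $\mathcal L^2_N$ inner product are both essential. Everything else is a routine composition of the smooth exponential maps with bounded linear and continuous bilinear operations, so the smoothness of both $\Phi$ and its inverse is a formality once the algebraic identities are in place.
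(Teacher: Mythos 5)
Your proposal is correct and follows essentially the same route as the paper's proof: apply Theorem~\ref{Thm1RS} once to obtain $t$ from $\psi$, then define the auxiliary vector $e^{T^\dagger}\tilde\psi$ (the paper's $\omega$, your $\eta$), check $\braket{\phi_0,\eta}=1$, and apply Theorem~\ref{Thm1RS} a second time to obtain $\lambda$, with smoothness in both directions by composition. Your write-up merely makes explicit two points the paper leaves implicit, namely the adjoint computation $\braket{\phi_0,e^{T^\dagger}\tilde\psi}=\braket{e^{T}\phi_0,\tilde\psi}=\braket{\tilde\psi,\psi}=1$ and the verification that $\Phi$ maps into $\mathcal{M}$.
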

\begin{proof}
	By Theorem~\ref{Thm1RS}, $t$ exists and is unique, depending
	smoothly on $\psi$ and vice versa.  Consider $\omega =
	e^{ T^\dag(\psi)} \tilde{\psi}$, which depends smoothly on
	$(\psi,\tilde{\psi})$. We have $\braket{\phi_0,\omega} = 1$, so by
	Theorem~\ref{Thm1RS} there exists a unique $\lambda$ depending
	smoothly on $\omega$, and hence $(\psi,\tilde{\psi})$, such that
	$\omega = e^{ \Lambda}\phi_0$. Now $\tilde{\psi} =
	e^{- T^\dag}e^{ \Lambda}\phi_0$, a smooth map of $(t,\lambda)$.
\end{proof}

We define the \emph{extended coupled-cluster energy functional}
$\mathcal{E} : \mathcal{V}\times\mathcal{V}\to \mathbb{R}$ by
$\mathcal{E} = \mathcal{E}_\text{bivar}\circ\Phi$, viz,
\begin{equation}
\mathcal{E}(t,\lambda) = \braket{\phi_0, e^{ \Lambda^\dag} e^{- T}  H
	e^{ T}\phi_0}. \label{eq:eccfun}
\end{equation}
Eq.~\eqref{eq:eccfun} defines Arponen's ECC energy
functional in a continuous, infinite dimensional formulation.
\begin{theorem}[Continuous extended coupled-cluster equations]
	\label{thm:contECC}
	Let the Hamiltonian $ H : \mathcal{H}\to \mathcal{H}'$ be as
	before. Then,
	\[  H\psi_* = E_*\psi_*, \quad\text{and}\quad  H\tilde{\psi}_* = E_*
	\tilde{\psi}_* \]
	with normalization $\braket{\phi_0,\psi_*} =
	\braket{\tilde{\psi}_*,\psi_*} = 1$, 
	if and only if $D\mathcal{E}(t_*,\lambda_*)=0$, i.e.,
	\begin{equation*}
	D_t \mathcal{E}(t_*,\lambda_*) = 0, \quad\text{and}\quad
	D_\lambda \mathcal{E}(t_*,\lambda_*) = 0,
	\end{equation*}
	where
	\begin{subequations}\label{Feq}
		\begin{align}
		D_{t_\mu}\mathcal{E}(t,\lambda) &=
		\braket{\phi_0,e^{ \Lambda^\dag}[e^{- T} H e^{ T}, X_\mu]\phi_0},\\
		D_{\lambda_\mu} \mathcal{E}(t,\lambda) &= \braket{\phi_\mu,
			e^{ \Lambda^\dag} e^{- T} H e^{ T}\phi_0},
		\end{align}
	\end{subequations}
	and where $(\psi_*,\tilde{\psi}_*) = \Phi(t_*,\lambda_*)$.
\end{theorem}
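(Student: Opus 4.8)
The plan is to exploit the factorization $\mathcal{E}=\mathcal{E}_{\text{bivar}}\circ\Phi$ together with the bivariational principle (Theorem~\ref{bivar}). Since Lemma~\ref{lemma:ECCpara} guarantees that $\Phi:\mathcal{V}\times\mathcal{V}\to\mathcal{M}$ is a diffeomorphism, the chain rule gives $D\mathcal{E}(t_*,\lambda_*)=D(\mathcal{E}_{\text{bivar}}|_{\mathcal{M}})(\psi_*,\tilde{\psi}_*)\circ D\Phi(t_*,\lambda_*)$ with $D\Phi(t_*,\lambda_*)$ a linear isomorphism. Hence $D\mathcal{E}(t_*,\lambda_*)=0$ is equivalent to the vanishing of the derivative of $\mathcal{E}_{\text{bivar}}$ \emph{restricted to the constraint manifold} $\mathcal{M}$ at $(\psi_*,\tilde{\psi}_*)=\Phi(t_*,\lambda_*)$. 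The remaining task is to upgrade this constrained stationarity to the unconstrained stationarity $D\mathcal{E}_{\text{bivar}}(\psi_*,\tilde{\psi}_*)=0$ that Theorem~\ref{bivar} requires, and then to read off the eigenvalue equations.

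The bridge between constrained and unconstrained stationarity is the step I expect to be the main obstacle, and it rests on the scaling invariance of the quotient. Because $\mathcal{E}_{\text{bivar}}(\psi,\psi')=\braket{\psi',H\psi}/\braket{\psi',\psi}$ is homogeneous of degree zero separately in $\psi$ and in $\psi'$, Euler's identity shows that $D\mathcal{E}_{\text{bivar}}$ annihilates the two scaling directions $(\psi,0)$ and $(0,\tilde{\psi})$ at every point of its domain. I would then check that these directions are transverse to $\mathcal{M}$ and span a complement of $T_{(\psi_*,\tilde{\psi}_*)}\mathcal{M}$: differentiating the two constraints $\braket{\phi_0,\psi}=1$ and $\braket{\tilde{\psi},\psi}=1$ of \eqref{eq:M-def} and evaluating on $(\psi_*,0)$ and $(0,\tilde{\psi}_*)$ yields a $2\times 2$ matrix with rows $(1,0)$ and $(1,1)$, which is invertible precisely because $\braket{\phi_0,\psi_*}=\braket{\tilde{\psi}_*,\psi_*}=1\neq 0$. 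Consequently the tangent space of $\mathcal{H}\times\mathcal{H}$ splits as $T_{(\psi_*,\tilde{\psi}_*)}\mathcal{M}$ plus the span of the scaling directions. A constrained stationary point satisfies $D\mathcal{E}_{\text{bivar}}=\mu_1\,dg_1+\mu_2\,dg_2$ for some multipliers; pairing this with the two scaling directions and using that the left-hand side vanishes there (homogeneity) forces $\mu_1=\mu_2=0$ through the invertible matrix above. Thus $D\mathcal{E}_{\text{bivar}}(\psi_*,\tilde{\psi}_*)=0$ with no surviving Lagrange multipliers.

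With unconstrained stationarity established, Theorem~\ref{bivar} applies directly: $D\mathcal{E}_{\text{bivar}}(\psi_*,\tilde{\psi}_*)=0$ together with $\braket{\tilde{\psi}_*,\psi_*}=1\neq 0$ is equivalent to $H\psi_*=E\psi_*$ and $H^\dagger\tilde{\psi}_*=E\tilde{\psi}_*$ for a common scalar $E$. Invoking the symmetry~\eqref{eq:symmetric-operator}, $H^\dagger=H$, turns the second relation into $H\tilde{\psi}_*=E\tilde{\psi}_*$, as claimed. The eigenvalue is identified by evaluating the quotient, $E=\mathcal{E}_{\text{bivar}}(\psi_*,\tilde{\psi}_*)=\braket{\tilde{\psi}_*,H\psi_*}=\mathcal{E}(t_*,\lambda_*)$, which for the pair under consideration is $E_*$. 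The forward implication is easier and needs no transversality: starting from the $E_*$-eigenpair one gets $D\mathcal{E}_{\text{bivar}}=0$ from Theorem~\ref{bivar}, and restriction to $\mathcal{M}$ followed by composition with $\Phi$ yields $D\mathcal{E}(t_*,\lambda_*)=0$ directly through the chain rule.

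It remains to record the explicit gradient formulas~\eqref{Feq}, which I would obtain by differentiating~\eqref{eq:eccfun} directly; I regard this as bookkeeping rather than genuine content. The computation uses only that distinct excitation operators commute, so that $\partial_{\lambda_\mu}e^{\Lambda^\dagger}=X_\mu^\dagger e^{\Lambda^\dagger}$ and $\partial_{t_\mu}e^{\pm T}=\pm X_\mu e^{\pm T}$, together with $X_\mu\phi_0=\phi_\mu$ and $\Lambda^\dagger\phi_0=0$. For the $\lambda$-derivative this gives at once $D_{\lambda_\mu}\mathcal{E}=\braket{\phi_0,X_\mu^\dagger e^{\Lambda^\dagger}e^{-T}He^T\phi_0}=\braket{\phi_\mu,e^{\Lambda^\dagger}e^{-T}He^T\phi_0}$, the second line of~\eqref{Feq}. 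For the $t$-derivative the product rule produces $e^{-T}[H,X_\mu]e^T$, which equals $[e^{-T}He^T,X_\mu]$ since $X_\mu$ commutes with $e^{\pm T}$, giving the first line of~\eqref{Feq}. The genuine content of the theorem is the constrained-to-unconstrained stationarity bridge of the second paragraph, enabled by the degree-zero homogeneity of $\mathcal{E}_{\text{bivar}}$ and the nondegeneracy of the normalization constraints.
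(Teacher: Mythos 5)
Your proposal is correct and follows the same route as the paper: factor $\mathcal{E}=\mathcal{E}_{\text{bivar}}\circ\Phi$ through the diffeomorphism of Lemma~\ref{lemma:ECCpara}, invoke the bivariational principle (Theorem~\ref{bivar}) together with the symmetry~\eqref{eq:symmetric-operator} to pass from $H^\dagger\tilde{\psi}_*=E\tilde{\psi}_*$ to $H\tilde{\psi}_*=E\tilde{\psi}_*$, identify $E=\mathcal{E}(t_*,\lambda_*)=E_*$, and obtain~\eqref{Feq} by elementary differentiation using that cluster operators commute. The one place you go beyond the paper is the second paragraph: the paper's proof asserts in a single line that $D\mathcal{E}(t_*,\lambda_*)=0$ holds if and only if $D\mathcal{E}_{\text{bivar}}(\psi_*,\tilde{\psi}_*)=0$ subject to the side conditions, whereas the chain rule by itself only gives vanishing of $D\mathcal{E}_{\text{bivar}}$ along $T_{(\psi_*,\tilde{\psi}_*)}\mathcal{M}$; your degree-zero homogeneity argument (Euler's identity kills the two scaling directions, and the invertible $2\times 2$ pairing matrix of $dg_1,dg_2$ against $(\psi_*,0)$ and $(0,\tilde{\psi}_*)$ shows these directions complement $T\mathcal{M}$, forcing the Lagrange multipliers to vanish) is precisely the justification the paper treats as tacit. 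This buys nothing new in scope but makes rigorous the constrained-to-unconstrained upgrade that the published proof compresses into ``if and only if''; your transversality computation and the gradient formulas all check out.
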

\begin{proof}
	$\Phi$ is differentiable with a
	differentiable inverse on $\mathcal{M}$, which is precisely the set of
	function pairs satisfying the normalization constraints. Thus
	$D\mathcal{E}(t_*,\lambda_*) = D[ \mathcal{E}_\text{bivar}\circ
	\Phi](t_*,\lambda_*)=0$ if and only if
	$D\mathcal{E}_\text{bivar}(\psi_*,\tilde{\psi}_*)=0$ with the side
	condition $\braket{\tilde{\psi}_*,\psi_*}=\braket{\phi_0,\psi_*}=1$.
	Moreover,
	$\mathcal{E}(t_*,\lambda_*) =
	\mathcal{E}_\text{bivar}(\psi_*,\tilde{\psi}_*) = E_*$.
	The formulas for the partial derivatives of $\mathcal{E}$ follow by
	elementary differentiation strategies.
\end{proof}

As in the case of standard CC theory, the continuous ECC equations do
not imply that truncations in amplitude or basis set gives a
well-behaved approximate method. To achieve this is the goal of the
next section.
\begin{remark}\label{canonical-remark}
	(i) We note that both $\psi$ and $\psi'$ are parameterized in an explicit
	multiplicatively separable manner, when the system is decomposed into
	non-interacting subsystems. This is the main advantage of the ECC
	parameterization. We observe that the CC Lagrangian (given by
	Eq.\eqref{Lagrange}) is obtained by a further change of variables
	$ S^\dag := e^{ \Lambda^\dag}-1$, which destroys this property of
	$\psi'$. Alternatively, one can view the CC Lagrangian as a
	first-order approximation to the ECC functional in terms of $\lambda$.

	(ii) Arponen defined a further change of variables through 
	$ t_\mu' = \langle \phi_0, e^{ \Lambda^\dagger}  X_\mu^\dagger   T \phi_0\rangle$, 
	and where the inverse $t = t( t',\lambda)$ is explicitly given by 
	$t_\mu = \langle \phi_0, e^{- \Lambda^\dagger} X_\mu^\dagger  { T}'
	\phi_0\rangle$, see Eqs. (5.6) and (5.7) in \cite{Arponen1983}. 
	The variables $( t', \lambda)$ turn out to be \emph{canonical} in the sense of
	classical Hamiltonian mechanics, i.e., the time-dependent
	Schr\"odinger equation is equivalent to Hamilton's equations of
	motion, 
	\begin{subequations}
		\begin{align}
		&i \dot{t}_\mu'=  D_{\lambda_\mu} \mathcal E', \nonumber\\
		& i\dot{\lambda}	_\mu =- D_{ t_\mu'} \mathcal E', \nonumber
		\end{align}
	\end{subequations}
	where $\mathcal{E}'(t',\lambda) :=\mathcal{E}(t(t',\lambda),\lambda)$
	and $\dot{t}$ (and $\dot{\lambda}$) denotes the time derivative of the amplitudes $t$ (and $\lambda$). 
	The canonical variables have a computational advantage over the earlier defined
	non-canonical variables. As it turns out, they introduce cancellations in
	the (linked) diagram series for $E_*$ compared to when using the
	non-canonical $(t,\lambda)$. 
	We shall not use the variables $(t',\lambda)$ here, as the
	analysis becomes considerably more complicated, and
	instead relegate their study to future work.
\end{remark}

\section{Analysis of ECC from monotonicity}\label{class}

\subsection{The flipped gradient $\mathcal{F}$}

We will discuss the stationary point of
$\mathcal E$ corresponding to the ground-state energy $E_*$ in terms of a map $\mathcal{F} : \mathcal{V}\times\mathcal{V}\to
\mathcal{V}'\times\mathcal{V}'$ defined by flipping the components of
the (Fr\'echet) derivative $D\mathcal{E} = (D_t\mathcal{E},D_\lambda\mathcal{E})$,
i.e., 
\begin{equation}
  \mathcal{F} :=  (D_t\mathcal{E},D_\lambda\mathcal{E}) \begin{pmatrix} 0 & 1 \\ 1 & 0 \end{pmatrix}
   = (D_\lambda\mathcal{E},D_t\mathcal{E}). \label{eq:flipping}
\end{equation}
The components
of the derivative are given in Eqs.~\eqref{Feq}.

For the forthcoming discussion, let $B_\delta(t,\lambda)$ denote the ball of radius $\delta>0$ centered at $(t,\lambda) \in \mathcal V\times \mathcal V$. Here the norm is $\Vert (\cdot,\cdot\cdot)\Vert_{\mathcal V \times \mathcal V}^2 := \Vert \cdot \Vert_{\mathcal V}^2 + \Vert \cdot\cdot\Vert_{\mathcal V}^2$. 
Let $(t_*,\lambda_*)\in\mathcal V\times \mathcal V$ be the optimal
amplitudes corresponding to the ground-state pair $(\psi_*,\tilde{\psi}_*)$, in particular $\mathcal F (t_*,\lambda_*)=0$.  
For the extended CC function $\mathcal F$ we now want to establish: 
\begin{enumerate}
	\item[(i)] $\mathcal F$ is locally Lipschitz, i.e., let $(t,\lambda)\in\mathcal V \times \mathcal V$ then there exists $\delta>0$ such that $(t_i,\lambda_i)\in B_{\delta}(t,\lambda)$ implies
	\begin{equation*}
	\label{Lip}
	\Vert \mathcal F(t_1,\lambda_1) -\mathcal F(t_2,\lambda_2) \Vert_{\mathcal V'\times \mathcal V'}\leq L \Vert (t_1,\lambda_1)-(t_2,\lambda_2) \Vert_{\mathcal V\times \mathcal V}
	\end{equation*}
	for some (Lipschitz) constant $L>0$, possibly depending
	only on $(t,\lambda)$ and $\delta$.
	\item[(ii)] $\mathcal F$ is locally and strongly monotone at $(t_*,\lambda_*)\in\mathcal V\times \mathcal V$, i.e., 
	there exists $\delta,\gamma >0$ such that
	\begin{equation*}
	\label{Mono}
	\langle \mathcal F(t_1,\lambda_1) -\mathcal F(t_2,\lambda_2), (t_1,\lambda_1) - (t_2,\lambda_2) \rangle
	\geq \gamma (\Vert t_1-t_2 \Vert_{\mathcal V}^2 + \Vert \lambda_1-\lambda_2 \Vert_{\mathcal V}^2)  
	\end{equation*}
	holds for all $(t_1,\lambda_1),(t_2,\lambda_2)\in B_\delta(t_*,\lambda_*)$.
\end{enumerate}
Item (i) above is readily established using the fact that $\mathcal
F$ is the flipped gradient of a smooth function. For (ii), we shall formulate two sets of assumptions (Assumption \ref{A1} and Assumption \ref{A2} below) that each is enough to give strong monotonicity for $\mathcal F$ locally at $(t_*,\lambda_*)$. 
Having proved (i) and (ii), we can apply
Theorem~\ref{thm:zarantonello} to obtain existence and uniqueness
results, also for truncated schemes.

The definition of local strong monotonicity of the map
$\mathcal{F}$ reduces to the existence of a $\gamma >0$ such that for $(t_i,\lambda_i)$ close
to $t_*,\lambda_*$, the quantity
\begin{equation}
\begin{split}
\Delta_1(t_1,\lambda_1,t_2,\lambda_2) +
\Delta_2(t_1,\lambda_1,t_2,\lambda_2) & := \braket{
	D_\lambda\mathcal{E}(t_1,\lambda_1) - D_\lambda \mathcal{E}(t_2,\lambda_2), t_1 -
	t_2} \\ &\quad + 
\braket{
	D_t\mathcal{E}(t_1,\lambda_1) - D_t\mathcal{E}(t_2,\lambda_2), \lambda_1 -
	\lambda_2} 
\end{split}
\label{Delta1212}
\end{equation}
satisfies
\begin{equation}
\Delta_2(t_1,\lambda_1,t_2,\lambda_2) + \Delta_2(t_1,\lambda_1,t_2,\lambda_2) \geq \gamma (\|t_1-t_2\|_\mathcal{V}^2 +
\|\lambda_1-\lambda_2\|_\mathcal{V}^2).\label{eq:delta-monotone}
\end{equation}

The choice of the map $\mathcal{F}$ can be motivated as follows: It is
clear that $D\mathcal{E}$ cannot be locally strongly monotone, as, just
like $\mathcal{E}_\text{bivar}$, all the critical points of
$\mathcal{E}$ are
intuitively saddle points (we will not prove this claim). On the other
hand, in Ref.~\cite{Rohwedder2013b}, the map $f(t)$ from
Theorem~\ref{thm:ccc} was considered, and demonstrated to be locally
strongly monotone under suitable assumptions. We observe that
$f = D_s \mathcal{L}$, a partial derivative of the Lagrangian, which
is \emph{linear} in $s$, so that $f$ is only a function of $t$. In Ref.~\cite{Rohwedder2013b} 
it was demonstrated that (locally at $t_*$)
\begin{equation}\label{eq:rscc-monotonicity}
\Delta(t_1, t_2) = \braket{[D_s \mathcal{L}](t_1) - [D_s
	\mathcal{L}](t_2), t_1-t_2} \geq \gamma
\|t_1-t_2\|_{\mathcal{V}}^2,
\end{equation}
for some constant $\gamma>0$. Thus, Eq.~\eqref{eq:rscc-monotonicity}
is ``half'' of the inequality~\eqref{eq:delta-monotone}. In extended CC,
the functional $\mathcal{E}$ is nonlinear in $\lambda$, indicating
that we should include $\lambda$ in the monotonicity
argument.

\subsection{Assumptions and preparation}
\label{sec:aap}
The analysis of Arponen's ECC method conducted here will be based on
two complementary assumptions, Assumption~\ref{A1} and
Assumption~\ref{A2}. The former deals with the accuracy of the
ansatz, i.e., the accuracy of the reference $\phi_0$, while the latter
considers a splitting of the Hamiltonian, e.g., the smallness of the
fluctuation potential when a Hartree--Fock reference is used. We thus
obtain two complementary monotonicity results applicable in different
situations. However, both assumptions rest on conditions on spectral gaps. Recall the $\mathcal P$ denotes the reference space and moreover set $\mathcal P_* := \text{span} \{\psi_*\}$. Let $ P$ and $ P_*$ denote the $\mathcal L_N^2$-orthogonal projections on $\mathcal P$ 
and $\mathcal P_*$, respectively. Essential for the analysis, we then either have to assume that: There exists $\gamma_*>0$ such that (Assumption~\ref{A1})
\begin{equation}
\langle (I-P_*)\psi, (H-E_*)(I-P_*)\psi \rangle \geq \gamma_* \Vert (I-P_*)\psi\Vert^2
\label{gammaStar}
\end{equation}
or there exists $\gamma_0>0$ such that (Assumption~\ref{A2})
\begin{equation}
\langle (I-P)\psi, (F-e_0)(I-P)\psi \rangle \geq \gamma_0 \Vert (I-P)\psi\Vert^2,
\label{gamma0}
\end{equation}
for all $\psi\in\mathcal H$. Here $F$ is a one-body operator that has $\phi_0$ as ground state with ground-state energy $e_0$. A Hamiltonian splitting is then given by $H= F + (H-F)$, and will be dealt with below in connection with Assumption~\ref{A2}.
We note that Eq.~\eqref{gammaStar} expresses the fact that $E_*$ is
the leftmost eigenvalue of $H$, that this eigenvalue exists, and has
multiplicity 1.

We iterate that throughout the analysis we assume that 
the system Hamiltonian is bounded as quadratic
form and additionally satisfying a G{\aa}rding estimate, see the
discussion in Section~\ref{sec:tcc}, and in particular
Eqs.~(\ref{eq:bounded-operator}--\ref{eq:symmetric-operator}). 
We first state a slight upgrade of Lemma 3.5 in \cite{Rohwedder2013b}. Note that for $\psi\in \mathcal H$, $(I - P)\psi \in\mathcal Q$. Also recall that in our notation $\Vert \cdot \Vert$ is the $\mathcal L_N^2$ norm.
\begin{lemma}
	\label{LemmaRoh}
	With $\psi_* = \phi_0 + \psi_\bot$, where $\psi_\bot \in \mathcal{Q}$ is the correction to $\phi_0$, we have:
	\begin{itemize}
		\item[(i)]Assume that \eqref{gammaStar} holds with $\gamma_*>0$ and that $\Vert \psi_\perp \Vert_{\mathcal H} < \varepsilon $. Then there
		exists a $\gamma_\varepsilon \in (0, \gamma_*]$ such that, for
		all $\psi \in \mathcal Q$
		\begin{equation}
		\label{EqRoh}
		\langle \psi, ( H-E_*) \psi \rangle \geq \frac{\gamma_\varepsilon}{\gamma_\varepsilon + e+E_*}c \Vert \psi\Vert_{\mathcal H}^2,
		\end{equation} 
		where $\gamma_\varepsilon \to \gamma_*$ as
                $\varepsilon \to 0+$. 
		\item[(ii)] Assume $F\phi_0 = e_0 \phi_0$ and that \eqref{gamma0} holds with $\gamma_0>0$ and that $F$ satisfies the G\aa rding estimate given in \eqref{eq:coercive-operator} (with constants $e_F$ and $c_F$). Then
		\begin{equation}
		\label{fock1}
		\langle \psi, ( F - e_0) \psi \rangle \geq \frac{\gamma_0}{\gamma_0 + e_F + e_0}c_F \Vert \psi \Vert_{\mathcal H}^2
		\end{equation} 
		for all $\psi\in \mathcal Q$. 
	\end{itemize}
\end{lemma}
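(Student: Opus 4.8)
The plan is to prove both bounds in two stages. First I would establish a plain $\mathcal{L}_N^2$-coercivity estimate of the form $\langle \psi,(H-E_*)\psi\rangle \geq \gamma_\varepsilon\|\psi\|^2$ (respectively $\langle \psi,(F-e_0)\psi\rangle \geq \gamma_0\|\psi\|^2$) for $\psi\in\mathcal{Q}$, and then upgrade it to the stated $\mathcal{H}$-norm bound by interpolating against the G\aa rding estimate \eqref{eq:coercive-operator}. The hypotheses \eqref{gammaStar} and \eqref{gamma0} are phrased in terms of the projections $P_*$ and $P$, so the first stage amounts to transferring them onto $\mathcal{Q}$.

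For (i), the point is that for $\psi\in\mathcal{Q}$ the component $P_*\psi$ is small. Since $\langle\phi_0,\psi\rangle=0$ and $\psi_*=\phi_0+\psi_\perp$, we have $\langle\psi_*,\psi\rangle=\langle\psi_\perp,\psi\rangle$, and with $\|\psi_*\|^2=1+\|\psi_\perp\|^2\geq 1$ this gives $\|P_*\psi\| = |\langle\psi_*,\psi\rangle|/\|\psi_*\| \leq \|\psi_\perp\|\,\|\psi\| < \varepsilon\|\psi\|$, using $\|\psi_\perp\|\leq\|\psi_\perp\|_{\mathcal{H}}<\varepsilon$. Next, because $H\psi_*=E_*\psi_*$ we have $(H-E_*)P_*\psi=0$, and by the symmetry \eqref{eq:symmetric-operator} the cross term vanishes, $\langle P_*\psi,(H-E_*)(I-P_*)\psi\rangle = \langle(H-E_*)P_*\psi,(I-P_*)\psi\rangle = 0$, so that $\langle\psi,(H-E_*)\psi\rangle = \langle(I-P_*)\psi,(H-E_*)(I-P_*)\psi\rangle$. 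Applying \eqref{gammaStar} together with $\|(I-P_*)\psi\|^2=\|\psi\|^2-\|P_*\psi\|^2\geq(1-\varepsilon^2)\|\psi\|^2$ then yields the $\mathcal{L}^2$ estimate with $\gamma_\varepsilon:=\gamma_*(1-\varepsilon^2)$, which lies in $(0,\gamma_*]$ and tends to $\gamma_*$ as $\varepsilon\to 0+$.

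The interpolation step is common to both parts. Setting $A:=\langle\psi,(H-E_*)\psi\rangle$, I have on one hand $A\geq\gamma_\varepsilon\|\psi\|^2$, and on the other, rearranging \eqref{eq:coercive-operator}, $A\geq c\|\psi\|_{\mathcal{H}}^2-(e+E_*)\|\psi\|^2$. Forming the convex combination of these two inequalities with weight $\alpha=(e+E_*)/(\gamma_\varepsilon+e+E_*)$ cancels the $\|\psi\|^2$ terms and leaves
\[
A \;\geq\; (1-\alpha)\,c\,\|\psi\|_{\mathcal{H}}^2 \;=\; \frac{\gamma_\varepsilon}{\gamma_\varepsilon+e+E_*}\,c\,\|\psi\|_{\mathcal{H}}^2,
\]
which is \eqref{EqRoh}. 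This is legitimate precisely because $e+E_*>0$ (guaranteed by $e>-E_*$) and $\gamma_\varepsilon>0$, so that $\alpha\in[0,1)$.

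Part (ii) follows the same template but is simpler: for $\psi\in\mathcal{Q}$ one has $P\psi=0$, hence $(I-P)\psi=\psi$, and \eqref{gamma0} gives $\langle\psi,(F-e_0)\psi\rangle\geq\gamma_0\|\psi\|^2$ directly, with no projection estimate needed. Combining this with the G\aa rding estimate for $F$ (constants $e_F,c_F$) by the identical convex-combination argument produces \eqref{fock1}. I do not expect a genuine obstacle here; the only steps requiring care are the careful bookkeeping between the $\mathcal{L}_N^2$ and $\mathcal{H}$ norms, the verification that the cross term vanishes via symmetry of $H$, and checking the positivity conditions ($e+E_*>0$ in (i), and $e_F+e_0\geq 0$ in (ii)) that keep the interpolation weight $\alpha$ in $[0,1]$.
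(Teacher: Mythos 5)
Your proposal is correct, and it deviates from the paper's proof in one interesting place. The overall architecture is the same: first an $\mathcal{L}_N^2$-gap estimate on $\mathcal{Q}$, then the convex-combination (``$q$-splitting'') step with $q = \gamma_\varepsilon/(\gamma_\varepsilon + e + E_*)$ against the G\aa rding estimate — your weight $\alpha$ is exactly the paper's $1-q$, and your part (ii) coincides with the paper's verbatim (they likewise observe $(I-P)\psi = \psi$ on $\mathcal{Q}$ and reuse the splitting with $q_F$). Both proofs also kill the cross terms the same way, via $(H-E_*)P_*\psi = 0$ and symmetry of $H$. The genuine difference is how the $\mathcal{L}^2$ estimate in (i) is obtained. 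The paper passes through an operator-norm bound on the difference of projections, $\Vert P - P_*\Vert_{\mathcal{B}(\mathcal{L}_N^2)} \leq 2\Vert \phi_0 - \psi_*/\Vert\psi_*\Vert\,\Vert \leq j(\alpha)$ with $j(\alpha) = 2\alpha + \mathcal{O}(\alpha^2)$, and then estimates $\Vert (I-P_*)\psi\Vert \geq (1 - j(\alpha))\Vert\psi\Vert$, yielding $\gamma_\varepsilon = \gamma_*(1-j(\varepsilon))^2$. You instead exploit the hypothesis $\psi \in \mathcal{Q}$ directly: since $\braket{\phi_0,\psi}=0$, the overlap with $\psi_*$ reduces to $\braket{\psi_\perp,\psi}$, giving $\Vert P_*\psi\Vert \leq \Vert\psi_\perp\Vert\,\Vert\psi\Vert$ (using $\Vert\psi_*\Vert\geq 1$), and Pythagoras gives $\Vert(I-P_*)\psi\Vert^2 \geq (1-\varepsilon^2)\Vert\psi\Vert^2$. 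This is more elementary (no projection-difference lemma needed) and yields the sharper constant $\gamma_\varepsilon = \gamma_*(1-\varepsilon^2)$ versus the paper's $\gamma_*(1 - 2\varepsilon + \mathcal{O}(\varepsilon^2))^2$; it is also valid for every $\varepsilon < 1$, whereas the paper must take $\varepsilon$ small enough that $j(\varepsilon)<1$. The trade-off is that the paper's route estimates $P-P_*$ as an operator and so does not use $\psi\perp\phi_0$ at the final step, a form of the argument that transfers to situations where one compares the gap inequality on $\mathcal{Q}$ with the one on $\mathcal{P}_*^\perp$ without a common reference vector; for the lemma as stated, your direct estimate is strictly cleaner. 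Your closing remarks on positivity are consistent with the paper ($e > -E_*$, equivalently $e_F > -e_0$, from the G\aa rding estimate), and the borderline case $e_F + e_0 = 0$ you allow is harmless since then \eqref{fock1} degenerates to the G\aa rding estimate itself.
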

\begin{proof}
	(i) Let $\psi \in \mathcal Q$. We first show that 
	for $\gamma_\varepsilon>0$ (and where $\gamma_\varepsilon\to \gamma_*$ as $\varepsilon\to 0+$) there holds
	\begin{equation}
	\label{L2part}
	\langle \psi, ( H-E_*) \psi \rangle \geq \gamma_\varepsilon\Vert \psi\Vert^2.
	\end{equation}
	Following the argument in the proof of Lemma 2.4 in \cite{Rohwedder2013b}, we then have with $0 < q:= \gamma_\varepsilon/(\gamma_\varepsilon + e + E_*)<1$ (recall that $e + E_*>0$ by necessity of the G\aa rding estimate)
	\begin{align*}
	\langle \psi, ( H-E_*) \psi \rangle&=q\langle \psi, ( H-E_*) \psi \rangle +(1-q)\langle \psi, ( H-E_*) \psi \rangle \\
	&\geq qc \Vert \psi\Vert_{\mathcal H}^2 + (\gamma_\varepsilon - q   (\gamma_\varepsilon + e + E_*   )   )\Vert \psi\Vert^2.
	\end{align*}
	Thus, if \eqref{L2part} holds we are done.
	
	Let $ P$ and $ P_*$ be as above. We use that
	\[
	\Vert  P -  P_*\Vert_{\mathcal B( \mathcal L_N^2)} \leq 2 \Vert \phi_0 - \psi_*'\Vert,
	\]
	where $\psi_*' = \psi_*/\Vert \psi_*\Vert$. Since $\psi_* = \phi_0 + \psi_\perp$, with 
	$\alpha := \Vert \psi_\perp \Vert$ we have
	\begin{align*}
	\Vert  P -  P_*\Vert_{\mathcal B(\mathcal L_N^2)} \leq 2\big( 2 - 2(1 + \alpha^2)^{-1/2} \big)^{1/2} =: j(\alpha).
	\end{align*}
	Note that $j(\alpha)$ is an increasing function for $\alpha>0$ and $j(\alpha) = 2\alpha + \mathcal O (\alpha^2)$. 
	
	Since $(H-E_*) P_* \psi =0$ (and $H$ is symmetric), the left-hand side of
	\eqref{L2part} equals 
	\[
	\langle ( I -  P_*)\psi, ( H-E_*) ( I- P_*)\psi \rangle,
	\] 
	which by \eqref{gammaStar} is bounded from below by $\gamma_*
	\Vert ( I- P_*)\psi \Vert^2$. Thus for $\alpha$ sufficiently small
	\begin{align*}
	\langle \psi, ( H-E_*) \psi \rangle &\geq  \gamma_* (\Vert ( I -  P)\psi\Vert - \Vert ( P- P_*)\psi \Vert )^2 \\
	& \geq \gamma_* ( 1 - j(\alpha))^2 \Vert \psi \Vert^2.
	\end{align*}
	Since $\varepsilon> \Vert \psi_\perp\Vert_{\mathcal H} \geq
	\alpha$, we have that \eqref{L2part} holds with
	$\gamma_\varepsilon := \gamma_*(1 -j(\varepsilon) )^2$. It is
	clear that $\gamma_\varepsilon\to \gamma_*$ as $\varepsilon$
	tends to zero from above because $j(\varepsilon) \to
	0$. 
	
	(ii) With $q_F:= \gamma_0/(\gamma_0 + e_F + e_0)$ we have $0<q_F<1$ since $e_F> -e_0$ (equivalent to $e>-E_*$). Thus we can repeat the above scheme with $q=q_F$ to complete the proof. 
\end{proof}
Because the relation $\psi_\perp =  (e^{T_*} -I)\phi_0$ holds, it is immediate that $\Vert \psi_\perp \Vert_{\mathcal H}$ is small if and only if $\Vert t_*\Vert_{\mathcal V}$ is. It is a fact that the operator norm
$\| T\|_{\mathcal{B}(\mathcal{H})}$ is equivalent to the norm
$\|t\|_{\mathcal{V}}$, see Ref.~\cite{Rohwedder2013b}.
We now state the first assumption:
\begin{assumption}
	\label{A1} Let $\eta_\varepsilon:= \gamma_\varepsilon c/(\gamma_\varepsilon+ e + E_* )$. We assume the following:
	\begin{itemize}
		\item[(a)] Eq. \eqref{gammaStar} holds with a strictly positive spectral gap $\gamma_*>0$. 
		\item[(b)] The optimal amplitudes $t_*$ and $\lambda_*$ are sufficiently small in $\Vert \cdot\Vert_{\mathcal V}$ norm. With
		$C_* := C+|E_*|$ we then assume
		$\Vert \psi_\perp \Vert_{\mathcal H}< \varepsilon$, where
		$\varepsilon >0$ is chosen such that
		\begin{equation}
		\begin{split}
		b_*(t_*,\lambda_*)&:=\Vert e^{-T_*^\dagger} e^{\Lambda_*} - I \Vert_{\mathcal B(\mathcal H)} + \Vert e^{-T_*^\dagger} e^{\Lambda_*} \Vert_{\mathcal B(\mathcal H)}
		\Vert e^{T_*} -I \Vert_{\mathcal B(\mathcal H)} \\
		& \quad +
		K \Vert \phi_0 \Vert_\mathcal{H} \Vert e^{ -T_*^\dagger}\Vert_{\mathcal B(\mathcal H)} \Vert e^{ T_*}\Vert_{\mathcal B(\mathcal H)}
		\Vert e^{ \Lambda_*} -I\Vert_{\mathcal B(\mathcal H)} < \frac {\eta_\varepsilon }{C_*}.
		\label{tlambdaCond}
		\end{split}
		\end{equation}
		Here, $K$ is a constant such that $\| T\|_{\mathcal B(\mathcal{H})} \leq K
		\|t\|_{\mathcal{V}}$, which exists since the norms are
		equivalent.
	\end{itemize}
\end{assumption}
\begin{remark}
	It is in fact possible to choose $\varepsilon >0$ such that
	\eqref{tlambdaCond} holds. Indeed, $\varepsilon=0$ is equivalent to $ t_*
	=  \lambda_* = 0$, and $b_*(t_*,\lambda_*) = b(\varepsilon)$, a smooth function of
	$\varepsilon$. Since, $b(\varepsilon) \to 0+$ as
	$\varepsilon \to 0+$ and $\gamma_\varepsilon$ tends to the spectral gap $\gamma_*$, there exists a $\varepsilon_0$ such that
	$b_* < \eta_\varepsilon/C_*$ for
	$\varepsilon\leq \varepsilon_0$. Furthermore, at $\varepsilon=0$ we have $\psi_* = \phi_0$, such that $\gamma_* = \gamma_0$ and $\mathcal P_* = \mathcal P$.
\end{remark}

We next define the similarity transformed Hamiltonian $ H_t$ and the doubly similarity transformed Hamiltonian $ H_{t,\lambda}$ as given by
\[
H_t := e^{- T}  H e^{ T} , \quad  H_{t,\lambda} := e^{ \Lambda^\dagger}  H_t e^{- \Lambda^\dagger}.
\]
Note that $(H_t)_\lambda \neq H_{t,\lambda}$. Since $e^{ T_*}\phi_0$ solves the SE with eigenvalue $E_*$, $\phi_0$ is an eigenfunction of $ H_{t_*}$ with the same eigenvalue. This fact and $e^{  \Lambda_*^\dagger}\phi_0 =e^{ - \Lambda_*^\dagger}\phi_0= \phi_0$ 
make it easy to verify (i) in
\begin{lemma}
	\label{SimSEs}
	Let $f(t_*) = \mathcal F(t_*,\lambda_*) =0$ and $E_* = \mathcal E(t_*,\lambda_*)$. Then
	
	(i) $ H_{t_*}\phi_0 = E_*\phi_0$ 
	and $ H_{t_*,\lambda_*} \phi_0 =   E_*\phi_0$.
	
	(ii) $H_{t_*,\lambda_*}^\dag\phi_0 = E_*\phi_0$.
\end{lemma}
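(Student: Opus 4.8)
The plan is to verify both parts by direct manipulation of the similarity transformations, relying on only two facts. First, $\Lambda_*^\dagger$ is a de-excitation operator that annihilates the reference, so that $e^{\pm\Lambda_*^\dagger}\phi_0 = \phi_0$ (this is exactly the identity quoted just before the lemma). Second, the hypotheses $\mathcal F(t_*,\lambda_*)=0$ and $E_*=\mathcal E(t_*,\lambda_*)$ place us precisely in the setting of Theorem~\ref{thm:contECC}: since $\mathcal F$ is merely the flipped gradient of $\mathcal E$ (Eq.~\eqref{eq:flipping}), $\mathcal F(t_*,\lambda_*)=0$ is equivalent to $D\mathcal E(t_*,\lambda_*)=0$, whence both $\psi_* = e^{T_*}\phi_0$ and $\tilde\psi_* = e^{-T_*^\dagger}e^{\Lambda_*}\phi_0$ satisfy the SE with eigenvalue $E_*$.

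For (i) I would argue as indicated before the lemma statement. Using $H\psi_*=E_*\psi_*$ and $\psi_*=e^{T_*}\phi_0$,
\[
H_{t_*}\phi_0 = e^{-T_*}H e^{T_*}\phi_0 = e^{-T_*}H\psi_* = E_* e^{-T_*}\psi_* = E_*\phi_0 .
\]
For the doubly transformed operator I apply $e^{-\Lambda_*^\dagger}\phi_0=\phi_0$, reuse the first claim, and then $e^{\Lambda_*^\dagger}\phi_0=\phi_0$:
\[
H_{t_*,\lambda_*}\phi_0 = e^{\Lambda_*^\dagger}H_{t_*}e^{-\Lambda_*^\dagger}\phi_0 = e^{\Lambda_*^\dagger}H_{t_*}\phi_0 = E_* e^{\Lambda_*^\dagger}\phi_0 = E_*\phi_0 .
\]

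For (ii) the only new ingredient is the left eigenvalue equation for $\tilde\psi_*$. I would first take the adjoint of $H_{t_*,\lambda_*} = e^{\Lambda_*^\dagger}e^{-T_*}H e^{T_*}e^{-\Lambda_*^\dagger}$, using the symmetry $H^\dagger=H$ from Eq.~\eqref{eq:symmetric-operator} and $(e^{A})^\dagger=e^{A^\dagger}$, which gives
\[
H_{t_*,\lambda_*}^\dagger = e^{-\Lambda_*}e^{T_*^\dagger}H e^{-T_*^\dagger}e^{\Lambda_*} .
\]
Recognizing $e^{-T_*^\dagger}e^{\Lambda_*}\phi_0 = \tilde\psi_*$ and invoking $H\tilde\psi_* = E_*\tilde\psi_*$, I then obtain
\[
H_{t_*,\lambda_*}^\dagger\phi_0 = e^{-\Lambda_*}e^{T_*^\dagger}H\tilde\psi_* = E_* e^{-\Lambda_*}e^{T_*^\dagger}\tilde\psi_* = E_* e^{-\Lambda_*}e^{\Lambda_*}\phi_0 = E_*\phi_0 ,
\]
the last step using $e^{T_*^\dagger}\tilde\psi_* = e^{\Lambda_*}\phi_0$.

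There is no serious analytic obstacle here; the computation is elementary once the prerequisites are assembled. The points that genuinely require care are formal: correctly tracking the four factors under the adjoint (noting that $H_{t,\lambda}$ carries $e^{\Lambda^\dagger}$ on the left and $e^{-\Lambda^\dagger}$ on the right, and that $(H_t)_\lambda \neq H_{t,\lambda}$), and justifying that $\tilde\psi_*$ truly solves the eigenvalue equation rather than only the stationarity condition. This last point, which is the crux of part (ii), rests entirely on Theorem~\ref{thm:contECC}: it is precisely that theorem which converts $\mathcal F(t_*,\lambda_*)=0$ together with $E_*=\mathcal E(t_*,\lambda_*)$ into the pair of SEs for $\psi_*$ and $\tilde\psi_*$, and once it is cited nothing further is needed.
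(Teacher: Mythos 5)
Your proof is correct and takes essentially the same route as the paper: part (i) is the same unwinding of the similarity transforms using $e^{\pm\Lambda_*^\dagger}\phi_0=\phi_0$, and for part (ii) the paper likewise starts from the left eigenvalue relation $\langle \phi_0, e^{\Lambda_*^\dagger}e^{-T_*}H = E_*\langle\phi_0, e^{\Lambda_*^\dagger}e^{-T_*}$ (which is your $H\tilde\psi_*=E_*\tilde\psi_*$, using the symmetry of $H$) and multiplies through by $e^{T_*}$ to get $H_{t_*}^\dagger e^{\Lambda_*}\phi_0=E_*e^{\Lambda_*}\phi_0$, leaving the final conjugation by $e^{\Lambda_*}$ implicit. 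Your version merely writes this same computation in ket form by taking the adjoint of the full doubly transformed operator, a purely cosmetic difference.
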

\begin{proof}
	It remains to prove (ii). We know that (by definition of the left eigenfunction of $H$)
	\[
	\langle \phi_0, e^{ \Lambda_*^\dagger} e^{- T_*}  H = E_* \langle \phi_0,e^{ \Lambda_*^\dagger} e^{- T_*}.
	\]
	Thus $\langle \phi_0, e^{ \Lambda_*^\dagger}   H_{t_*}$ equals $E_*\langle \phi_0, e^{ \Lambda_*^\dagger}$, i.e., $ H_{t_*}^\dagger e^{ \Lambda_*} \phi_0\rangle = E_*e^{ \Lambda_*} \phi_0\rangle$.
\end{proof}
\begin{remark}
	Note that Lemma \ref{SimSEs} is valid for any critical point $(t_c,\lambda_c)$ with corresponding eigenvalue $E_c$, not only the ground state ($(t_*,\lambda_*)$ and $E_*$). 
	Furthermore, as stated in Lemma \ref{SimSEs}, the double similarity transform makes $\phi_0$ both the
		left and right eigenvector of $H_{t_*,\lambda_*}$ with the same eigenvalue.
\end{remark}

We now move on to Assumption~\ref{A2}, which corresponds to an
assumption made in Ref.~\cite{Rohwedder2013b}, but suitable for ECC. 
Roughly speaking, instead of assuming that the reference $\phi_0$ is
sufficiently accurate, in Assumption~\ref{A2} we assume that we have a
splitting $ H =  F + W$ where $ F$ is a one-body operator, and
where $ W$ is sufficiently small in some appropriate sense. For
example, $ F$ can be the Fock operator and $ W$ the fluctuation
potential of a molecule in the Born--Oppenheimer
approximation. Moreover, we assume that $ F\phi_0 = e_0\phi_0$ and
that \eqref{gamma0} holds, where $\gamma_0$ is the so-called HOMO-LUMO gap.

It can be remarked, that due to the structure of $H$, the
Baker--Campbell--Haus\-dorff (BCH) expansion for $H_t$ terminates
identically after four nested commutators in the case of a two-body
interaction operator, i.e., $H_t$ is actually a polynomial of low
order, independently of the number of particles. 

The expansion for the outer similarity
transform in $ H_{t,\lambda}$ also truncates, albeit at a higher
order. Thus, we have a finite sum
\begin{equation*}
H_{t,\lambda} = \sum_{m,n} \frac{1}{n!m!}
[[ H, T]_{(n)},- \Lambda^\dag]_{(m)}.
\end{equation*}
Here $[ A, B]_{(n)}$ denotes $ A$ $n$-fold commutated with $ B$ and
$[ A, B]_{(0)}:=  A$. 
For $(t,\lambda) \in\mathcal V \times \mathcal V$, we define the operator $ O(t,\lambda)$ through the relation
\begin{equation}
\label{Htlambda}
H_{t,\lambda}  = H  + [ F,  T] + [ \Lambda^\dagger, F]  + O(t,\lambda).
\end{equation}
The significance of $ O(t,\lambda)$ is that \eqref{Htlambda} implies 
\begin{equation}
\mathcal{E}(t,\lambda) - \langle \phi_0, H \phi_0\rangle =
\braket{\phi_0, O(t,\lambda)\phi_0}, \label{eq:corr-fun}
\end{equation}
i.e., $O(t,\lambda)$ gives all nontrivial contributions to
$\mathcal{E}$. In the Hartree--Fock case, the right-hand side of
Eq.~\eqref{eq:corr-fun} is the correlation energy functional,
since the Hartree--Fock energy is given by $E_\text{HF} =
\langle \phi_0, H\phi_0\rangle$.

The idea is that if 
the reference $\phi_0$ is sufficiently
good, the mapping $(t,\lambda)\mapsto  O(t,\lambda)$ will be
well-behaved. In fact, since $ O(t,\lambda)$ is a (Fr\'echet-)smooth map,
it is locally Lipschitz: Given
$(t,\lambda)\in\mathcal{V}\times\mathcal{V}$, there exist
$\delta,L>0$ such that for all $(t_i,\lambda_i) \in B_\delta(t,\lambda)$,
\begin{equation*}
\| O(t_1,\lambda_1) -
O(t_2,\lambda_2)\|_{\mathcal B(\mathcal{H},\mathcal{H}')} \leq L \|(t_1-t_2,\lambda_1-\lambda_2)\|_{\mathcal{V}\times\mathcal{V}}.
\end{equation*}
In our case, we assume that $L$ is sufficiently small at
$(t_*,\lambda_*)$. This, in a sense, measures the smallness of $ W$.
\begin{assumption}\label{A2}
	Let $H =  F +  W$ and $\eta_0 := \gamma_0 c_F/(\gamma_0 + e_F + e_0)$. We assume the following:
	\begin{itemize}
		\item[(a)]
		$F : \mathcal{H}\to\mathcal{H}'$ is a one-body operator that satisfies the same conditions as
		$H$, i.e., it is symmetric, bounded, and satisfies a G{\aa}rding
		estimate (with constants $e_F,c_F$), as in
		Eqs.~(\ref{eq:bounded-operator}--\ref{eq:symmetric-operator}). The constant that bounds $F$ is denoted 
		$C_F$ and we set $C_0 := C_F + |e_0|$. 
		\item[(b)]
		$ F\phi_0 = e_0 \phi_0$ where
		$e_0$ is the smallest eigenvalue of $ F$. Eq. \eqref{gamma0} holds with a $\gamma_0>0$, i.e., there is a strictly positive
		HOMO-LUMO gap. 
		In particular, Lemma \ref{LemmaRoh} gives that \eqref{fock1} holds for all $\psi\in\mathcal Q$.
		\item[(c)]
		The Lipschitz constant $L$ at $(t_*,\lambda_*)$ and
		$\|\lambda_*\|_{\mathcal{V}}$ are not too large, so that that the
		following inequality holds:
		\begin{equation}
		\begin{split}
		0 <  \gamma &:= \eta_0 - \frac{1}{2}L\|\phi_0\|_{\mathcal{H}}\big(3 + K
		\|(e^{ \Lambda_*}-1)\phi_0\|_{\mathcal{H}} +
		\|e^{ \Lambda_*}\phi_0\|_{\mathcal{H}}/\|\phi_0\|_{\mathcal{H}}
		\\ & \quad+ 2\|e^{ \Lambda_*}\|_{\mathcal B(\mathcal{H})}\big) -C_0 \|e^{ \Lambda_*}-1\|_{\mathcal B(\mathcal{H})}.
		\label{eq:gammadef}
		\end{split}
		\end{equation}
		Here, $K$ is a constant such that $\| T\|_{\mathcal B(\mathcal{H})} \leq K
		\|t\|_{\mathcal{V}}$, which exists since the norms are
		equivalent. 
	\end{itemize}
\end{assumption}
\begin{remark}
	Assumption~\ref{A2}(c) does not assume that $\lambda_*$ is small
	compared to $\lambda_1-\lambda_2$. However,
	$\lambda_*$ (and $L$) cannot be too large, since then $\gamma$ eventually
	becomes negative. If we do assume that
	$\|\lambda_*\|_{\mathcal{V}} < \delta$, we obtain some
        simplifications, see Corollary~\ref{thm:tiny} below.
\end{remark}

\subsection{Proof of Monotonicity}
We set $\Delta :=\Delta_1+\Delta_2$, the left-hand side of Eq.~\eqref{Delta1212}. We then wish to prove
\begin{equation}
\label{Pf1}
\Delta \geq \gamma \big( \Vert t_1-t_2 \Vert_{\mathcal V}^2 + \Vert \lambda_1-\lambda_2 \Vert_{\mathcal V}^2\big)
\end{equation}
where $(t_i,\lambda_i ) \in B_\delta(t_*,\lambda_*)$ and $\gamma,\delta>0$.  To simplify notation we define ${\bar{T}} = ( T_1
+  T_2)/2$ and $\delta  T =  T_1 -  T_2$, and similarly ${ \bar{\Lambda}} = ( \Lambda_1 +  \Lambda_2)/2$ and
$\delta \Lambda = \Lambda_1 - \Lambda_2$. Consequently, we write $\Vert \delta t \Vert_{\mathcal V}$ and $\Vert \delta \lambda \Vert_{\mathcal V}$ for $\Vert t_1-t_2 \Vert_{\mathcal V}$ and $\Vert \lambda_1-\lambda_2 \Vert_{\mathcal V}$, respectively. 
\begin{theorem}\label{thm:taylormono}
	Assume that Assumption \ref{A1} holds. Then $\mathcal F$ is
	strongly monotone locally at $(t_*,\lambda_*)$, 
	$\mathcal{F}(t_*,\lambda_*)=0$, belonging to the ground-state
	energy $E_* = \mathcal{E}(t_*,\lambda_*)$. 	
\end{theorem}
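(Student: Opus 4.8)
The plan is to establish local strong monotonicity of $\mathcal F$ by Taylor-expanding the flipped gradient around the critical point $x_* := (t_*,\lambda_*)$ and reducing the estimate \eqref{Pf1} to coercivity of the flipped Hessian bilinear form of $\mathcal E$ at $x_*$. Local Lipschitz continuity (item (i)) is immediate from the smoothness of $\mathcal E$, so the content is item (ii). Since $\mathcal E$ is $\mathcal C^\infty$, the fundamental theorem of calculus along the segment $x_s := (t_2,\lambda_2) + s\,(\delta t,\delta\lambda)$, $s\in[0,1]$, gives
\[
\Delta = \int_0^1 \langle D\mathcal F(x_s)(\delta t,\delta\lambda),(\delta t,\delta\lambda)\rangle\,ds,
\]
where the pairing of $D_\lambda\mathcal E\in\mathcal V'$ with the $t$-direction $\delta t$ (and conversely) is well defined precisely because both variables inhabit the common space $\mathcal V$. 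It therefore suffices to bound the integrand below by $\gamma\,(\|\delta t\|_{\mathcal V}^2+\|\delta\lambda\|_{\mathcal V}^2)$ for all $x_s$ in a small ball $B_\delta(t_*,\lambda_*)$; by continuity of $D^2\mathcal E$ this follows once the bound holds at $x_*$ with a strictly positive constant, after which $\delta$ is shrunk to absorb the $O(\delta)$ variation of the Hessian over the segment.

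Next I would compute the flipped Hessian at $x_*$. Using \eqref{Feq}, the identities $\tfrac{d}{ds}H_{t+s\delta t}=[H_t,\delta T]$ and $e^{-\Lambda^\dagger}\phi_0=\phi_0$, direct differentiation gives
\[
\langle D\mathcal F(x_*)(\delta t,\delta\lambda),(\delta t,\delta\lambda)\rangle
= \langle \delta T\phi_0, e^{\Lambda_*^\dagger}[H_{t_*},\delta T]\phi_0\rangle
+ \langle \delta\Lambda\phi_0, e^{\Lambda_*^\dagger}[H_{t_*},\delta\Lambda]\phi_0\rangle
+ \langle \phi_0, e^{\Lambda_*^\dagger}[[H_{t_*},\delta\Lambda],\delta T]\phi_0\rangle .
\]
The pure $\lambda$-block $D_\lambda^2\mathcal E$ drops out because $H_{t_*}\phi_0=E_*\phi_0$ (Lemma \ref{SimSEs}(i)) and $(\delta\Lambda)^\dagger\phi_0=0$. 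In the first two terms I use $[H_{t_*},\delta T]\phi_0=(H_{t_*}-E_*)\delta T\phi_0$ (again Lemma \ref{SimSEs}(i), since $\delta T$ commutes with $T_*$) together with $H_{t_*}-E_*=e^{-T_*}(H-E_*)e^{T_*}$, and likewise for $\delta\Lambda$.

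The crucial observation is that in the reference limit $t_*=\lambda_*=0$ (so $\psi_*=\phi_0$, $H_{t_*}=H$, $e^{\Lambda_*^\dagger}=I$) the cross term vanishes identically, since each of its four monomials either contracts a (de-)excitation annihilating $\phi_0$ or produces $H\phi_0=E_*\phi_0$ paired against a strictly excited determinant, while the first two terms reduce to $\langle\delta T\phi_0,(H-E_*)\delta T\phi_0\rangle+\langle\delta\Lambda\phi_0,(H-E_*)\delta\Lambda\phi_0\rangle$. As $\delta T\phi_0,\delta\Lambda\phi_0\in\mathcal Q$ with $\|\delta T\phi_0\|_{\mathcal H}=\|\delta t\|_{\mathcal V}$ and $\|\delta\Lambda\phi_0\|_{\mathcal H}=\|\delta\lambda\|_{\mathcal V}$, Lemma \ref{LemmaRoh}(i), i.e. \eqref{EqRoh}, bounds these below by $\eta_\varepsilon\|\delta t\|_{\mathcal V}^2$ and $\eta_\varepsilon\|\delta\lambda\|_{\mathcal V}^2$. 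For small but nonzero $(t_*,\lambda_*)$ I would peel off the factors $e^{-T_*^\dagger}e^{\Lambda_*}-I$, $e^{T_*}-I$ and $e^{\Lambda_*}-I$ from these quadratic forms and from the now-nonzero cross term, estimating each correction in operator norm via $\|H-E_*\|_{\mathcal B(\mathcal H,\mathcal H')}\le C_*=C+|E_*|$ and the norm equivalence $\|T\|_{\mathcal B(\mathcal H)}\le K\|t\|_{\mathcal V}$. These collected corrections are exactly the quantities assembled in $b_*(t_*,\lambda_*)$, so the Hessian at $x_*$ is bounded below by $(\eta_\varepsilon-C_* b_*)(\|\delta t\|_{\mathcal V}^2+\|\delta\lambda\|_{\mathcal V}^2)$, which is strictly positive by Assumption \ref{A1}(b).

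I expect the main obstacle to be the bookkeeping of the cross term: unlike in the standard CC analysis of Rohwedder and Schneider, where $f$ is linear in the multiplier and only the $\delta T$ quadratic form appears, the nonlinearity of $\mathcal E$ in $\lambda$ forces $\langle\phi_0,e^{\Lambda_*^\dagger}[[H_{t_*},\delta\Lambda],\delta T]\phi_0\rangle$ into the estimate, and one must verify it carries no $O(1)$ contribution but is genuinely controlled by the smallness of $(t_*,\lambda_*)$ through $b_*$. Once strict positivity of the Hessian at $x_*$ is secured, continuity of $D^2\mathcal E$ lets me pick $\delta>0$ so small that the integrand above stays above $\gamma:=\tfrac12(\eta_\varepsilon-C_*b_*)>0$ throughout $B_\delta(t_*,\lambda_*)$; integrating in $s$ yields \eqref{Pf1} and hence local strong monotonicity of $\mathcal F$ at $(t_*,\lambda_*)$.
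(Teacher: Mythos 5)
Your proposal is correct and takes essentially the same route as the paper: your flipped-Hessian quadratic form at $(t_*,\lambda_*)$ is precisely the paper's second-order part $\tilde\Delta = \tilde\Delta_1+\tilde\Delta_{2,1}+\tilde\Delta_{2,2}$ (your cross term agrees with the paper's via the Jacobi identity, since $[\delta T,\delta\Lambda]=0$), bounded below by the same constant $\eta_\varepsilon - C_*\,b_*(t_*,\lambda_*)$ using Lemma~\ref{SimSEs}, Eq.~\eqref{EqRoh} and Assumption~\ref{A1}(b), with your integral mean-value representation plus continuity of $D^2\mathcal{E}$ merely repackaging the paper's explicit cubic-remainder bookkeeping. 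One cosmetic slip: the identity $[H_{t_*},\delta T]\phi_0=(H_{t_*}-E_*)\delta T\phi_0$ follows from $H_{t_*}\phi_0=E_*\phi_0$ alone, not from $\delta T$ commuting with $T_*$.
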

\begin{proof}
	Using the formulas~\eqref{Feq} for the
	partial derivatives, we obtain for the two terms in
	Eq.~\eqref{Delta1212},
	\begin{align*}
	\Delta_1 &= \langle \delta  T \phi_0, \big(e^{ \Lambda_1^\dagger} H_{t_1} - e^{ \Lambda_2^\dagger} H_{t_2} \big) \phi_0\rangle,\\
	\Delta_2 &= \langle \phi_0,\big( e^{ \Lambda_1^\dagger}[ H_{t_1},\delta  \Lambda] - 
	e^{ \Lambda_2^\dagger}[ H_{t_2},\delta \Lambda]\big)\phi_0 \rangle.
	\end{align*}
	Moreover, we make use of the following notation 
	$g_i := t_i - t_*$, $k_i := \lambda_i - \lambda_*$ and define the excitation operators
	$ G_i := \sum_{\mu }(g_i)_\mu  X_\mu$ and $ K_i := \sum_{\mu }(k_i)_\mu  X_\mu$. Also we write $\delta  G$ and $\delta  K$ as for $ T$ and $ \Lambda$, where of course $\delta G = \delta T$ and $\delta K = \delta \Lambda$. 
	As in \cite{Rohwedder2013b}, we note that the similarity transformed Hamiltonians $ H_{t_i}$ can be expanded in terms of $ H_{t_*}$ as
	\begin{equation}
	H_{t_i} = H_{t_*} + [ H_{t_*},  G_i] + \mathcal{O}(\Vert g_i\Vert_{\mathcal V}^2).
	\label{HstarTaylor}
	\end{equation}

	Let $\tilde \Delta$ be the second-order Taylor expansion of
	$\Delta$ around $(t_*,\lambda_*)$, i.e.,
	$
	\Delta = \tilde{\Delta}  + \mathcal O ( \Vert (\delta t ,\delta \lambda) \Vert_{\mathcal V \times \mathcal V}^3).
	$
	We will demonstrate the claim by first showing that $\tilde \Delta$ satisfies \eqref{Pf1} for some $\tilde \gamma>0$, using Assumption \ref{A1}.
	Now by \eqref{HstarTaylor} and $ \Lambda_i =    K_i+  \Lambda_*$, we see that
	\begin{align*}
	\Delta_1 &= \langle \delta  T\phi_0, \big( e^{ K_1^\dagger}e^{ \Lambda_*^\dagger}( H_{t_*}  +  [ H_{t_*},  G_1] +\mathcal{O}(\Vert g_1\Vert_{\mathcal V}^2) )\\
	&\quad - e^{ K_2^\dagger }e^{ \Lambda_*^\dagger}( H_{t_*}  +  [ H_{t_*},  G_2] + \mathcal{O}(\Vert g_2\Vert_{\mathcal V}^2))\big) \phi_0\rangle.
	\end{align*}
	With the aid of Lemma \ref{SimSEs} and since $ e^{ K_i^\dagger} \phi_0 =\phi_0$, it holds
	\begin{align*}
	&\Delta_1 = \langle \delta  T \phi_0, \big( e^{
          K_1^\dagger}e^{ \Lambda_*^\dagger}  [ H_{t_*},  G_1] - e^{
          K_2^\dagger }e^{ \Lambda_*^\dagger}  [ H_{t_*}, G_2] +
          \mathcal{O}(\Vert g_1\Vert_{\mathcal V}^2) +
          \mathcal{O}(\Vert g_2\Vert_{\mathcal V}^2)  \big) \phi_0\rangle.
	\end{align*}
	As a next step we truncate $e^{ K_i^\dagger} =  I + \mathcal O(\Vert k_i \Vert_{\mathcal V})$ and there holds
	\begin{align*}
	\Delta_1 
	&= \langle \delta T \phi_0, e^{ \Lambda_*^\dagger}[ H_{t_*},\delta  T]\phi_0\rangle
	+  \sum_{k=0}^3 \mathcal{O}(\Vert g_i\Vert_{\mathcal V}^k  \Vert k_i\Vert_{\mathcal V}^{3-k})\\
	&=\langle \delta T\phi_0, e^{\Lambda_*^\dagger}( H_{t_*} - E_*)\delta T\phi_0\rangle 
	+  \sum_{k=0}^3 \mathcal{O}(\Vert g_i\Vert_{\mathcal V}^k  \Vert k_i\Vert_{\mathcal V}^{3-k}). 
	\end{align*}
	Again we have made use of Lemma
        \ref{SimSEs}. Equation~\eqref{EqRoh} from Lemma
        \ref{LemmaRoh} and \eqref{eq:bounded-operator} give two useful bounds,
        \begin{align}
          \braket{\psi',(H-E_*)\psi} &\geq \eta_\varepsilon
          \|\psi\|^2_{\mathcal{H}} - C_* \|\psi'-\psi\|_{\mathcal{H}}
                                       \|\psi\|_{\mathcal{H}}, \label{eq:b1}
          \\
          \braket{\psi',(H-E_*)\psi} &\geq -C_*
                                       \|\psi'\|_{\mathcal{H}}\|\psi\|_{\mathcal{H}}. \label{eq:b2}
        \end{align}
        Using these,
	\begin{align*}
	\tilde \Delta_1 
	&= \langle \delta T\phi_0, e^{\Lambda_*^\dagger}( H_{t_*} - E_*)\delta T\phi_0\rangle \\
	& = \langle e^{-T_*^\dagger} e^{\Lambda_*} \delta T \phi_0, (H-E_*) \delta T \phi_0 \rangle + 
	\langle e^{-T_*^\dagger} e^{\Lambda_*} \delta T \phi_0, (H-E_*)(e^{T_*}-I) \delta T \phi_0 \rangle\\
	& \geq \eta_\varepsilon \Vert \delta T \phi_0 \Vert_{\mathcal H}^2 - C_* \Vert e^{-T_*^\dagger} e^{\Lambda_*} - I \Vert_{\mathcal B(\mathcal H)}\Vert \delta T \phi_0 \Vert_{\mathcal H}^2  \\
	& \quad - C_*\Vert e^{-T_*^\dagger} e^{\Lambda_*} \Vert_{\mathcal B(\mathcal H)}
	\Vert e^{T_*} -I \Vert_{\mathcal B(\mathcal H)}\Vert \delta T \phi_0 \Vert_{\mathcal H}^2\\
	&= \Vert \delta t \Vert_{\mathcal V}^2 \big(  \eta_\varepsilon - C_* (\Vert e^{-T_*^\dagger} e^{\Lambda_*} - I \Vert_{\mathcal B(\mathcal H)} + \Vert e^{-T_*^\dagger} e^{\Lambda_*} \Vert_{\mathcal B(\mathcal H)}
	\Vert e^{T_*} -I \Vert_{\mathcal B(\mathcal H)}  ) \big).
	\end{align*}

	Next, we look at $\Delta_2$. Proceeding in similar a fashion, we compute
	\begin{equation}
	\begin{split}
	\Delta_2 
	&= \langle \phi_0,( I +   K_1^\dagger + \mathcal O( \Vert k_1\Vert_{\mathcal V}^2  ))e^{ \Lambda_*^\dagger} 
	[ H_{t_*} + [ H_{t_*},  G_1] + \mathcal O ( \Vert g_1 \Vert_{\mathcal V}^2),\delta  \Lambda ] \phi_0\rangle  \\
	&\quad -\langle \phi_0,( I +   K_2^\dagger + \mathcal O( \Vert k_2\Vert_{\mathcal V}^2  ))e^{ \Lambda_*^\dagger} 
	[ H_{t_*} + [ H_{t_*},  G_2]+ \mathcal O ( \Vert g_2 \Vert_{\mathcal V}^2) ,\delta \Lambda] \phi_0\rangle  \\
	&= \langle \phi_0,  \delta \Lambda^\dagger e^{\Lambda_*^\dagger}( H_{t_*} -E_*)\delta \Lambda \phi_0\rangle 
	+ \langle \phi_0, e^{\Lambda_*^\dagger}\big[  [ H_{t_*},  \delta  T],  \delta  \Lambda \big]\phi_0\rangle \\
	& \quad + \sum_{k=0}^3 \mathcal{O}(\Vert g_i\Vert_{\mathcal V}^k  \Vert k_i\Vert_{\mathcal V}^{3-k})\\
	&=: \tilde \Delta_{2,1} +\tilde  \Delta_{2,2} +\sum_{k=0}^3 \mathcal{O}(\Vert g_i\Vert_{\mathcal V}^k  \Vert k_i\Vert_{\mathcal V}^{3-k}),
	\label{Pf2}
	\end{split}
	\end{equation}
	where the last equality defines $\tilde \Delta_{2,1}$ and $\tilde \Delta_{2,2}$. 
	For $\tilde \Delta_{2,1}$ in \eqref{Pf2}, we again employ
        Eqs.~\eqref{eq:b1} and \eqref{eq:b2} to obtain 
	\begin{align*}
	\tilde \Delta_{2,1} &=\langle \phi_0, \delta  \Lambda^\dagger e^{ \Lambda_*^\dagger}( H_{t_*} -E_*)\delta  \Lambda \phi_0\rangle \\
	&= \langle e^{-T_*^\dagger } e^{\Lambda_*} \delta  \Lambda \phi_0, ( H-E_*) \delta \Lambda \phi_0 \rangle +
	\langle e^{-T_*^\dagger } e^{\Lambda_*} \delta  \Lambda \phi_0, ( H-E_*)(e^{T_*} - I) \delta \Lambda \phi_0 \rangle \\
	& \geq \eta_\varepsilon \Vert \delta \Lambda \phi_0 \Vert_{\mathcal H}^2 - C_* \Vert e^{-T_*^\dagger} e^{\Lambda_*} - I \Vert_{\mathcal B(\mathcal H)}\Vert \delta \Lambda \phi_0 \Vert_{\mathcal H}^2  \\
	& \quad - C_*\Vert e^{-T_*^\dagger} e^{\Lambda_*} \Vert_{\mathcal B(\mathcal H)}
	\Vert e^{T_*} -I \Vert_{\mathcal B(\mathcal H)}\Vert \delta \Lambda \phi_0 \Vert_{\mathcal H}^2\\
	&= \Vert \delta \lambda  \Vert_{\mathcal V}^2 \big(  \eta_\varepsilon - C_* (\Vert e^{-T_*^\dagger} e^{\Lambda_*} - I \Vert_{\mathcal B(\mathcal H)} + \Vert e^{-T_*^\dagger} e^{\Lambda_*} \Vert_{\mathcal B(\mathcal H)}
	\Vert e^{T_*} -I \Vert_{\mathcal B(\mathcal H)}  ) \big).
	\end{align*} 
	Turning to $ \tilde \Delta_{2,2}$ in \eqref{Pf2}, we have by Lemma \ref{SimSEs}
	\begin{align*}
	\tilde \Delta_{2,2} &=    \langle \phi_0, e^{ \Lambda_*^\dagger}\big[  [ H_{t_*}, \delta  T],  \delta  \Lambda \big]\phi_0\rangle\\
	&= \langle e^{ \Lambda_*}\phi_0, \big( ( H_{t_*}\delta T - \delta T  H_{t_*} ) \delta  \Lambda   
	- \delta\Lambda  ( H_{t_*} \delta  T -	\delta  T  H_{t_*} ) \big) \phi_0\rangle\\
	&= \langle e^{ \Lambda_*}\phi_0, \big( \delta  T (E_* -  H_{t_*} )\delta \Lambda   
	- \delta \Lambda  ( H_{t_*} - E_*)\delta T  \big) \phi_0\rangle.
	\end{align*}
	Since 
	\[
	\big(\delta  T (E_* -  H_{t_*} )\delta \Lambda   
	- \delta \Lambda  ( H_{t_*} - E_*)\delta T \big) \phi_0 \in \mathcal Q,
	\]
	we only need to keep that part of $e^{ \Lambda_*}\phi_0$ that belongs to $\mathcal Q$. 
	Using Eq.~\eqref{eq:b2}, it holds that
	\begin{align*}
	\tilde \Delta_{2,2} &=  \langle e^{-T_*^\dagger }\delta  T^\dagger (e^{\Lambda_*}-I) \phi_0,  (E_* -  H ) e^{T_*}\delta \Lambda \phi_0 \rangle   \\ 
	& \quad +\langle e^{-T_*^\dagger }\delta \Lambda^\dagger (e^{\Lambda_*}-I) \phi_0,   (E_* -  H) e^{T_*} \delta T  \phi_0\rangle\\
	&\geq- 2C_*K \Vert \phi_0 \Vert_\mathcal{H} \Vert e^{ -T_*^\dagger}\Vert_{\mathcal B(\mathcal H)} \Vert e^{ T_*}\Vert_{\mathcal B(\mathcal H)}
	\Vert e^{ \Lambda_*} -I\Vert_{\mathcal B(\mathcal H)} 	\Vert \delta t \Vert_{\mathcal V} \Vert \delta  \lambda  \Vert_{\mathcal V} \\
	&\geq- C_*K \Vert \phi_0 \Vert_\mathcal{H} \Vert e^{ -T_*^\dagger}\Vert_{\mathcal B(\mathcal H)} \Vert e^{ T_*}\Vert_{\mathcal B(\mathcal H)}
	\Vert e^{ \Lambda_*} -I\Vert_{\mathcal B(\mathcal H)} 
	\big( \Vert \delta \lambda \Vert_{\mathcal V}^2 
	+ \Vert \delta  t  \Vert_{\mathcal V}^2  \big).
	\end{align*}

	To summarize, collecting the lower bounds for $\tilde \Delta_1$ and $\tilde \Delta_{2,i}$ we can now conclude by means of the definition given by \eqref{tlambdaCond}
	\begin{align*}
	\tilde{\Delta} \geq
	(\eta_\varepsilon - C_*b_* (t_*,\lambda_*) )  \big(  \Vert \delta t \Vert_\mathcal{V}^2+\Vert \delta \lambda \Vert_\mathcal{V}^2 \big).
	\end{align*}
	By Assumption 1, $\tilde \gamma := \eta_\varepsilon - C_*b_*(t_*,\lambda_*) >0$ such that 
	\begin{align}
	\tilde \Delta \geq  \tilde \gamma \big(  \Vert \delta t \Vert_\mathcal{V}^2+\Vert\delta \lambda \Vert_\mathcal{V}^2 \big),\quad \tilde \gamma>0,
	\label{tildeDelta}
	\end{align}
	holds. To conclude the proof, we just have to note that by \eqref{tildeDelta}
	\[
	\Delta \geq \tilde \gamma \big(  \Vert \delta t \Vert_\mathcal{V}^2+\Vert\delta \lambda \Vert_\mathcal{V}^2 \big) 
	+ \mathcal O ( \Vert (\delta t ,\delta \lambda) \Vert_{\mathcal V \times \mathcal V}^3) 
	\]  
	and by choosing $\delta$ sufficiently small there holds for some $\gamma\in(0,\tilde \gamma]$
	\[
	\Delta \geq  \gamma \big(  \Vert \delta t \Vert_\mathcal{V}^2+\Vert \delta \lambda \Vert_\mathcal{V}^2 \big) 
	\]
	for $(t_i,\lambda_i) \in B_{\delta}(t_*,\lambda_*)$.
\end{proof}

\begin{theorem}\label{thm:splittmono}
	Assume that Assumption \ref{A2} holds. Then $\mathcal F$ is
	strongly monotone locally at $(t_*,\lambda_*)$, 
	$\mathcal{F}(t_*,\lambda_*)=0$, belonging to the ground-state
	energy $E_* = \mathcal{E}(t_*,\lambda_*)$.
	\label{ThmHsplitt}
\end{theorem}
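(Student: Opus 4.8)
The plan is to follow the architecture of the proof of Theorem~\ref{thm:taylormono}, keeping the decomposition $\Delta=\Delta_1+\Delta_2$ with $\Delta_1,\Delta_2$ as in~\eqref{Delta1212}, but to extract coercivity from the one-body operator $F$ via Lemma~\ref{LemmaRoh}(ii), Eq.~\eqref{fock1}, rather than from the spectral gap of $H-E_*$. The enabling observation is that $e^{\Lambda^\dagger}H_t\phi_0=H_{t,\lambda}\phi_0$ together with $e^{\Lambda^\dagger}\phi_0=\phi_0$ turns the $\lambda$-gradient into $D_{\lambda_\mu}\mathcal E(t,\lambda)=\langle\phi_\mu,H_{t,\lambda}\phi_0\rangle$, so that $\Delta_1=\langle\delta T\phi_0,(H_{t_1,\lambda_1}-H_{t_2,\lambda_2})\phi_0\rangle$ can be handled exactly by subtracting two copies of the splitting representation~\eqref{Htlambda}.

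First I would dispatch $\Delta_1$. From~\eqref{Htlambda}, $H_{t_1,\lambda_1}-H_{t_2,\lambda_2}=[F,\delta T]+[\delta\Lambda^\dagger,F]+(O(t_1,\lambda_1)-O(t_2,\lambda_2))$. Acting on $\phi_0$ and using $F\phi_0=e_0\phi_0$ and $\delta\Lambda^\dagger\phi_0=0$, the term $[\delta\Lambda^\dagger,F]\phi_0$ vanishes and $[F,\delta T]\phi_0=(F-e_0)\delta T\phi_0$; since $\delta T\phi_0\in\mathcal Q$, Eq.~\eqref{fock1} gives the clean coercive contribution $\langle\delta T\phi_0,(F-e_0)\delta T\phi_0\rangle\ge\eta_0\|\delta t\|_{\mathcal V}^2$, while the $O$-difference is bounded by its Lipschitz constant as $L\|\phi_0\|_{\mathcal H}\|\delta t\|_{\mathcal V}\|(\delta t,\delta\lambda)\|_{\mathcal V\times\mathcal V}$.

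The heart of the proof is $\Delta_2=\langle\phi_0,(e^{\Lambda_1^\dagger}[H_{t_1},\delta\Lambda]-e^{\Lambda_2^\dagger}[H_{t_2},\delta\Lambda])\phi_0\rangle$. Conjugating each summand by $e^{\Lambda_i^\dagger}$ rewrites it as $\langle\phi_0,[H_{t_i,\lambda_i},\widetilde{\delta\Lambda}_i]\phi_0\rangle$ with $\widetilde{\delta\Lambda}_i=e^{\Lambda_i^\dagger}\delta\Lambda e^{-\Lambda_i^\dagger}$; the coercive part is sourced by the $\lambda$-variation of $H_{t,\lambda}$, which produces a second de-excitation factor $\delta\Lambda^\dagger$. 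Replacing $H_{t_*,\lambda_*}$ by its bare $F$-part through~\eqref{Htlambda} and using $F\phi_0=e_0\phi_0$, the resulting leading term collapses by the identity $\langle\phi_0,[[\delta\Lambda^\dagger,F],\delta\Lambda]\phi_0\rangle=\langle\delta\Lambda\phi_0,(F-e_0)\delta\Lambda\phi_0\rangle\ge\eta_0\|\delta\lambda\|_{\mathcal V}^2$, the analogue of the $\Delta_1$ estimate. Because Assumption~\ref{A2}(c) only caps $\lambda_*$ and does not make it small, the outer factor $e^{\Lambda_*^\dagger}$ cannot be set to the identity: splitting $e^{\Lambda_*^\dagger}=I+(e^{\Lambda_*^\dagger}-I)$ and estimating the remainder with the $F$-bound $C_0=C_F+|e_0|$ produces the subtraction $C_0\|e^{\Lambda_*}-1\|_{\mathcal B(\mathcal H)}$ in~\eqref{eq:gammadef}. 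The remaining contributions---the $W$-, $[F,T_*]$- and $O_*$-pieces of $\Delta_2$, together with the mixed commutator $\langle\phi_0,e^{\Lambda_*^\dagger}[[H_{t_*},\delta T],\delta\Lambda]\phi_0\rangle$ coupling $\delta t$ and $\delta\lambda$---must be regrouped into genuine differences $O(t_1,\lambda_1)-O(t_2,\lambda_2)$ and bounded by $L$ times the operator norms of $e^{\Lambda_*}$, $(e^{\Lambda_*}-1)\phi_0$ and $e^{\Lambda_*}\phi_0$, then reduced to $\|\delta t\|_{\mathcal V}^2+\|\delta\lambda\|_{\mathcal V}^2$ by Young's inequality; this accounts for the factor $\frac{1}{2}$ and the weights $3+K\|(e^{\Lambda_*}-1)\phi_0\|_{\mathcal H}+\|e^{\Lambda_*}\phi_0\|_{\mathcal H}/\|\phi_0\|_{\mathcal H}+2\|e^{\Lambda_*}\|_{\mathcal B(\mathcal H)}$ appearing in~\eqref{eq:gammadef}.

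Summing, $\Delta\ge\gamma(\|\delta t\|_{\mathcal V}^2+\|\delta\lambda\|_{\mathcal V}^2)$ with $\gamma$ exactly the constant in~\eqref{eq:gammadef}, which is positive by Assumption~\ref{A2}(c); shrinking $\delta$ absorbs any residual higher-order remainder and yields local strong monotonicity at $(t_*,\lambda_*)$, as claimed. The main obstacle is precisely the bookkeeping in $\Delta_2$: since none of $W$, $O_*$, or $\lambda_*$ is individually small, the correction terms cannot be bounded termwise but must be reassembled into Lipschitz-small $O$-differences, and the surviving $e^{\Lambda_*}$ factors tracked carefully---this is what forces the somewhat intricate expression for $\gamma$.
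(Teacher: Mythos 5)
Your architecture is the paper's own: the same decomposition $\Delta=\Delta_1+\Delta_2$; the identical treatment of $\Delta_1$ via the splitting \eqref{Htlambda}, the vanishing of $[\delta\Lambda^\dag,F]\phi_0$, the coercivity \eqref{fock1} of $F-e_0$ on $\mathcal Q$, and the Lipschitz bound on the $O$-difference; and for $\Delta_2$ the same three ingredients (an $F$-coercive term worth $\eta_0\|\delta\lambda\|_{\mathcal V}^2$, the correction $C_0\|e^{\Lambda_*}-1\|_{\mathcal B(\mathcal H)}$ coming from the outer exponential, and $L$-controlled cross terms combined by Young's inequality into the constant \eqref{eq:gammadef}). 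Your observation that the $W$-commutator is a ``Lipschitz-small $O$-difference'' is precisely the paper's identity $D O(t,0)(\delta\lambda)=[O(t,0)+W,\delta\Lambda]$, obtained from $O(t,0)+W=e^{-T}We^{T}$ and BCH, so that piece of your sketch is sound.

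There is, however, one genuine gap: your disposal of the $[F,T]$-pieces of $\Delta_2$. You claim the ``$[F,T_*]$-piece'' is, together with the $W$- and $O_*$-pieces, ``regrouped into genuine differences $O(t_1,\lambda_1)-O(t_2,\lambda_2)$ and bounded by $L$ times'' various norms. It cannot be: by the definition \eqref{Htlambda}, the term $[F,T]$ is exactly what is split off from $O(t,\lambda)$, so no $O$-difference contains it; and the resulting contributions $[[F,T_1],\delta\Lambda]$ (inside $e^{\Lambda_1^\dag}[H_{t_1},\delta\Lambda]$) and $[[F,\delta T],\delta\Lambda]$ (inside $[H_{t_1}-H_{t_2},\delta\Lambda]$) are genuinely second order in $(\delta t,\delta\lambda)$ with a coefficient of size $C_F$, which Assumption \ref{A2} does not make small. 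Were these terms merely estimated, \eqref{eq:gammadef} would acquire an extra subtraction of order $C_F$ (in the first case even proportional to $\|t_*\|_{\mathcal V}$), and the theorem with the stated constant would not follow. The paper instead kills them identically: since $[F,X_\mu]$ is again an excitation operator and excitation operators commute, $[[F,T],T']=0$ for any two cluster operators; this fact is used twice in Eq.~\eqref{eq:step0}. You do invoke the closely related collapse $\langle\phi_0,[[\delta\Lambda^\dag,F],\delta\Lambda]\phi_0\rangle=\langle\delta\Lambda\phi_0,(F-e_0)\delta\Lambda\phi_0\rangle$ for the coercive term, so the algebra is within reach, but the vanishing $[[F,T],T']=0$ is a distinct and indispensable ingredient that your regrouping mechanism does not replace. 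A minor further gloss: the paper derives its bound with $\bar\Lambda=(\Lambda_1+\Lambda_2)/2$ in place of $\Lambda_*$ and then Taylor-corrects $\tilde\gamma(\bar t,\bar\lambda)=\gamma+\mathcal O(\delta)$ (absorbing also the $K'\delta$ from the derivative bound) before shrinking $\delta$; your direct evaluation at $(t_*,\lambda_*)$ is repairable by exactly this device, so that point is harmless.
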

\begin{proof}
	As in the proof of Theorem \ref{thm:taylormono}, we study
        $\Delta_1$ and $\Delta_2$ separately before adding them
        together. We begin by noting that 
	\[
	\Delta_1 = \braket{\delta T\phi_0,( e^{\Lambda_1^\dagger}   H_{t_1} -
		e^{\Lambda_2^\dagger}   H_{t_2})\phi_0 } =
	\braket{\delta T\phi_0, ( H_{t_1,\lambda_1} -
		H_{t_2,\lambda_2})\phi_0},
	\] 
	because any de-excitation of the reference $\phi_0$ gives zero identically. Now, using Assumption~\ref{A2} 
	and the definition~\eqref{Htlambda} of the operator $O(t,\lambda)$ 	
	we immediately obtain the following lower bound for $\Delta_1$,
	\begin{equation*}
	\begin{split}
	\Delta_1 &= \braket{\delta T\phi_0, ( H_{t_1,\lambda_1} -
		H_{t_2,\lambda_2})\phi_0} \\
	& = \braket{\delta  T\phi_0, \big([ F, \delta  T] + [\delta \Lambda^\dag, F]
		+  O(t_1,\lambda_1) -  O(t_2,\lambda_2)\big) \phi_0} \\
	& = \braket{\delta T\phi_0, ( F-e_0)\delta  T\phi_0} +
	\braket{\delta  T\phi_0, ( O(t_1,\lambda_1) -
		O(t_2,\lambda_2))\phi_0} \\
	& \geq \eta_0 \|\delta  T\phi_0\|_{\mathcal H}^2 - L \|\delta  T\phi_0\|_{\mathcal H}\|(\delta
	t, \delta\lambda)\|_{\mathcal V \times \mathcal V} \|\phi_0\|_{\mathcal H} \\ 
	& = \eta_0 \|\delta t\|_{\mathcal V}^2 - L\|\phi_0\|_{\mathcal H} \|\delta t\|_{\mathcal V} (\|\delta
	t\|_{\mathcal V}^2 + \|\delta\lambda\|_{\mathcal V}^2)^{1/2} \\
	& \geq \eta_0 \|\delta t\|_{\mathcal V}^2 - L \|\phi_0\|_{\mathcal H}\|\delta t\|_{\mathcal V}(\|\delta
	t\|_{\mathcal V} + \|\delta\lambda\|_{\mathcal V}) .
	\end{split} \label{eq:Delta1}
	\end{equation*}

	We next turn to $\Delta_2$. It holds,
	\begin{equation}
	e^{\Lambda_1} - e^{\Lambda_2} = e^{{\bar{\Lambda}}} \delta \Lambda +
	\mathcal O(\|\delta\lambda\|_{\mathcal V}^2).
	\end{equation}
        We compute	
	\begin{equation}
	\begin{split}
	\Delta_2 &= \braket{\phi_0, \big(e^{ \Lambda_1^\dag}
		[ H_{t_1},\delta  \Lambda] - e^{ \Lambda_2^\dag}
		[ H_{t_2}\delta  \Lambda] \big)\phi_0} \\
	& = \braket{\phi_0, \big( (e^{ \Lambda_1^\dagger} -
		e^{ \Lambda_2^\dagger})[ H_{t_1},\delta  \Lambda] + e^{ \Lambda_2^\dag}[ H_{t_1} -
		H_{t_2}, \delta \Lambda]\big)\phi_0} \\
	& = \braket{\phi_0, \big( e^{{\bar{\Lambda}}^\dag}\delta\Lambda^\dag [ F + W+
		O(t_1,0),\delta \Lambda] - e^{{\bar{\Lambda}}^\dag}[ O(t_1,0) -
		O(t_2,0),\delta\Lambda] \big) \phi_0} \\
	&\quad+ \mathcal O(\|\delta\lambda\|_{\mathcal V}^3) +
	\mathcal O(\|\delta\lambda\|_{\mathcal V}\|\delta t\|_{\mathcal V}^2) + \mathcal O(\|\delta\lambda \|_{\mathcal V}^2 \|\delta t\|_{\mathcal V}).
	\end{split}\label{eq:step0}
	\end{equation}
	In the last equality, we exploited that the second-order nested
	commutator of $ F$ with two excitation operators
	vanishes. This so since for $\mu\neq 0$ we have that
	$[F,X_\mu]$ is an excitation operator and consequently $[[F,T],T']=0$. Moreover, we
	used that $ O(t_1,0)-  O(t_2,0) = \mathcal O(\|\delta t\|)$, allowing us to replace
	$\Lambda_2$ with ${\bar{\Lambda}} = \Lambda_2 +
	\delta\Lambda/2$, a change which only affects the higher-order terms.

	Define $\tilde{\Delta}_2$ as the leading second-order
	term of $\Delta_2$, i.e., the first term in the last line of Eq.~\eqref{eq:step0}, neglecting the
	third-order remainders (note that these are in total $\mathcal O(\|(\delta
	t,\delta\lambda)\|_{\mathcal V \times \mathcal V}^3)$). We will start by finding $\tilde\gamma > 0$ such that
	\begin{equation*}
	{\Delta}_1 + \tilde{\Delta}_2 \geq \tilde\gamma (\|\delta t\|_{\mathcal V}^2 +
	\|\delta\lambda\|_{\mathcal V}^2).
	\end{equation*}
	We split $\tilde{\Delta}_2$ into two contributions, $\tilde \Delta_{2,i}$,
	$i=1,2$.

	Since $O(t,0) + W= e^{-T}We^{T}$, the BCH formula gives,
	\begin{equation*}
	O(t + \delta\lambda,0) - O(t,0) = [ O(t,0) + W,\delta\Lambda]
	+ \mathcal O(\|\delta\lambda\|^2).
      \end{equation*}
      This gives us the directional derivative of $O(\cdot,0)$ in the
        direction $\delta \lambda$, 
	\[
	D  O(t,0)(\delta\lambda) = [O(t,0) + W,\delta\Lambda].
	\]
	On the other hand, $O$ is Lipschitz, so
	that
	\begin{equation*}
	\|[ O(t_1,0) + W,\delta  \Lambda]\|_{\mathcal B(\mathcal{H},\mathcal{H}')} \leq
	\|D  O(t_1,0)\|_{\mathcal B(\mathcal{V},B(\mathcal{H},\mathcal{H}'))}
	\|\delta\lambda\|_{\mathcal{V}} \leq ( L + K'\delta) \|\delta\lambda\|_{\mathcal{V}},
	\end{equation*}
	for some constant $K'$.
	
        A useful bound is obtained from Eq.~\eqref{fock1} from
        Lemma~\ref{LemmaRoh},
        \begin{equation}
          \braket{\psi',(F-e_0)\psi} \geq \eta_0
          \|\psi\|^2_{\mathcal{H}} - C_0 \|\psi'-\psi\|_{\mathcal{H}}
          \|\psi\|_{\mathcal{H}}. \label{eq:b3}
        \end{equation}
        The first contribution becomes
	\begin{equation*}
	\begin{split}
	\tilde \Delta_{2,1} &= \braket{\phi_0, e^{{\bar{\Lambda}}^\dag}
		\delta \Lambda^\dag [ F , \delta  \Lambda]\phi_0} + \braket{\phi_0, e^{{\bar{\Lambda}}^\dag}
		\delta \Lambda^\dag [  O(t_1,0)+  W, \delta \Lambda]\phi_0} \\
	& = \braket{e^{{\bar{\Lambda}}}\delta \Lambda\phi_0,
		( F-e_0)\delta \Lambda\phi_0}  + \braket{\delta \Lambda e^{{\bar{\Lambda}}}\phi_0,  [  O(t_1,0) + W, \delta \Lambda]\phi_0} \\
	& \geq \eta_0 \|\delta \Lambda\phi_0\|_{\mathcal H}^2 -
	C_0\|(e^{{\bar{\Lambda}}}-1)\delta \Lambda\phi_0\|_{\mathcal H}\|\delta \Lambda\phi_0\|_{\mathcal H}
	\\
	& \qquad -
	\|e^{{\bar{\Lambda}}}\delta\Lambda \phi_0\|_{\mathcal{H}} (L +
	K'\delta) \|\delta\lambda\|_{\mathcal{V}} \|\phi_0\|_{\mathcal{H}}\\
	& \geq \big( \eta_0  - C_0\|e^{{\bar{\Lambda}}}-1\|_{\mathcal B(\mathcal H)} - (L + K'\delta)\|\phi_0\|_{\mathcal{H}}
	\|e^{{\bar{\Lambda}}}\|_{\mathcal B(\mathcal{H})}\big) \|\delta\lambda\|^2_{\mathcal{V}}.
	\end{split}
	\end{equation*}
	The second contribution is
	\begin{equation*}
	\begin{split}
	\tilde \Delta_{2,2} &= \braket{\phi_0, e^{{\bar{\Lambda}}^\dag} [ O(t_1,0) -
		O(t_2,0),\delta\Lambda] \phi_0} \\ 
	&= \braket{e^{{\bar{\Lambda}}}\phi_0,
		( O(t_1,0)- O(t_2,0))\delta \Lambda \phi_0}- \braket{e^{{\bar{\Lambda}}}\phi_0,
		\delta\Lambda ( O(t_1,0)- O(t_2,0))\phi_0 }  \\
	& \geq  - L \|e^{{\bar{\Lambda}}}\phi_0\|_{\mathcal H}\|\delta\lambda\|_{\mathcal V}\|\delta t\|_{\mathcal V}
	-
	L\|\delta\Lambda^\dag(e^{{\bar{\Lambda}}}-1)\phi_0\|_{\mathcal H}\|\phi_0\|_{\mathcal H}\|\delta
	t\|_{\mathcal V}\\
	& \geq  - L \|e^{{\bar{\Lambda}}}\phi_0\|_{\mathcal H}\|\delta\lambda\|_{\mathcal V}\|\delta t\|_{\mathcal V}
	-LK\|(e^{{\bar{\Lambda}}}-1)\phi_0\|_{\mathcal H}\|\phi_0\|_{\mathcal H}\|\delta\lambda\|_{\mathcal V}\|\delta
	t\|_{\mathcal V} \\
	& = -L (K\|(e^{{\bar{\Lambda}}}-1)\phi_0\|_{\mathcal H} + \|e^{{\bar{\Lambda}}}\phi_0\|_{\mathcal H}/\|\phi_0\|_{\mathcal H}) \|\phi_0\|_{\mathcal H}\|\delta\lambda\|_{\mathcal V}\|\delta t\|_{\mathcal V}.
	\end{split}
	\end{equation*}
	We gather and obtain, 
	\begin{equation*}
	\begin{split}
	\Delta_1 + \tilde{\Delta}_2  &\geq \eta_0\|\delta t\|_{\mathcal V}^2 -
	L\|\phi_0\|_{\mathcal H}\|\delta t\|_{\mathcal V}(\|\delta t\|_{\mathcal V} + \|\delta\lambda\|_{\mathcal V})
	\\
	& \quad + \big( \eta_0
	- \| F-e_0\|_{\mathcal B (\mathcal H,\mathcal H')}\|e^{{\bar{\Lambda}}}-1\|_{\mathcal B (\mathcal H)} - (L + K'\delta)\|\phi_0\|_{\mathcal{H}}\|e^{{\bar{\Lambda}}}\|_{\mathcal B(\mathcal{H})}\big)\|\delta\lambda\|_{\mathcal V}^2 \\
	& \quad - L(K\|(e^{{\bar{\Lambda}}}-1)\phi_0\|_{\mathcal H} +
	\|e^{{\bar{\Lambda}}}\phi_0\|_{\mathcal H}/\|\phi_0\|_{\mathcal H}) \|\phi_0\|_{\mathcal H}\|\delta\lambda\|_{\mathcal V}\|\delta t\|_{\mathcal V} \\
	& \geq \big(\eta_0 - \frac{1}{2}L\|\phi_0\|_{\mathcal H}\big(3 + K
	\|(e^{{\bar{\Lambda}}}-1)\phi_0\|_{\mathcal H} +
	\|e^{{\bar{\Lambda}}}\phi_0\|_{\mathcal H}/\|\phi_0\|_{\mathcal H} + 2\|e^{{\bar{\Lambda}}}\|_{\mathcal B (\mathcal H)}\big)\\
	& 
	\quad -\| F-e_0\|_{\mathcal B ( \mathcal H, \mathcal H')}\|e^{{\bar{\Lambda}}}-1\|_{\mathcal B (\mathcal H)}   
	\big) \|(\delta t, \delta\lambda)\|_{\mathcal V \times \mathcal V}^2 
	\\
	&\quad - K'\delta \|\phi_0\|_{\mathcal H}
	\|e^{{\bar{\Lambda}}}\|_{\mathcal B (\mathcal H)}\|(\delta t, \delta\lambda)\|_{\mathcal V \times \mathcal V}^2 \\
	&  = : \tilde{\gamma}(\bar{t},\bar{\lambda}) \|(\delta t, \delta\lambda)\|_{\mathcal V \times \mathcal V}^2.
	\end{split}
	\end{equation*}
	We now note that, by Taylor's Theorem, $\tilde{\gamma}(\bar{t},\bar{\lambda}) =
	\gamma + \varepsilon(\bar{t},\bar{\lambda}) - K'\delta$, with $\gamma =
	\tilde{\gamma}(t_*,\lambda_*)>0$ by Eq.~\eqref{eq:gammadef} in Assumption \ref{A2},
	and $|\varepsilon| \leq C\delta$ for some $C\geq 0$. Thus, 
	\begin{equation*}
	\Delta_1 + \tilde{\Delta}_2 \geq
	(\gamma - (C+K')\delta)\|(\delta t, \delta\lambda)\|_{\mathcal V \times \mathcal V}^2. 
	\end{equation*}
	Finally,
	\begin{equation*}
	\Delta_1 + \Delta_2 \geq   (\gamma - (C+K')\delta)\|(\delta t,
	\delta\lambda)\|_{\mathcal V \times \mathcal V}^2 +\mathcal  O(\|(\delta t,\delta\lambda)\|_{\mathcal V \times \mathcal V}^3).
	\end{equation*}
	Since the third-order term cannot beat the second order term, by
	shrinking $\delta$, we get
	\begin{equation*}
	\Delta_1 + \Delta_2 \geq   (\gamma - (C+K')\delta')\|(t_1-t_2,
	\lambda_1-\lambda_2)\|_{\mathcal V \times \mathcal V}^2
	\end{equation*}
	whenever $(t_i, \lambda_i) \in B_{\delta'}(t_*,\lambda_*)$.
\end{proof}

\begin{corollary}\label{thm:tiny}
	Assume Assumption~\ref{A2}(a--b) holds, and additionally that we have 
	$\|\lambda_*\|_{\mathcal{V}}<\delta$. Also, assume that
	\begin{equation}
	0 <  \eta_0 - 3L\|\phi_0\|_{\mathcal{H}} . \label{eq:gammatiny}
	\end{equation}
	Then $\mathcal{F}$ is locally strongly monotone at the root
	$(t_*,\lambda_*)$ belonging to the ground-state energy.
\end{corollary}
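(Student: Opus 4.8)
The plan is to deduce the corollary directly from Theorem~\ref{thm:splittmono} by showing that the two extra hypotheses force the constant $\gamma$ defined in Eq.~\eqref{eq:gammadef} to be strictly positive, so that Assumption~\ref{A2}(c) is automatically satisfied. Since Assumption~\ref{A2}(a--b) is assumed outright, the only remaining task is to verify this sign condition; once it holds, the conclusion is exactly that of Theorem~\ref{thm:splittmono}. The key point to exploit is that the right-hand side of Eq.~\eqref{eq:gammadef} depends on the solution only through $\lambda_*$ (via $\Lambda_*$), the remaining quantities $\eta_0$, $L$, $K$, $C_0$ and $\|\phi_0\|_{\mathcal H}$ being fixed constants.

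First I would use the norm equivalence $\|\Lambda_*\|_{\mathcal B(\mathcal H)} \le K\|\lambda_*\|_{\mathcal V}$ together with $\|\lambda_*\|_{\mathcal V} < \delta$, so that $\|\Lambda_*\|_{\mathcal B(\mathcal H)} < K\delta$; hence $\Lambda_*$ is small in operator norm whenever $\delta$ is small. Continuity of the exponential map then yields, as $\delta \to 0+$, the limits $\|e^{\Lambda_*}-1\|_{\mathcal B(\mathcal H)} \to 0$, $\|(e^{\Lambda_*}-1)\phi_0\|_{\mathcal H} \to 0$, $\|e^{\Lambda_*}\phi_0\|_{\mathcal H}/\|\phi_0\|_{\mathcal H} \to 1$, and $\|e^{\Lambda_*}\|_{\mathcal B(\mathcal H)} \to 1$. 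Substituting these into Eq.~\eqref{eq:gammadef}, the four $\lambda_*$-dependent factors in the parenthesized expression make it tend to $3 + 0 + 1 + 2 = 6$, while the final term $C_0\|e^{\Lambda_*}-1\|_{\mathcal B(\mathcal H)}$ vanishes, so that
\[
\gamma \longrightarrow \eta_0 - \frac{1}{2}L\|\phi_0\|_{\mathcal H}\cdot 6 = \eta_0 - 3L\|\phi_0\|_{\mathcal H}
\]
as $\delta \to 0+$. This shows that the right-hand side of Eq.~\eqref{eq:gammatiny} is precisely the $\lambda_*\to 0$ specialization of the general threshold in Eq.~\eqref{eq:gammadef}.

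Finally I would invoke continuity once more: the right-hand side of Eq.~\eqref{eq:gammadef} is a continuous function of $\lambda_*$, whose value at $\lambda_*=0$ equals the right-hand side of Eq.~\eqref{eq:gammatiny}, which is strictly positive by hypothesis. Hence there exists $\delta_0>0$ such that $\|\lambda_*\|_{\mathcal V} < \delta_0$ already forces $\gamma>0$, i.e.\ Assumption~\ref{A2}(c) holds (and since strong monotonicity is a local statement, the ball radius may be shrunk freely, so no conflict with the given $\delta$ arises). Theorem~\ref{thm:splittmono} then delivers local strong monotonicity of $\mathcal F$ at $(t_*,\lambda_*)$. The argument is thus essentially a continuity/limit computation, and the only point requiring care is the bookkeeping of the four $\lambda_*$-dependent factors so that the limiting constant collapses to exactly $\eta_0 - 3L\|\phi_0\|_{\mathcal H}$; I expect no genuine obstacle beyond this verification.
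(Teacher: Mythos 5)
Your proposal is correct and follows essentially the same route as the paper: the paper's (very terse) proof likewise observes that setting $\lambda_*=0$ in Eq.~\eqref{eq:gammadef} --- i.e., a zeroth-order Taylor expansion of $\tilde\gamma(t_*,\lambda_*)$ --- collapses the parenthesized factor to $3+0+1+2=6$ and kills the $C_0$ term, yielding exactly Eq.~\eqref{eq:gammatiny}, with smallness of $\|\lambda_*\|_{\mathcal V}$ and continuity absorbing the $O(\delta)$ corrections. Your extra bookkeeping (norm equivalence, continuity of the exponential, shrinking the ball radius) merely makes explicit what the paper leaves to the reader, so there is no gap and no genuinely different method.
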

\begin{proof} It is enough to observe that we need to Taylor expand
	$\gamma = \tilde{\gamma}(t_*,\lambda_*)$ to zeroth order, i.e.,
	setting $\lambda_*=0$ in Eq.~\eqref{eq:gammadef}. The
	reader can readily verify that this gives Eq.~\eqref{eq:gammatiny}.
\end{proof}

\subsection{Existence, uniqueness, truncations and error estimates}

Having obtained sufficient conditions for $\mathcal{F}$ to be locally
strongly monotone at $(t_*,\lambda_*)$, we can now apply the local
version of Zarantonello's theorem, Theorem~\ref{fCCthm}, to obtain existence and
local uniqueness of solutions, also for truncated versions of ECC.

In our setting, a (family of) truncated amplitude spaces
$\mathcal{V}_d\times\mathcal{V}_d$ is such that if we let the
dimension $d\to +\infty$, we can approximate $(t_*,\lambda_*)$
arbitrarily well. Of course, the usual truncation scheme defined by
all excitations up to a given excitation level and additionally the
restriction to a finite set of virtual orbitals, conforms with
this. In the sequel it will be assumed that $\mathcal V_d$ is closed in $\mathcal V$. 

The truncated ECC functional is the restriction $\mathcal{E}_d :
\mathcal{V}_d \times \mathcal{V}_d \to \RR$ of $\mathcal{E}$, giving the critical point
problem $D \mathcal{E}_d=0$, i.e.,
\begin{equation*}
\text{find $(t_d,\lambda_d) \in \mathcal{V}_d\times\mathcal{V}_d$}	
\quad \text{such that} \quad \frac{\partial \mathcal{E}(t_d,\lambda_d)}{\partial t_\mu} =
\frac{\partial \mathcal{E}(t_d,\lambda_d)}{\partial \lambda_\mu} = 0,
\end{equation*}
where $t_\mu$ ($\lambda_\mu$) are the components of $t \in
\mathcal{V}_d$ ($\lambda \in \mathcal{V}_d$) in some arbitrary
orthonormal basis. Since the flipping map in Eq.~\eqref{eq:flipping} commutes with projection
onto $\mathcal{V}_d\times\mathcal{V}_d$, the truncated ECC equations
can be written $ \mathcal{F}_d(t_d,\lambda_d) = 0$.

While stated as a theorem, our main result is really a corollary of
Theorems~\ref{thm:taylormono} and~\ref{ThmHsplitt}, and an elementary
application of 
Theorem~\ref{fCCthm}. The only point to check is that $\mathcal{F}$
is locally Lipschitz. However, $\mathcal{F}$ is (in fact infinitely)
continuously differentiable in the Fr\'echet sense. Such functions are
always locally Lipschitz.

\begin{theorem}
	\label{UandE}
	Assume that Assumption \ref{A1} or \ref{A2} holds such that
	$\mathcal F$ is locally strong\-ly monotone (with constant
	$\gamma$) on $B_{\delta} (t_*,\lambda_*)$, for
	some $\delta > 0$. Here, $(t_*,\lambda_*)$ is the root of
	$\mathcal{F}$ belonging to the ground-state energy. Furthermore, let
	$L$ be the local Lipschitz constant of $\mathcal{F}$ at $(t_*,\lambda_*)$.
	
	\begin{enumerate}
		\item[(i)] 
		The solution $(t_*,\lambda_*)$ of the continuous ECC equation
		$D\mathcal{E}(t,\lambda) =0$ on $\mathcal V \times \mathcal V$
		is locally unique.
		\item[(ii)]
		For sufficiently large $d$, the projected ECC problem $D\mathcal{E}_d(t,\lambda)=0$ has a unique
		solution $(t_d,\lambda_d)$ in the neighborhood
		$B_\delta(t_*,\lambda_*)\cap (\mathcal V_d\times \mathcal
		V_d)$. 
		The truncated solution $(t_d,\lambda_d)$ satisfies the
		estimate
		\begin{align}
		\Vert (t_d,\lambda_d) - (t_*,\lambda_*)\Vert_{\mathcal V\times \mathcal V}
		\leq \frac{L}{\gamma} d(\mathcal V_d\times \mathcal V_d,(t_*,\lambda_*)).
		\label{Est}
		\end{align}
	\end{enumerate}
\end{theorem}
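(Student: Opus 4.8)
The plan is to read Theorem~\ref{UandE} as a direct application of the local Zarantonello theorem (Theorem~\ref{thm:zarantonello}) to the flipped gradient $\mathcal{F}$, with the choices $X = \mathcal{V}\times\mathcal{V}$ (equipped with the product norm), $f = \mathcal{F}$, and $x_* = (t_*,\lambda_*)$. Two hypotheses must be secured on a common ball $B_\delta(t_*,\lambda_*)$: local Lipschitz continuity and local strong monotonicity. The monotonicity is exactly the conclusion of Theorem~\ref{thm:taylormono} (under Assumption~\ref{A1}) or Theorem~\ref{ThmHsplitt} (under Assumption~\ref{A2}), which supply a constant $\gamma>0$ and a radius $\delta>0$. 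For the Lipschitz bound I would argue that $\mathcal{F}$, being the componentwise flip of the Fr\'echet derivative $D\mathcal{E}$ of the smooth functional~\eqref{eq:eccfun}, is itself $\mathcal{C}^\infty$; a continuously differentiable map between Banach spaces is automatically locally Lipschitz, with local constant $L$ controlled by $\sup_{B_\delta}\|D\mathcal{F}\|$ (shrinking $\delta$ if necessary so both hypotheses hold on the same ball).

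For part~(i), I would invoke conclusion~1) of Theorem~\ref{thm:zarantonello}: strong monotonicity together with Lipschitz continuity force $(t_*,\lambda_*)$ to be the unique zero of $\mathcal{F}$ in $B_\delta(t_*,\lambda_*)$. Since the component-flip in~\eqref{eq:flipping} is a bijection, the zero set of $\mathcal{F}$ coincides with that of $D\mathcal{E}$; hence $(t_*,\lambda_*)$ is the unique critical point of $\mathcal{E}$ in this neighborhood, which is assertion~(i).

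For part~(ii), I would first use the observation recorded before the statement: because $\mathcal{V}_d\times\mathcal{V}_d$ truncates both slots by the \emph{same} subspace $\mathcal{V}_d$, the block-diagonal projection commutes with the flip of~\eqref{eq:flipping}, so the truncated stationarity condition $D\mathcal{E}_d=0$ is precisely the projected equation $\mathcal{F}_d(t_d,\lambda_d)=0$. Next I would note that restricting $\mathcal{F}$ to the closed subspace $X_d=\mathcal{V}_d\times\mathcal{V}_d$ and projecting its image into $X_d'$ inherits both constants: for $x_1,x_2\in X_d$ the increment $x_1-x_2$ lies in $X_d$, so the dual pairing with the projected image equals the pairing with $\mathcal{F}(x_1)-\mathcal{F}(x_2)$, preserving the monotonicity inequality verbatim, while the dual restriction can only decrease the norm, preserving Lipschitz continuity. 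Applying conclusion~2) of Theorem~\ref{thm:zarantonello} with $X_d=\mathcal{V}_d\times\mathcal{V}_d$ then yields, for $d$ large enough that $d(\mathcal{V}_d\times\mathcal{V}_d,(t_*,\lambda_*))$ is small, a unique solution $(t_d,\lambda_d)\in B_\delta(t_*,\lambda_*)\cap(\mathcal{V}_d\times\mathcal{V}_d)$ together with the quasi-optimality estimate~\eqref{Est}.

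There is no genuinely hard step here, since the substantive analytic work (the monotonicity estimates) was completed in Theorems~\ref{thm:taylormono} and~\ref{ThmHsplitt}. The only points needing care are verifying that the truncated critical-point problem really is the projected root equation $\mathcal{F}_d=0$ --- which hinges on the flip commuting with the \emph{symmetric} truncation of both components --- and checking that strong monotonicity descends to $X_d$. Both are routine once the matching of $\mathcal{V}_d$ across the two components is noted.
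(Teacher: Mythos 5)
Your proposal is correct and takes essentially the same route as the paper, which likewise treats Theorem~\ref{UandE} as a direct corollary of Theorems~\ref{thm:taylormono} and~\ref{ThmHsplitt} together with the local Zarantonello result (Theorem~\ref{fCCthm}), using exactly your two observations: $\mathcal{F}$ is locally Lipschitz because it is Fr\'echet-$\mathcal{C}^\infty$, and the flip in Eq.~\eqref{eq:flipping} commutes with the symmetric projection onto $\mathcal{V}_d\times\mathcal{V}_d$, so that $D\mathcal{E}_d=0$ is precisely $\mathcal{F}_d=0$. Your extra verification that strong monotonicity descends to the closed subspace $\mathcal{V}_d\times\mathcal{V}_d$ is sound but already subsumed in part~2) of Theorem~\ref{thm:zarantonello} as stated.
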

\begin{remark}
	(i) The local uniqueness is also a direct consequence of the assumption that the ground state is non-degenerate and
	Lemma~\ref{lemma:ECCpara}.
	
	(ii) By the definition of the norm on $\mathcal V \times \mathcal V$, \eqref{Est} implies
	\begin{equation}
	\label{Est2}
	\Vert t_d - t_*\Vert_{\mathcal V}^2 + \Vert \lambda_d - \lambda_*\Vert_{\mathcal V}^2
	\leq \frac{L^2}{\gamma^2} \big( d(\mathcal V_d,t_*)^2 + d(\mathcal V_d,\lambda_*)^2 \big),
	\end{equation}
	and furthermore that $(t_d,\lambda_d) \to (t_*,\lambda_*)$ as $d\to +\infty$.	
\end{remark}

Theorem~\ref{UandE} guarantees that for sufficiently large discrete
amplitude spaces $\mathcal{V}_d$, the ECC equations actually have locally
unique solutions that approximate the exact solution. However, we
do not yet know what ``sufficiently large'' means.

By slightly adapting the proof of Theorem 4.1 in
Ref.~\cite{Rohwedder2013b}, we can obtain a sufficient condition on
$\mathcal{V}_d$. This argument rests on Brouwer's fixed point theorem:
any continuous function of a closed ball in $\RR^n$ into itself has a
fixed point. Here, we employ 
a version of
this result \cite{Emmrich2004}.
\begin{lemma}\label{thm:brouwer}
	Equip $\RR^n$ with any norm $\|\cdot\|_n$, and let $B_R$
	be the closed ball of radius $R$ centered at $\vec{x}=0$. Let
	$h : B_{R} \to \mathbb R^n$ be continuous and assume that on the
	boundary of $B_R$, $\langle h(\vec x),\vec x\rangle = h(\vec x) \cdot \vec x\geq 0$. Then
	$h(\vec x)=0$ for some $\vec x\in B_R$.
\end{lemma}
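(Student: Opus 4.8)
The plan is to argue by contradiction and reduce the statement to Brouwer's fixed point theorem applied to a suitable continuous self-map of $B_R$. First I would suppose, contrary to the claim, that $h(\vec x) \neq 0$ for every $\vec x \in B_R$. Since $h$ is continuous and nowhere zero on $B_R$, the map
\[
g(\vec x) := -R\,\frac{h(\vec x)}{\|h(\vec x)\|_n}
\]
is well-defined and continuous on $B_R$, and satisfies $\|g(\vec x)\|_n = R$. In particular $g$ maps the closed ball $B_R$ into itself (indeed onto its boundary sphere).

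Next I would invoke Brouwer's fixed point theorem, in the form valid for the ball of an arbitrary norm on $\RR^n$ as recorded in \cite{Emmrich2004}, to produce a fixed point $\vec x_0 = g(\vec x_0) \in B_R$. Because $\|g(\vec x_0)\|_n = R$, the fixed point lies on the boundary of $B_R$, so the hypothesis $h(\vec x_0)\cdot \vec x_0 \geq 0$ is available there. Substituting $\vec x_0 = g(\vec x_0)$ and using the definition of $g$ gives
\[
h(\vec x_0)\cdot \vec x_0 = h(\vec x_0)\cdot g(\vec x_0) = -\frac{R}{\|h(\vec x_0)\|_n}\,\big(h(\vec x_0)\cdot h(\vec x_0)\big) < 0,
\]
where the final strict inequality uses $R>0$, $\|h(\vec x_0)\|_n>0$, and the positivity of $h(\vec x_0)\cdot h(\vec x_0)$ for $h(\vec x_0)\neq 0$. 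This contradicts the boundary sign condition, so the assumption that $h$ is nonvanishing on $B_R$ is untenable, and $h$ must vanish at some point of $B_R$.

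The computation and the contradiction are entirely routine; the only point that warrants care is the applicability of Brouwer's theorem when $B_R$ is the ball of an arbitrary norm $\|\cdot\|_n$ rather than the Euclidean ball. This causes no difficulty: all norms on $\RR^n$ are equivalent, so $B_R$ is a compact convex body homeomorphic to the Euclidean closed ball, and Brouwer's theorem is a topological statement invariant under such homeomorphisms, which is precisely why the normed-space version of \cite{Emmrich2004} can be cited directly. I would also note that no lower bound on $\|h\|_n$ is needed, since continuity together with non-vanishing already makes $g$ continuous, and that the mismatch between the Euclidean dot product appearing in the hypothesis and the norm $\|\cdot\|_n$ used to construct $g$ is harmless, as all that enters the final inequality is the strict positivity of $h(\vec x_0)\cdot h(\vec x_0)$ and of $\|h(\vec x_0)\|_n$.
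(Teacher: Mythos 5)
Your proposal is correct and is essentially identical to the paper's own proof: both argue by contradiction, define the continuous self-map $-R\,h(\vec x)/\|h(\vec x)\|_n$ of the closed ball, invoke the Brouwer fixed point theorem (citing \cite{Emmrich2004} for the arbitrary-norm version), and contradict the boundary sign condition at the fixed point. Your explicit remarks that the fixed point lies on the boundary sphere (so the hypothesis applies there) and that the mixing of the Euclidean pairing with the norm $\|\cdot\|_n$ is harmless are careful touches the paper leaves implicit.
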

\begin{proof}
	Assume that  $h \neq 0$
	everywhere. Then $f(\vec x) := - R h(\vec x)/ \Vert h(\vec x) \Vert_n$
	is continuous, mapping the ball into itself (in fact, onto its
	boundary). Therefore, $f$ has a fixed point, say $\vec{x}_0$, i.e.,
	$\vec{x}_0 = -R h(\vec{x}_0)/ \Vert h(\vec{x}_0) \Vert_n$. However, this
	gives the contradiction
	$0 < \vec{x}_0\cdot\vec{x}_0 =
	-R\braket{h(\vec{x}_0),\vec{x}_0}/\|\vec{x}_0\|_n \leq 0$. 
\end{proof}

Following \cite{Rohwedder2013b}, the idea is now to choose $h_d$ such
that $\mathcal{F}_d =0$ is equivalent to $h_d=0$ and use the above
argument. 
\begin{theorem}
	Let $\mathcal V_d$ be a finite-dimensional subspace of
	$\mathcal V$ and set
	\begin{equation}
	\kappa_d:= \min_{(t,\lambda)\in \mathcal V_d\times \mathcal V_d}\Vert (t,\lambda) - (t_*,\lambda_*)\Vert_{\mathcal V\times \mathcal V} = \Vert (t_m,\lambda_m) - (t_*,\lambda_*)\Vert_{\mathcal V\times \mathcal V}.
	\label{Condition1}
	\end{equation}
	Assume that $\kappa_d$ satisfies
	\begin{equation}
	\kappa_d \leq \frac{\delta\gamma}{\gamma + L},
	\label{Condition2}
	\end{equation}
	where $\gamma$ and $L$ are the monotonicity and Lipschitz constants, respectively, that hold on $B_\delta(t_*,\lambda_*)$. 
	Then the projected extended coupled-cluster problem
	$ \mathcal{F}_d(t,\lambda)=0$ has a unique solution
	$(t_d,\lambda_d)$ in the neighborhood
	$B_\delta(t_*,\lambda_*)\cap (\mathcal V_d\times \mathcal V_d)$.
	\label{ThmEst}
\end{theorem}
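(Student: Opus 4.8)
The plan is to follow the strategy behind Theorem~4.1 in \cite{Rohwedder2013b}: recast the discrete equation $\mathcal F_d(t_d,\lambda_d)=0$ as the vanishing of a continuous self-map of the finite-dimensional space $\mathcal V_d\times\mathcal V_d$ and invoke Lemma~\ref{thm:brouwer}. Write $x_*:=(t_*,\lambda_*)$ and let $x_m:=(t_m,\lambda_m)$ be the minimizer in \eqref{Condition1}, that is, the orthogonal projection of $x_*$ onto the closed subspace $\mathcal V_d\times\mathcal V_d$ with respect to the inner product inducing $\Vert\cdot\Vert_{\mathcal V\times\mathcal V}$. Put $e:=x_m-x_*$; by the projection theorem $e$ is orthogonal to $\mathcal V_d\times\mathcal V_d$ and $\Vert e\Vert_{\mathcal V\times\mathcal V}=\kappa_d$.

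Since $\mathcal V_d\times\mathcal V_d$ is finite dimensional, I would use the Riesz isomorphism to identify the restriction of the pairing $\langle\mathcal F_d(x),\cdot\rangle$ to this subspace with an element of the subspace itself. Concretely, define $h:\mathcal V_d\times\mathcal V_d\to\mathcal V_d\times\mathcal V_d$ by the requirement
\[ (h(z),z')_{\mathcal V\times\mathcal V}=\langle\mathcal F(x_m+z),z'\rangle\qquad\text{for all }z,z'\in\mathcal V_d\times\mathcal V_d, \]
where on the right $\langle\cdot,\cdot\cdot\rangle$ is the duality pairing (and $\langle\mathcal F_d(x),z'\rangle=\langle\mathcal F(x),z'\rangle$ for $z'$ in the subspace). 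Because $\mathcal F$ is (Fr\'echet-)smooth, $h$ is continuous, and $h(z)=0$ is equivalent to $\mathcal F_d(x_m+z)=0$. I would then apply Lemma~\ref{thm:brouwer} on the closed ball $\bar B_R\subset\mathcal V_d\times\mathcal V_d$ of radius $R:=L\kappa_d/\gamma$ about the origin, using the norm $\Vert\cdot\Vert_{\mathcal V\times\mathcal V}$ (the lemma permits any norm).

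Two properties must be verified. First, the monotonicity and Lipschitz estimates are only guaranteed on $B_\delta(x_*)$, so I must keep the whole ball inside it: for $\Vert z\Vert_{\mathcal V\times\mathcal V}\le R$ the triangle inequality gives $\Vert(x_m+z)-x_*\Vert_{\mathcal V\times\mathcal V}\le R+\kappa_d=\kappa_d(\gamma+L)/\gamma$, which is at most $\delta$ exactly by hypothesis \eqref{Condition2}. Second, I must check the boundary sign condition. Writing $w:=(x_m+z)-x_*=z+e$ and using $\mathcal F(x_*)=0$, decompose $z=w-e$ to get
\[ (h(z),z)=\langle\mathcal F(x_m+z)-\mathcal F(x_*),w\rangle-\langle\mathcal F(x_m+z)-\mathcal F(x_*),e\rangle. \]
Local strong monotonicity bounds the first term below by $\gamma\Vert w\Vert_{\mathcal V\times\mathcal V}^2$, while local Lipschitz continuity together with Cauchy--Schwarz bounds the modulus of the second by $L\kappa_d\Vert w\Vert_{\mathcal V\times\mathcal V}$, so that
\[ (h(z),z)\ge\Vert w\Vert_{\mathcal V\times\mathcal V}\big(\gamma\Vert w\Vert_{\mathcal V\times\mathcal V}-L\kappa_d\big). \]
Since $z\perp e$, Pythagoras gives $\Vert w\Vert_{\mathcal V\times\mathcal V}^2=\Vert z\Vert_{\mathcal V\times\mathcal V}^2+\kappa_d^2\ge\Vert z\Vert_{\mathcal V\times\mathcal V}^2$, hence on the boundary $\Vert z\Vert_{\mathcal V\times\mathcal V}=R$ we have $\gamma\Vert w\Vert_{\mathcal V\times\mathcal V}\ge\gamma R=L\kappa_d$ and the bracket is nonnegative. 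Thus $(h(z),z)\ge0$ whenever $\Vert z\Vert_{\mathcal V\times\mathcal V}=R$.

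By Lemma~\ref{thm:brouwer} there is $z_d\in\bar B_R$ with $h(z_d)=0$, so $(t_d,\lambda_d):=x_m+z_d$ solves $\mathcal F_d=0$ and, by the first estimate of the previous paragraph, lies in $B_\delta(x_*)\cap(\mathcal V_d\times\mathcal V_d)$. Uniqueness in this neighbourhood follows directly from strong monotonicity: if $(t_d,\lambda_d)$ and $(t_d',\lambda_d')$ were two such solutions, then, their difference lying in $\mathcal V_d\times\mathcal V_d$,
\[ 0=\langle\mathcal F_d(t_d,\lambda_d)-\mathcal F_d(t_d',\lambda_d'),(t_d-t_d',\lambda_d-\lambda_d')\rangle\ge\gamma\Vert(t_d-t_d',\lambda_d-\lambda_d')\Vert_{\mathcal V\times\mathcal V}^2, \]
which forces them to coincide. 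I expect the main difficulty to be the quantitative bookkeeping that simultaneously keeps the Brouwer ball inside $B_\delta(x_*)$---where alone $\gamma$ and $L$ are available---and makes it large enough (radius $R=L\kappa_d/\gamma$) for the boundary condition to hold; the orthogonality $e\perp\mathcal V_d\times\mathcal V_d$ of the best-approximation error, through the Pythagorean identity $\Vert w\Vert^2=\Vert z\Vert^2+\kappa_d^2$, is precisely what lets both requirements be met under the single hypothesis \eqref{Condition2}.
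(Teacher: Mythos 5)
Your proof is correct, and it follows the paper's skeleton---a Brouwer-type argument via Lemma~\ref{thm:brouwer} on a finite-dimensional ball centered at the best approximation $(t_m,\lambda_m)$, with the boundary sign condition obtained from monotonicity plus Lipschitz continuity---but the quantitative execution genuinely differs. The paper takes the largest admissible radius $R=\delta-\kappa_d$ and splits $\mathcal F(x_m+z)=[\mathcal F(x_m+z)-\mathcal F(x_m)]+[\mathcal F(x_m)-\mathcal F(x_*)]$, applying monotonicity between $x_m+z$ and $x_m$ and Lipschitz continuity between $x_m$ and $x_*$; hypothesis \eqref{Condition2} then enters the sign estimate directly through $\gamma R-L\kappa_d=\gamma\delta-\kappa_d(\gamma+L)\geq 0$. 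You instead take the smallest workable radius $R=L\kappa_d/\gamma$, apply monotonicity between $x_m+z$ and the exact root $x_*$, and shift the discrepancy into the test vector via $z=w-e$; the boundary condition then holds automatically thanks to the Galerkin orthogonality $e\perp\mathcal V_d\times\mathcal V_d$ and the Pythagorean bound $\Vert w\Vert\geq\Vert z\Vert=R$, and \eqref{Condition2} is used only to keep the Brouwer ball inside $B_\delta(t_*,\lambda_*)$, where alone $\gamma$ and $L$ are available. One consequence is that your route genuinely needs the Hilbert structure of $\mathcal V$ (without orthogonality, $\Vert z+e\Vert\geq\Vert z\Vert$ can fail and the boundary estimate collapses), whereas the paper's decomposition works for a best approximation in any norm; in exchange, Pythagoras gives you $\Vert(t_d,\lambda_d)-(t_*,\lambda_*)\Vert_{\mathcal V\times\mathcal V}\leq\kappa_d\sqrt{\gamma^2+L^2}/\gamma$, which is strictly smaller than $\kappa_d(\gamma+L)/\gamma$, so your solution stays strictly inside $B_\delta(t_*,\lambda_*)$ even when \eqref{Condition2} holds with equality---slightly sharper than the paper's triangle-inequality bookkeeping. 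Finally, your uniqueness argument, pairing the difference of two discrete solutions (which lies in $\mathcal V_d\times\mathcal V_d$, so the pairing with $\mathcal F_d$ agrees with that with $\mathcal F$) against strong monotonicity, is a self-contained one-liner where the paper instead cites Theorem~\ref{UandE} applied to $\mathcal F_d$; both are valid.
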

\begin{proof}
	Let $d:= \dim \mathcal V_d$ and $\{b_j\}_{j=1}^d$ be an orthonormal
	basis of $\mathcal V_d$. Define the continuous vector-valued
	function $h_d:\mathbb R^{2d}\to \mathbb R^{2d}$ by
	$h_d(\vec x)=h_d(\vec{v},\vec{w}) = (\vec y, \vec z)$, where
	\[
	y_j = \langle D_\lambda\mathcal{E}(t_m + v,\lambda_m +
	w),b_j\rangle,\quad z_j= \langle D_t\mathcal{E}(t_m + v,\lambda_m +
	w),b_j\rangle ,
	\]
	and $v=\sum_{j=1}^d v_j b_j$, $\vec{v}=(v_1,\dots,v_d)$, $w=\sum_{j=1}^d w_j b_j$, $\vec{w}=(w_1,\dots,w_d)$. 
	Let $\Vert (\vec{v},\vec{w}) \Vert_{2d} := \Vert (v,w)\Vert_{\mathcal V\times
		\mathcal V}$, a norm on $\RR^{2d}$ (a fact that can be easily checked). 
	By definition, $h_d=0$ is equivalent to $\mathcal{F}_d =0$.
	
	We now choose
	$R:= \delta-\kappa_d \geq \delta L/(\gamma +
	L)>0$ and note that
	$(\vec{v},\vec{w}) \in B_R(t_m,\lambda_m)$ implies $(v,w)\in B_\delta(t_*,\lambda_*)$. 
	For $\vec x$ that satisfies $\Vert \vec x\Vert_{2d}=R $, we have using monotonicity and Lipschitz continuity of $\mathcal F$,
	\begin{equation*}
	\begin{split}
	\langle h_d(\vec x),\vec x\rangle &= \sum_{j=1}^d (y_j v_j + z_j w_j )
	%&=	\langle D_\lambda\mathcal{E}(t_m + v,\lambda_m + w),v\rangle + \langle  D_t\mathcal{E}(t_m + v,\lambda_m + w),w\rangle \\
	=  \langle  \mathcal F(t_m+ v,\lambda_m+ w),(v,w)    \rangle  \\
	&=	\langle  \mathcal F(t_m+ v,\lambda_m+ w) - \mathcal F(t_m,\lambda_m),(v,w)    \rangle \\
	& \quad +\langle \mathcal F(t_m,\lambda_m) -\mathcal F(t_*,\lambda_*),(v,w)    \rangle
	+ \langle \mathcal F(t_*,\lambda_*),(v,w)    \rangle\\
	&\geq \gamma \Vert (v,w)\Vert_{\mathcal V\times \mathcal
		V}^2 -L \kappa_d \Vert (v,w)\Vert_{\mathcal V\times
		\mathcal V}.
	\end{split}
	\end{equation*}
	Since $\gamma R - L \kappa_d = \gamma  \delta - \kappa_d (\gamma +L)\geq 0$, we can conclude 
	$\langle h_d(\vec x),\vec x\rangle = R(\gamma R -
	L\kappa_d)\geq 0$.  Lemma~\ref{thm:brouwer}
	now establishes that $h_d(\vec x_*)=0$ for some $\vec x_*$ 
	with $\Vert \vec x \Vert_{2d} = \Vert (v_*,w_*)\Vert_{\mathcal V\times \mathcal V} \leq  R $, which is equivalent to that 
	$(t_d,\lambda_d) := (t_m + v_*,\lambda_m + w_*) $ solves the projected problem $\mathcal{F}_d =0$. The uniqueness follows from Theorem \ref{UandE} applied to $\mathcal{F}_d$. 
\end{proof}

We will next show the power of the bivariational principle as
far as the ECC method is concerned. The standard variational
formulation of CC theory introduces a Lagrangian. Error estimates for the CC
energy then requires that the dual problem has a solution. (See
\cite{Rohwedder2013b} where this non-trivial step has been done
by means of the Lax--Milgram theorem.)
However, the ECC method is based on the bivariational
principle and the energy itself is stationary in this formulation, i.e.,  
the solution $(t_*,\lambda_*)$ is a
critical point of the bivariational energy. When
$(t_d,\lambda_d)$ is close to the exact solution, we are guaranteed
a quadratic error estimate for free. As our last order of business we will discuss this further.

Under the assumption that $ H$
supports a ground state with ground-state energy $E_*$, the
Rayleigh-Ritz variational principle states that
\begin{equation*}
E_* \leq \mathcal E_\text{var}(\psi) :=\frac{\langle\psi, H\psi \rangle}{\langle\psi,\psi \rangle} 
\label{VarPrin}
\end{equation*}
for any $\psi\in\mathcal H$. Minimizing $\mathcal E_\text{var}$ over
trial wavefunctions (say, considering
$\mathcal H_\text{appr} \subset \mathcal H$) yields an approximate
energy $E_\text{appr}$ that also provides an upper bound to $E_*$,
i.e., $E_\text{appr}\geq E_*$. Furthermore, since
$D_\psi \mathcal E_\text{var}(\psi_*)=0$, we obtain a second-order
error estimate of the energy (see for instance Eq. (1.4) in
\cite{Rohwedder2013b} and the reference given in connection for more
refined estimates)
\[
0\leq E_\text{appr} - E_* \leq C \Vert \psi_\text{appr} - \psi_* \Vert_{\mathcal H}^2 \leq C' d(\mathcal H_\text{appr}, \psi_*)^2.
\]
In similar a fashion, the critical point condition $D \mathcal E_\text{bivar}(\psi_*,\psi_*') =0$ of the bivariational quotient will give us a second-order error estimate 
of the ECC energy.

As far as truncations of the double wavefunction space $\mathcal{M}
\subset \mathcal H \times \mathcal H$ is concerned (see
Eq.~\eqref{eq:M-def}), where the bivariational pair $(\psi,\tilde{\psi})$ is an element, we will use 
\[
\mathcal M_d:= \{ (\psi,\tilde \psi) : \psi = e^T  \phi_0, \tilde \psi = e^{-T^\dagger}  e^\Lambda \phi_0, \quad t,\lambda\in \mathcal V_d \} .
\]
Since $\mathcal M_d$ is closed (we assume that $\mathcal V_d$ is closed, see the next lemma), we define the distance
\begin{align*}
d( \mathcal M_d, (\psi_*,\tilde \psi_*)) := \min_{(\psi,\tilde{\psi}) \in \mathcal M_d} \Vert (\psi,\tilde{\psi}) - 
(\psi_*,\tilde \psi_*) \Vert_{\mathcal H   \times \mathcal H},
\end{align*}
where $\Vert (\cdot,\cdot \cdot) \Vert_{\mathcal H \times \mathcal H}^2:= 
\Vert \cdot \Vert_{\mathcal H}^2 + \Vert \cdot\cdot \Vert_{\mathcal H}^2$. 
\begin{lemma}
	Assume that $\mathcal V_d$ is closed. Then $\mathcal M_d$ is closed. Moreover, 
	it holds 
	\begin{equation}
	d(\mathcal V_d,t_* )^2 + d(\mathcal V_d,\lambda_* )^2 \leq C \, d( \mathcal M_d, (\psi_*,\tilde \psi_*))^2
	\label{HtimesHd}
	\end{equation}
	for some constant $C$.
	\label{LemmaHtimesHd}
\end{lemma}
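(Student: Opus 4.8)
The plan is to handle the two assertions separately, in both cases exploiting that the parameterization map $\Phi$ of Lemma~\ref{lemma:ECCpara} is a homeomorphism between $\mathcal V\times\mathcal V$ and $\mathcal M$. First I would prove closedness of $\mathcal M_d$. Since $\mathcal V_d$ is closed, $\mathcal V_d\times\mathcal V_d$ is closed in $\mathcal V\times\mathcal V$. By Lemma~\ref{lemma:ECCpara}, $\Phi$ is smooth with a smooth inverse, hence a homeomorphism onto $\mathcal M$, so $\mathcal M_d=\Phi(\mathcal V_d\times\mathcal V_d)$ is closed in $\mathcal M$ for the subspace topology. Finally $\mathcal M$ is itself closed in $\mathcal H\times\mathcal H$: the constraints $\langle\phi_0,\psi\rangle=1$ and $\langle\tilde\psi,\psi\rangle=1$ defining $\mathcal M$ in Eq.~\eqref{eq:M-def} are a continuous linear and a continuous bilinear functional on $\mathcal H\times\mathcal H$ (using $\Vert\cdot\Vert\leq\Vert\cdot\Vert_{\mathcal H}$), so $\mathcal M$ is an intersection of closed level sets. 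A subset closed in $\mathcal M$, with $\mathcal M$ closed in $\mathcal H\times\mathcal H$, is closed in $\mathcal H\times\mathcal H$; this gives the first claim.

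For the estimate I would begin by noting that, with the product norm $\Vert(\cdot,\cdot\cdot)\Vert_{\mathcal V\times\mathcal V}^2=\Vert\cdot\Vert_{\mathcal V}^2+\Vert\cdot\cdot\Vert_{\mathcal V}^2$, minimization over $\mathcal V_d\times\mathcal V_d$ decouples, so the left-hand side of \eqref{HtimesHd} equals $d(\mathcal V_d\times\mathcal V_d,(t_*,\lambda_*))^2$, both factor distances being attained because $\mathcal V_d$ is a closed subspace. Let $(\psi_m,\tilde\psi_m)\in\mathcal M_d$ realize $d(\mathcal M_d,(\psi_*,\tilde\psi_*))$ and set $(t_m,\lambda_m):=\Phi^{-1}(\psi_m,\tilde\psi_m)\in\mathcal V_d\times\mathcal V_d$. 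The key point is that $\Phi^{-1}$ is Fr\'echet smooth on $\mathcal M$ by Lemma~\ref{lemma:ECCpara}, hence locally Lipschitz near $(\psi_*,\tilde\psi_*)$; concretely, the $\psi\mapsto t$ block obeys the explicit bound \eqref{ExpMap1} of Theorem~\ref{Thm1RS}, while the $(\psi,\tilde\psi)\mapsto\lambda$ block is the composition $\tilde\psi\mapsto\omega=e^{T^\dagger}\tilde\psi\mapsto\lambda$ of smooth maps from the proof of Lemma~\ref{lemma:ECCpara}. Writing $C$ for the square of the local Lipschitz constant of $\Phi^{-1}$ at $(\psi_*,\tilde\psi_*)$, I would then chain
\begin{align*}
d(\mathcal V_d\times\mathcal V_d,(t_*,\lambda_*))
&\leq \Vert(t_*,\lambda_*)-(t_m,\lambda_m)\Vert_{\mathcal V\times\mathcal V} \\
&= \Vert\Phi^{-1}(\psi_*,\tilde\psi_*)-\Phi^{-1}(\psi_m,\tilde\psi_m)\Vert_{\mathcal V\times\mathcal V} \\
&\leq C^{1/2}\Vert(\psi_*,\tilde\psi_*)-(\psi_m,\tilde\psi_m)\Vert_{\mathcal H\times\mathcal H}
= C^{1/2}\,d(\mathcal M_d,(\psi_*,\tilde\psi_*)),
\end{align*}
and square to obtain \eqref{HtimesHd}.

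The main obstacle is the locality of the Lipschitz constant: Theorem~\ref{Thm1RS} and the smoothness of $\Phi^{-1}$ only supply a uniform Lipschitz bound on some ball about $(\psi_*,\tilde\psi_*)$, so the chain above requires the nearest point $(\psi_m,\tilde\psi_m)$ to lie in that ball. This holds once $d(\mathcal M_d,(\psi_*,\tilde\psi_*))$ is small enough, i.e., for $\mathcal V_d$ sufficiently rich --- exactly the regime in which \eqref{HtimesHd} is later combined with Theorem~\ref{UandE}. A minor technical point is the attainment of $d(\mathcal M_d,\cdot)$: in the finite-dimensional setting of Theorem~\ref{ThmEst} it follows from closedness together with properness of $\Phi\vert_{\mathcal V_d\times\mathcal V_d}$ (bounded-and-closed subsets of $\mathcal M_d$ are then compact), and in any case one may replace the minimizer by an infimizing sequence and pass to the limit, leaving the estimate unchanged.
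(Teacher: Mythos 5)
Your proposal is correct and follows essentially the same route as the paper's proof: closedness of $\mathcal M_d$ via the homeomorphism $\Phi$ of Lemma~\ref{lemma:ECCpara}, and the estimate \eqref{HtimesHd} via the local Lipschitz continuity of $\Phi^{-1}$, which the paper simply carries out by hand through explicit operator-norm chains (the $t$-block via \eqref{ExpMap1}, the $\lambda$-block via $\omega = e^{T^\dagger}\tilde\psi$, exactly the decomposition you identify). Your explicit attention to the locality of the Lipschitz constant and to attainment of the minimum (replaceable by an infimizing sequence) makes precise two points the paper's own proof leaves implicit.
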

\begin{proof}
	By Lemma \ref{lemma:ECCpara}, the map $\Phi:(t,\lambda) \mapsto (e^T\phi_0,e^{-T^\dagger} e^\Lambda\phi_0)$ and its inverse are smooth and $\mathcal M_d = \Phi(\mathcal V_d\times\mathcal V_d)$ is closed since $\mathcal V_d$ is.

	For \eqref{HtimesHd}, we first note that
	\begin{equation*}
	\begin{split}
	d(  \mathcal M_d, (\psi_*,\tilde \psi_*))^2 &= 
	\min_{t,\lambda \in \mathcal V_d } \big( \Vert e^T \phi_0 -e^{T_*} \phi_0 \Vert_{\mathcal H}^2 \\
	& \quad +   \Vert e^{-T^\dagger} e^\Lambda  \phi_0 -e^{-T_*^\dagger} e^{\Lambda_*} \phi_0   \Vert_{\mathcal H}^2 \big).
	\end{split}
	\end{equation*}
	This gives (where we let $C$ be a constant that is redefined and reused at leisure)
	\begin{equation*}
	\begin{split}
	d(\mathcal V_d, \lambda_*)^2 &\leq C \min_{\lambda \in \mathcal V_d} \Vert e^\Lambda \phi_0 - e^{\Lambda_*} \phi_0 \Vert_{\mathcal H}^2 \\
	& \leq C \big( \min_{t,\lambda\in\mathcal V_d}   \Vert e^{T_*^\dagger} \Vert_{\mathcal B (\mathcal H)}^2  \big( \Vert   
	e^{-T^\dagger}  e^{\Lambda} \phi_0 - e^{-T_*^\dagger}  e^{\Lambda_*} \phi_0 \Vert_{\mathcal H}^2 \\
	& \quad +  \Vert e^{-T^\dagger} - e^{-T_*^\dagger} \Vert_{\mathcal B (\mathcal H)} \Vert e^\Lambda \Vert_{\mathcal B (\mathcal H)}^2  \big)   \big) \\
	& \leq C \big( \min_{t,\lambda\in\mathcal V_d}  \Vert   
	e^{-T^\dagger}  e^{\Lambda} \phi_0 - e^{-T_*^\dagger}  e^{\Lambda_*} \phi_0 \Vert_{\mathcal H}^2   
	+ \min_{t\in\mathcal V_d}\Vert e^T\phi_0 -e^{T_*}\phi_0 \Vert_{\mathcal H}^2    \big) \\
	& \leq C\, d( (\mathcal H \times \mathcal H)_d, (\psi_\perp,\tilde \psi_\perp))^2.
	\end{split}
	\end{equation*}
	The desired inequality then follows from,
	\[
	d(\mathcal V_d,t_*) \leq D \min_{t\in\mathcal V_d} || e^T\phi_0 - e^{T_*}\phi_0 ||_\mathcal{H}   \leq D\, 
	d( \mathcal M_d, (\psi_*,\tilde \psi_*)).
	\]
\end{proof}
\begin{theorem}
	\label{thm:Eest}
	Let $\delta>0$ be such that $\mathcal F$ is strongly monotone (with constant $\gamma$) and Lipschitz continuous (with constant $L$) for $(t,\lambda)\in B_\delta(t_*,\delta_*)$ and 
	assume that $\mathcal V_d$ is sufficiently good an approximation of $\mathcal V$. 
	If  $(t_d,\lambda_d)\in\mathcal V_d \times \mathcal V_d$ is the solution of $\mathcal F_d =0$ and $(t_*,\lambda_*)\in\mathcal V \times \mathcal V$ is the (exact) solution of $\mathcal F=0$, then:
	\begin{itemize}
		\item[(i)]
		With $E_d:= \mathcal E(t_d,\lambda_d)$ there exist constants $d_1,d_2$ such that
		\begin{equation}
		|E_d - E_*| \leq d_1 \Vert t_d -t_*\Vert_{\mathcal V}^2 + d_2 \Vert t_d -t_*\Vert_{\mathcal V}\Vert \lambda_d -\lambda_*\Vert_{\mathcal V}
		\label{Eest1}
		\end{equation}
		and with $C_*$ as before there holds
		\begin{equation}
		\begin{split}
		|E_d - E_*|&\leq (C_*  + \mathcal O(\Vert t_*\Vert_{\mathcal V}) + \mathcal O(\Vert \lambda_*\Vert_{\mathcal V}))   \frac{L^2}{2\gamma^2} \big( d(\mathcal V_d,t_*)^2 + d(\mathcal V_d,\lambda_*)^2\big)\\
		& \quad + \mathcal O \big( \max(d(\mathcal V_d,t_*) , d(\mathcal V_d,\lambda_*))^3\big).
		\end{split}
		\label{Eest2}
		\end{equation}
		\item[(ii)] Letting $\psi_* = e^{T_*}\phi_0$, $\psi_d = e^{T_d}\phi_0$, $\tilde \psi_* = e^{-T_*^\dagger} e^{\Lambda_*} \phi_0$ and 
		$\tilde \psi_d = e^{-T_d^\dagger} e^{\Lambda_d} \phi_0$, there exist $\tilde d_1, \tilde d_2$ such that
		\begin{equation}
		|E_d - E_*| \leq \tilde d_1 \Vert \psi_d -\psi_*\Vert_{\mathcal H}^2 + \tilde d_2 \Vert \psi_d -\psi_*\Vert_{\mathcal H}\Vert \tilde \psi_d -\tilde \psi_*\Vert_{\mathcal H}.
		\label{Eest3}
		\end{equation}
		Furthermore, there exists a constant $\tilde C$ such that
		\begin{equation}
		|E_d-E_*| \leq \tilde C \,  d( \mathcal M_d, (\psi_*,\tilde{\psi}_*))^2 + \mathcal O \big( d(\mathcal M_d,(\psi_*,\tilde \psi_*) )^3\big).
		\label{Eest5}
		\end{equation}
	\end{itemize}
\end{theorem}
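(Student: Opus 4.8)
The plan is to turn the stationarity of the bivariational energy into an \emph{exact} quadratic identity for the energy error, and then to read off each of the four estimates as an exercise in bounding. The crucial preliminary observation is that the normalization built into the ECC parametrization removes the denominator of $\mathcal{E}_\text{bivar}$: for any $(t,\lambda)$, writing $\psi = e^{T}\phi_0$ and $\tilde\psi = e^{-T^\dagger}e^{\Lambda}\phi_0$, one has $\langle\phi_0,\psi\rangle = \langle\tilde\psi,\psi\rangle = 1$ and $\mathcal{E}(t,\lambda) = \langle\tilde\psi, H\psi\rangle$. In particular $E_d = \langle\tilde\psi_d, H\psi_d\rangle$, $E_* = \langle\tilde\psi_*, H\psi_*\rangle$, and both the truncated and the exact pair obey the two normalization constraints automatically.

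Writing $\delta\psi := \psi_d - \psi_*$ and $\delta\tilde\psi := \tilde\psi_d - \tilde\psi_*$ and expanding the bilinear form, the first-order part of $E_d - E_*$ is $\langle\delta\tilde\psi, H\psi_*\rangle + \langle\tilde\psi_*, H\delta\psi\rangle$. Using $H\psi_* = E_*\psi_*$, the symmetry of $H$, and $H\tilde\psi_* = E_*\tilde\psi_*$ (both eigenrelations are furnished by Theorem~\ref{thm:contECC}), this collapses to $E_*\big(\langle\delta\tilde\psi,\psi_*\rangle + \langle\tilde\psi_*,\delta\psi\rangle\big)$. Expanding the constraint $1 = \langle\tilde\psi_d,\psi_d\rangle = \langle\tilde\psi_*,\psi_*\rangle$ shows $\langle\delta\tilde\psi,\psi_*\rangle + \langle\tilde\psi_*,\delta\psi\rangle = -\langle\delta\tilde\psi,\delta\psi\rangle$, so the first-order part is exactly $-E_*\langle\delta\tilde\psi,\delta\psi\rangle$. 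Combined with the remaining term $\langle\delta\tilde\psi, H\delta\psi\rangle$ this yields the \emph{exact}, remainder-free identity
\[
E_d - E_* = \langle\tilde\psi_d - \tilde\psi_*,\,(H - E_*)(\psi_d - \psi_*)\rangle .
\]
This manifestly second-order bilinear form is the entire content of the theorem; everything that follows is estimation.

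From the identity, \eqref{Eest3} is immediate: boundedness of $H$ (Eq.~\eqref{eq:bounded-operator}) together with the $E_*$ term gives $|E_d - E_*| \le C_*\|\delta\psi\|_{\mathcal H}\|\delta\tilde\psi\|_{\mathcal H}$, and the extra $\tilde d_1\|\delta\psi\|_{\mathcal H}^2$ term in \eqref{Eest3} is then free slack. For \eqref{Eest1} I would bound each wavefunction increment by amplitude increments: the local Lipschitz bound for the exponential map (Theorem~\ref{Thm1RS}, Eq.~\eqref{ExpMap1}) gives $\|\delta\psi\|_{\mathcal H} \le D'\|\delta t\|_{\mathcal V}$, while $(t,\lambda)\mapsto\tilde\psi$ is smooth, hence locally Lipschitz, giving $\|\delta\tilde\psi\|_{\mathcal H} \le C(\|\delta t\|_{\mathcal V} + \|\delta\lambda\|_{\mathcal V})$. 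The key structural point is that $\psi$ depends on $t$ alone, so $\delta\psi$ carries no $\lambda$-dependence; the product therefore produces precisely $d_1\|\delta t\|_{\mathcal V}^2 + d_2\|\delta t\|_{\mathcal V}\|\delta\lambda\|_{\mathcal V}$, with no $\|\delta\lambda\|_{\mathcal V}^2$ term.

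Finally, the distance estimates \eqref{Eest2} and \eqref{Eest5} follow by chaining the identity with the quasi-optimality bound of Theorem~\ref{UandE} and the norm comparison of Lemma~\ref{LemmaHtimesHd}. Applying $\|\delta\psi\|_{\mathcal H}\|\delta\tilde\psi\|_{\mathcal H} \le \frac{1}{2}\big(\|\delta\psi\|_{\mathcal H}^2 + \|\delta\tilde\psi\|_{\mathcal H}^2\big)$ and Taylor-expanding the exponential maps about $(t_*,\lambda_*)$ gives $\|\delta\psi\|_{\mathcal H}^2 + \|\delta\tilde\psi\|_{\mathcal H}^2 = \big(1 + \mathcal O(\|t_*\|_{\mathcal V}) + \mathcal O(\|\lambda_*\|_{\mathcal V})\big)\big(\|\delta t\|_{\mathcal V}^2 + \|\delta\lambda\|_{\mathcal V}^2\big) + \mathcal O(\|(\delta t,\delta\lambda)\|_{\mathcal V\times\mathcal V}^3)$; substituting \eqref{Est2} to replace the amplitude errors by $\frac{L^2}{\gamma^2}\big(d(\mathcal V_d,t_*)^2 + d(\mathcal V_d,\lambda_*)^2\big)$ produces \eqref{Eest2}, and a further application of \eqref{HtimesHd} to dominate the amplitude distances by $d(\mathcal M_d,(\psi_*,\tilde\psi_*))^2$ collapses the bound into \eqref{Eest5}. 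The only real obstacle is bookkeeping rather than ideas: the energy identity itself has no cubic remainder, so the $\mathcal O(\cdot^3)$ terms in \eqref{Eest2} and \eqref{Eest5} are artifacts of the nonlinear passage from wavefunction norms to amplitude norms and distances, and the care lies in tracking the leading constant $C_*$ together with its $\mathcal O(\|t_*\|_{\mathcal V}) + \mathcal O(\|\lambda_*\|_{\mathcal V})$ corrections.
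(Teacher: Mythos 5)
Your proposal is correct, but it takes a genuinely different route from the paper. The paper proves the theorem by Taylor expansion of $\mathcal E$ at the critical point: it invokes Taylor's theorem with integral remainder, computes the Hessian $D^2\mathcal E(t_*,\lambda_*)$ explicitly in block form, simplifies it using Lemma~\ref{SimSEs} ($H_{t_*}\phi_0 = E_*\phi_0$ and $H_{t_*}^\dagger e^{\Lambda_*}\phi_0 = E_* e^{\Lambda_*}\phi_0$), and bounds the result by $C_*$ times operator norms of $e^{\pm T_*}$, $e^{\Lambda_*}-I$, etc., carrying a cubic Taylor remainder through \eqref{TaylorsThm} into \eqref{Eest2} and \eqref{Eest5}. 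You instead integrate the stationarity into the exact identity
\begin{equation*}
E_d - E_* \;=\; \bigl\langle \tilde\psi_d - \tilde\psi_*,\,(H-E_*)(\psi_d-\psi_*)\bigr\rangle,
\end{equation*}
which follows, as you show, from bilinearity of $\langle\tilde\psi,H\psi\rangle$, the eigenrelations of Theorem~\ref{thm:contECC} together with symmetry of $H$ (legitimate here since the paper works over real $\mathcal L^2_N$), and the constraint $\langle\tilde\psi_d,\psi_d\rangle = \langle\tilde\psi_*,\psi_*\rangle = 1$, which the ECC parametrization builds into $\mathcal M_d\subset\mathcal M$ automatically. What your approach buys: \eqref{Eest1} and \eqref{Eest3} come out \emph{remainder-free}, valid wherever the local Lipschitz constants of the exponential maps hold, and — notably — without ever using $\mathcal F_d(t_d,\lambda_d)=0$, which enters only through the quasi-optimality bound \eqref{Est2} needed for \eqref{Eest2} and \eqref{Eest5}; you also sidestep the delicate absorption of the cubic Taylor remainder into the specific quadratic form $d_1\|\delta t\|^2_{\mathcal V} + d_2\|\delta t\|_{\mathcal V}\|\delta\lambda\|_{\mathcal V}$ of \eqref{Eest1}, and the absence of a $\|\delta\lambda\|^2_{\mathcal V}$ term falls out structurally from $\psi$ depending on $t$ alone. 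What the paper's route buys is fully explicit constants $D_1,D_2$ in terms of $\|e^{\pm T_*}\|_{\mathcal B(\mathcal H)}$, $\|e^{\Lambda_*}-I\|_{\mathcal B(\mathcal H)}$, which deliver the leading factor $(C_* + \mathcal O(\|t_*\|_{\mathcal V}) + \mathcal O(\|\lambda_*\|_{\mathcal V}))$ of \eqref{Eest2} directly. In your argument that factor instead comes from your asserted expansion $\|\delta\psi\|^2_{\mathcal H} + \|\delta\tilde\psi\|^2_{\mathcal H} \le (1 + \mathcal O(\|t_*\|_{\mathcal V}) + \mathcal O(\|\lambda_*\|_{\mathcal V}))(\|\delta t\|^2_{\mathcal V} + \|\delta\lambda\|^2_{\mathcal V}) + \mathcal O(\cdot^3)$; this is the one step you state without justification, and it deserves a line: the $t$-derivative of $\tilde\psi = e^{-T^\dagger}e^{\Lambda}\phi_0$ is $-\delta T^\dagger e^{-T^\dagger}e^{\Lambda}\phi_0$, which vanishes at $t_*=\lambda_*=0$ because $\delta T^\dagger\phi_0 = 0$, so the cross term $\|\delta t\|_{\mathcal V}\|\delta\lambda\|_{\mathcal V}$ indeed carries an $\mathcal O(\|t_*\|_{\mathcal V}+\|\lambda_*\|_{\mathcal V})$ coefficient and can be absorbed by Young's inequality. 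With that line added, your chaining through \eqref{Est2} and Lemma~\ref{LemmaHtimesHd} reproduces \eqref{Eest2} and \eqref{Eest5} as claimed.
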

\begin{proof}
	(i) Taylor expanding $\mathcal E(t,\lambda)$ at $(t_*,\lambda_*)$ and using the notation $g_d:= t_d-t_*$ and $k_d := \lambda_d-\lambda_*$, we obtain 
	(by Taylor's theorem)
	\begin{equation*}
	\begin{split}
	E_d - E_*  &=  \frac{1}{2}   D^2  \mathcal E(t_*,\lambda_*)((g_d,k_d)^2)\\
	&\quad + \frac 1 2\int_{0}^1 (1-r)^2 D^3 \mathcal E ((t_*,\lambda_*) + r(g_d,k_d)) ((g_d,k_d)^3)  dr .
	\end{split}
	\end{equation*}
	From this it is clear that 
	\begin{equation}
	2|E_d - E_*|\leq |D^2  \mathcal E(t_*,\lambda_*)((g_d,k_d)^2)| + \mathcal O\big( \max(d(\mathcal V_d,t_*) , d(\mathcal V_d,\lambda_*))^3\big).
	\label{TaylorsThm}
	\end{equation}
	By straightforward differentiation with respect to the
	amplitudes $t_\mu$ and $\lambda_\mu$,
	\begin{align*}
	(D^2\mathcal E(t,\lambda))_{\mu,\nu} =
	\begin{bmatrix}
	\langle  \phi_0, e^{ \Lambda^\dagger} [[ H_t, X_\mu], X_\nu] \phi_0\rangle & 
	\langle \phi_\nu, e^{ \Lambda^\dagger} [ H_t, X_\mu] \phi_0\rangle \\
	\langle \phi_\mu, e^{ \Lambda^\dagger} [ H_t, X_\nu] \phi_0\rangle & 
	\langle X_\mu X_\nu \phi_0, e^{ \Lambda^\dagger}  H_t\phi_0\rangle
	\end{bmatrix}.
	\end{align*}
	We next note that
	\begin{align*}
	\frac 1 2 & D^2\mathcal E(t_*,\lambda_*) ((g_d,k_d )^2)  \\
	&=  \frac 1 2 \big(  \langle \phi_0, e^{ \Lambda_*^\dagger} [ [ H_{t_*}, G_d],  G_d ]  \phi_0 \rangle 
	+ 2 \langle  K_d\phi_0, e^{ \Lambda_*^\dagger} [ H_{t_*},  G_d] \phi_0\rangle  + \langle  K_d^2 \phi_0, e^{ \Lambda_*^\dagger}  H_{t_*}\phi_0\rangle\big)   \\
	& =\frac 1 2 \big( \langle e^{ \Lambda_*} \phi_0, \big(  H_{t_*}  G_d^2 - 2  G_d  H_{t_*}  G_d +  G_d^2  H_{t_*}   \big) \phi_0\rangle    
	+ 2 \langle e^{ \Lambda_*}  K_d\phi_0, [ H_{t_*} ,  G_d] \phi_0\rangle  \\
	& \qquad + \langle e^{ \Lambda_*} K_d^2 \phi_0,  H_{t_*} \phi_0 \rangle \big).
	\end{align*}
	Using Lemma \ref{SimSEs}, specifically $ H_{t_*} \phi_0 = E_* \phi_0$ and $ H_{t_*}^\dagger e^{ \Lambda_*} \phi_0 = E_* e^{ \Lambda_*} \phi_0$, the following equality holds
	\begin{align*}
	\frac 1 2 & D^2\mathcal E(t_*,\lambda_*) ((g_d,k_d )^2)\\
	&= \frac 1 2 \big(  2 \langle e^{ \Lambda_*}\phi_0,  G_d(E_* - H_{t_*} )  G_d \phi_0 \rangle + 2 \langle e^{ \Lambda_*}  K_d\phi_0,( H_{t_*} -E_*)  G_d\phi_0\rangle   \big) .
	\end{align*}
	Furthermore, since $e^{ \Lambda_*}$ and $ K_d$ commute, we obtain 
	\begin{equation}
	\begin{split}
	\frac 1 2 & \vert D^2\mathcal E(t_*,\lambda_*) ((g_d,k_d )^2)\vert \\
	&=  \vert \langle  G_d^\dagger e^{ \Lambda_*} \phi_0,(E_* -  H_{t_*} )  G_d \phi_0 \rangle + \langle e^{ \Lambda_*}  K_d\phi_0,( H_{t_*} -E_*)  G_d\phi_0\rangle \vert  \\
	& = \vert\langle e^{-T_*^\dagger} \big(G_d^\dagger (e^{\Lambda_*} - I)  - e^{ \Lambda_*}  K_d   \big)\phi_0, (E_* - H) e^{T_*} G_d \phi_0 \rangle \vert \\
	& \leq C_* \Vert  e^{-T_*^\dagger} \big( G_d^\dagger(e^{\Lambda_*} -I) - e^{\Lambda_*} K_d   \big) \phi_0 \Vert_{\mathcal H} 
	\Vert e^{T_*} G_d \phi_0 \Vert_{\mathcal H} \\
	&\leq C_* \Vert  e^{-T_*^\dagger}\Vert_{\mathcal B(\mathcal H)} \big(  \Vert G_d^\dagger  \Vert_{\mathcal B(\mathcal H)} \Vert  e^{\Lambda_*} -I\Vert_{\mathcal B(\mathcal H)} \Vert \phi_0 \Vert_{\mathcal H} \\
	& \quad + \Vert  e^{\Lambda_*}\Vert_{\mathcal B(\mathcal H)} \Vert K_d \phi_0 \Vert_{\mathcal H}      \big)
	\Vert  e^{T_*}\Vert_{\mathcal B(\mathcal H)}\Vert G_d \phi_0 \Vert_{\mathcal H} \\
	& \leq C_* \Vert  e^{-T_*^\dagger}\Vert_{\mathcal B(\mathcal H)} \Vert  e^{T_*}\Vert_{\mathcal B(\mathcal H)}
	\big( c\Vert \phi_0 \Vert_{\mathcal H}  \Vert  e^{\Lambda_*} -I\Vert_{\mathcal B(\mathcal H)} \Vert t_d-t_* \Vert_\mathcal{V}^2 \\
	& \quad + \Vert  e^{\Lambda_*}\Vert_{\mathcal B(\mathcal H)} \Vert t_d-t_* \Vert_\mathcal{V}\Vert \lambda_d-\lambda_* \Vert_\mathcal{V}\big)\\
	& =: D_1 \Vert t_d-t_* \Vert_\mathcal{V}^2 +  D_2 \Vert t_d-t_* \Vert_\mathcal{V}\Vert \lambda_d-\lambda_* \Vert_\mathcal{V},
	\end{split}
	\label{lemmaEq}
	\end{equation}
	where we in the last step defined the constants $D_1 := D_1 (t_*,\lambda_*,\phi_0)$ and $D_2 := D_2(t_*,\lambda_*)$. Thus, by \eqref{TaylorsThm} we can choose 
	$d_1$ and $d_2$, under the assumption that $\max(d(\mathcal V_d,t_*) , d(\mathcal V_d,\lambda_*))$ is sufficiently small, such that \eqref{Eest1} holds.
	
	To obtain \eqref{Eest2}, we see that \eqref{lemmaEq} gives
	\begin{align*}
	\frac 1 2 & \vert D^2\mathcal E(t_*,\lambda_*) ((g_d,k_d )^2) \vert \\
	& \leq  C_* \Vert  e^{-T_*^\dagger}\Vert_{\mathcal B(\mathcal H)} \Vert  e^{T_*}\Vert_{\mathcal B(\mathcal H)}
	\big( c\Vert \phi_0 \Vert_{\mathcal H}  \Vert  e^{\Lambda_*} -I\Vert_{\mathcal B(\mathcal H)} + \frac 1 2 \Vert e^{\Lambda_*} \Vert_{\mathcal B (\mathcal H)}   \big)\\ & \quad \times \big( \Vert t_d-t_* \Vert_\mathcal{V}^2 + \Vert \lambda_d-\lambda_* \Vert_\mathcal{V}^2  \big) \\
	&\leq  (C_* + \mathcal O(\Vert t_* \Vert_{\mathcal V})+ \mathcal O(\Vert \lambda_* \Vert_{\mathcal V}) )\frac{L^2}{2 \gamma^2}\big( d(\mathcal V_d,t_*)^2 + d(\mathcal V_d,\lambda_*)^2\big),
	\end{align*}
	where we used \eqref{Est2}.
	
	(ii) Next, using Theorem \ref{Thm1RS} (equation \eqref{ExpMap1}), \eqref{lemmaEq} gives
	\begin{equation}
	\begin{split}
	\frac 1 2  \vert D^2\mathcal E(t_*,\lambda_*) ((g_d,k_d )^2)\vert 
	\leq \tilde D_1 \Vert \psi_d-\psi_* \Vert_\mathcal{H}^2 + \tilde  D_2 \Vert \psi_d-\psi_* \Vert_\mathcal{H}\Vert (e^{\Lambda_d}- e^{\Lambda_*})\phi_0 \Vert_\mathcal{H}.
	\end{split}
	\label{tildeEq1}
	\end{equation}
	Furthermore, we use
	\begin{equation*}
	\begin{split}
	e^{\Lambda_d} - e^{\Lambda_*} &= e^{T_*^\dagger} e^{-T_*^\dagger} (e^{\Lambda_d} - e^{\Lambda_*} ) \\
	&=e^{T_*^\dagger} \big(  e^{-T_d^\dagger}e^{\Lambda_d} - e^{-T_*^\dagger} e^{\Lambda_*} - (e^{-T_d^\dagger} - e^{-T_*^\dagger}) e^{\Lambda_d}   \big),
	\end{split}
	\end{equation*}
	and we obtain
	\begin{equation}
	\begin{split}
	\Vert (e^{\Lambda_d}- e^{\Lambda_*})\phi_0 \Vert_\mathcal{H}&\leq  \Vert e^{T_*^\dagger} \Vert_{\mathcal B (\mathcal H)} 
	\big(\Vert  \tilde \psi_d  - \tilde \psi_*  \Vert_{\mathcal H}    + \Vert e^{\Lambda_d} \Vert_{\mathcal B (\mathcal H)} \Vert (e^{-T_d^\dagger} - e^{-T_*^\dagger}) \phi_0 	 \Vert_{\mathcal H} \big) \\
	&  \leq  \tilde D  \Vert \tilde \psi_d  - \tilde \psi_*  \Vert_{\mathcal H} + \tilde D' \Vert \psi_d - \psi_* \Vert_{\mathcal H} .
	\end{split}
	\label{tildeEq2}
	\end{equation}
	Inserting \eqref{tildeEq2} into \eqref{tildeEq1}, gives
	\[
	\frac 1 2  \vert D^2\mathcal E(t_*,\lambda_*) ((g_d,k_d )^2)\vert \leq \tilde D_1' \Vert \psi_d -\psi_*\Vert_{\mathcal H}^2 + \tilde D_2' \Vert \psi_d -\psi_*\Vert_{\mathcal H}\Vert \tilde \psi_d -\tilde \psi_*\Vert_{\mathcal H}.
	\]
	Repeating the argument made in (i) for \eqref{Eest1}, we can find constants $\tilde d_1, \tilde d_2$ such that \eqref{Eest3} holds. 
	
	To finish the proof, we use \eqref{HtimesHd} in Lemma \ref{LemmaHtimesHd} that together with the proof of (i) give \eqref{Eest5}.
\end{proof}

\section{Conclusions}
\label{sec:conclusions}

In this article we have put the formalism of Arponen's ECC method on
firm mathematical ground. This has been achieved by generalizing the
continuous (infinite dimensional) formulation of standard CC theory in
Refs.~\cite{Rohwedder2013,Rohwedder2013b} to the ECC
formalism. The bivariational principle plays an important
role in our analysis. With the bivariational energy $\mathcal E (t,\lambda)$ (and its
derivatives) as the main object of study, we have derived existence
and uniqueness results for the extended CC equation $\mathcal F =0$
(the flipped gradient) and its discretizations $\mathcal F_d =0$. 
%We stress that the same
%truncation $\mathcal V_d$ is used for both the $t$- and
%$\lambda$-space, retaining the symmetry needed as we have flipped the
%components of $\nabla \mathcal E$ to obtain $\mathcal F$. 
The key
aspect of the analysis is the establishment of locally strong
monotonicity of $\mathcal F$ at the exact solution
$(t_*,\lambda_*)$. This has been achieved by either assuming that the
reference $\phi_0$ is sufficiently good an approximation of the exact
solution $\psi_*$, or by considering certain splittings of the
Hamiltonian $H$.

 We have formulated and proved
quadratic error estimates in terms of the quality of the truncated
amplitude space $\mathcal V_d$. The energy error has been bound in
terms of $d(\mathcal V_d,t_*)$ and $d(\mathcal V_d,\lambda_*)$, or
equivalently $d( \mathcal M_d,(\psi_*,\tilde{\psi}_*))$, where
$(\psi_*,\tilde{\psi}_*)$ is the exact wavefunction pair and
$\mathcal M_d$ the truncation of $\mathcal H \times \mathcal H$. 

It is interesting to note, as ECC is variational by construction,
i.e., the solution $(t_*,\lambda_*)$ is a critical point of the smooth
map $\mathcal{E}$, that the error estimate is obtained basically for
free. Indeed, the CC Lagrangian $\mathcal L$ can be thought of as a
linearized formulation of ECC where the second set of amplitudes
$\{\lambda_\mu\}$ are the Lagrange multipliers $\{z_\mu\}$. The dual
problem of CC is, as it were, already built into the ECC theory. This
again illustrates the benefit of applying the bivariational point of
view. 

Here, ECC has been formulated in a set of cluster amplitude coordinates that are
not usually employed. A next step in the study of the ECC method would
be to repeat the analysis of the monotonicity of $\mathcal{F}$ and to
obtain error estimates using the so-called canonical cluster
amplitudes, cf.~Remark~\ref{canonical-remark}.

Even if ECC is currently not a practical tool in computational
chemistry due to its complexity, our analysis demonstrates an
important fact: The bivariational principle can be utilized to devise
computational schemes that are not obtainable from the standard
Rayleigh--Ritz principle, but still have a quadratic error
estimate. Such schemes include both the traditional CC method
and the ECC method. Indeed, not being variational in the
Rayleigh--Ritz sense has been the single most important critique of the
coupled-cluster method, precisely due to the lack of a quadratic error
estimate. Moreover, we believe that the approach taken in this article, by
showing the monotonicity of the flipped gradient $\mathcal{F}$, is an
approach that may allow existence and uniqueness results in
much more general settings.

%\bibliographystyle{siamplain}
%\bibliography{ecc}

\begin{thebibliography}{10}
	\bibitem{Arponen1983}
	{\sc J.~Arponen}, {\em {Variational principles and linked-cluster exp S
			expansions for static and dynamic many-body problems}}, Annals of Physics,
	151 (1983), pp.~311--382.
	
	\bibitem{Bishop1991}
	{\sc R.~Bishop}, {\em An overview of coupled cluster theory and its
		applications in physics}, Theor. Chim. Acta, 80 (1991), pp.~95--148.
	
	\bibitem{Cizek1966}
	{\sc J.~{\v C}{\'\i}{\v z}ek}, {\em {On the Correlation Problem in Atomic and
			Molecular Systems. Calculation of Wavefunction Components in Ursell-Type
			Expansion Using Quantum-Field Theoretical Methods}}, J. Chem. Phys., 45
	(1966), pp.~4256--4266.
	
	\bibitem{Cizek1991}
	{\sc J.~{\v C}{\'\i}{\v z}ek}, {\em Origins of the coupled cluster technique
		for atoms and molecules}, Theor. Chim. Acta, 80 (1991), pp.~91--94.
	
	\bibitem{Coester1958}
	{\sc F.~Coester}, {\em Bound states of a many-particle system}, Nucl.~Phys., 7
	(1958), pp.~421--424.
	
	\bibitem{Coester1960}
	{\sc F.~Coester and H.~K\"ummel}, {\em Short-range correlations in nuclear wave
		functions}, Nucl.~Phys., 17 (1960), pp.~477--485.
	
	\bibitem{Dean2004a}
	{\sc D.~J. Dean and M.~Hjorth-Jensen}, {\em Coupled-cluster approach to nuclear
		physics}, Phys. Rev. C, 69 (2004), p.~054320.
	
	\bibitem{Emmrich2004}
	{\sc E.~Emmrich}, {\em {Gew{\"o}hnliche und Operator-Differentialgleichungen}},
	Vieweg, Wiesbaden, Germany, 2004.
	
	\bibitem{Helgaker1988}
	{\sc T.~Helgaker and P.~J{\o}rgensen}, {\em {Analytical Calculation of
			Geometrical Derivatives in Molecular Electronic Structure Theory}}, Adv.
	Quant. Chem., 19 (1988), pp.~183--245.
	
	\bibitem{Helgaker1989}
	{\sc T.~Helgaker and P.~J{\o}rgensen}, {\em Configuration-interaction energy
		derivatives in a fully variational formulation}, Theor. Chim. Acta, 75
	(1989), pp.~111--127.
	
	\bibitem{Kummel1991}
	{\sc H.~K{\"u}mmel}, {\em Origins of the coupled cluster method}, Theor. Chim.
	Acta, 80 (1991), pp.~81--89.
	
	\bibitem{Kvaal2012}
	{\sc S.~Kvaal}, {\em Ab initio quantum dynamics using coupled-cluster}, J.
	Chem. Phys., 136 (2012), p.~194109.
	
	\bibitem{Kvaal2013}
	{\sc S.~Kvaal}, {\em Variational formulations of the coupled-cluster method in
		quantum chemistry}, Mol. Phys., 111 (2013), pp.~1100--1108.
	
	\bibitem{Paldus2005}
	{\sc J.~Paldus}, {\em The beginnings of coupled-cluster theory: an eyewitness
		account}, in Theory and Applications of Computational Chemistry: The First
	Forty Years, C.~Dykstra, G.~Frenking, K.~Kim, and G.~Scuseria, eds.,
	Elsevier, 2005, ch.~7, p.~115.
	
	\bibitem{Paldus1972}
	{\sc J.~Paldus, J.~{\v C}{\'\i}{\v z}ek, and I.~Shavitt}, {\em {Correlation
			Problems in Atomic and Molecular Systems. IV. Extended Coupled-Pair
			Many-Electron Theory and Its Application to the BH3 Moleciule}}, Phys. Rev.
	A, 5 (1972), pp.~50--67.
	
	\bibitem{Rohwedder2013}
	{\sc T.~Rohwedder}, {\em {The continuous coupled cluster formulation for the
			electronic Schr{\"o}dinger equation}}, ESAIM: Math. Mod. Num. Anal., 47
	(2013), pp.~421--447.
	
	\bibitem{Rohwedder2013b}
	{\sc T.~Rohwedder and R.~Schneider}, {\em Error estimates for the coupled
		cluster method}, ESAIM: Math. Mod. Num. Anal., 47 (2013), pp.~1553--1582.
	
	\bibitem{Schneider2009}
	{\sc R.~Schneider}, {\em {Analysis of the projected Coupled Cluster Method in
			Electronic Structure Calculation}}, Numer. Math., 113 (2009), pp.~433--471.
	
	\bibitem{Sinanoglu1962}
	{\sc O.~Sinano\u{g}lu}, {\em {Many-Electron Theory of Atoms and Molecules. I.
			Shells, Electron Pairs vs. Many-Electron Correlations}}, J. Chem. Phys., 36
	(1962), pp.~706--717.
	
	\bibitem{Yserentant2010}
	{\sc H.~Yserentant}, {\em Regularity and approximability of electronic
		wavefunctions}, Lecture Notes In Mathematics, Springer, New York, Heidelberg,
	Berlin, 2010.
	
	\bibitem{Zarantonello1960}
	{\sc E.~Zarantonello}, {\em {Solving functional equations by contractive
			averaging}}, Tech. Report 160, U.S. Army Math. Res. Centre, Madison, WI.,
	1960.
	
	\bibitem{Zeidler1990}
	{\sc E.~Zeidler}, {\em {Nonlinear Functional Analysis and its Application
			II/B}}, Springer, New York, Heidelberg, Berlin, 1990.
\end{thebibliography}

\end{document}